\documentclass[a4paper,12pt]{article}
\date{}
\usepackage{mathrsfs}
\usepackage[latin1]{inputenc}
\usepackage[english]{babel}
\usepackage{mathrsfs}
\usepackage{amsmath}
\usepackage{amsfonts}
\usepackage{amsthm}

\usepackage{amssymb, graphicx, epsfig}

\usepackage{latexsym}

\newcommand{\radsumma}[2]{\genfrac{}{}{0pt}{}{#1}{#2}}
\newtheorem{thm}{Theorem}[section] 
\newtheorem{Lemma}{Lemma}[section]
\newtheorem*{ass}{(A1)}
\newtheorem*{ass2}{(A2)}
\newtheorem*{ass3}{(A3)}
\theoremstyle{definition}
\newtheorem{rem}{Remark}[section]

\author{Cecilia~Holmgren}
\title{A Weakly 1-Stable Limiting Distribution for the Number of Random Records and Cuttings in Split Trees
}

\begin{document}
\maketitle
\begin{abstract}
\emph{We study the number of random records in an arbitrary split tree (or equivalently, 
the number of random cuttings required to eliminate the tree). We show that a classical limit theorem for convergence of sums of triangular arrays to infinitely divisible distributions can be used to determine the distribution of this number. After normalization 
the distributions are shown to be asymptotically weakly 1-stable. This work is a generalization of our earlier results for the random binary search tree in \cite{holmgren}, which is one specific case of split trees. Other important examples of split trees include $m$-ary search trees, quadtrees, medians of $(2k+1)$-trees, simplex trees, tries and digital search trees.
}\end{abstract}

\section{Introduction}
\subsection {Preliminaries}

We study the number of records in random split trees which were introduced by Devroye \cite{devroye3}. As shown by Janson \cite{jan2}, this number is equivalent (in distribution) to the number of cuts needed to eliminate this type of tree.

Given a rooted tree $T$, let each vertex $v$  have a random value $\lambda{ _v}$
 attached to it, and assume that these values are i.i.d.\ with a continuous distribution. We say that the
 value $\lambda{ _v}$ is a \emph{record} if it is the smallest value in the path from the root to
 $v$. Let $X_v(T)$ denote the (random) number of records. Alternatively one may attach random variables to the edges and let $X_e(T)$ denote the number of edges with record values. 
Only the order relations of the $\lambda{ _v}$'s are important, so the distribution of $\lambda{ _v}$ does not matter, i.e., one can choose any continuous distribution for $\lambda _v$.

The same random variables 
 appear when we consider \emph{cuttings} of the tree 
$T$ as introduced by Meir and Moon \cite{Moon} with the following definition.
 Make a random cut by choosing one vertex [respectively edge] at random. Delete this vertex [respectively edge] so that the tree separates into several parts and keep only
the part containing the root. Continue recursively until the root is cut [respectively only the root is left]. Then the total (random) number of cuts made is $X_v(T)$ [respectively $X_e(T)$]. More precisely, cuttings and records give random variables with the same distribution. 
The proof of this equivalence uses a natural coupling argument as shown in \cite{jan2,jan1}.

In \cite{jan2} the asymptotic distributions for the number of cuts (or the number of records) are found for random trees that can be constructed
as conditioned Galton--Watson trees, e.g., labelled trees and random binary trees. There the proof relies on 
the fact that the method of moments can be used.

For the deterministic (non random) complete binary tree it is, however, not possible to use the method of moments. To deal with this
Janson \cite{jan1} introduced another strategy, which is to approximate $X_v(T)$ by a sum of independent random variables derived from $\lambda{ _v}$, and then apply a classical limit theorem for
triangular arrays, see e.g., \cite[Theorem 15.28]{kallenberg}. We recently showed that Janson's approach could also be applied to the random binary search tree  \cite{holmgren}. 
 
In this paper \emph{we consider all types of (random) split trees} defined by Devroye \cite{devroye3}; the binary search tree that we consider in \cite{holmgren} is one example of such trees. Some other important examples of split trees are $m$-ary search trees, quadtrees, median of $(2k+1)$-trees, simplex trees, tries and digital search trees. The split trees belong to the family of so-called 
$\log{n}$ trees, that are trees with height (maximal depth) $a.a.s.$\ $\mathcal O (\log{n})$. (For the notation $a.a.s.$\, see \cite{jan4}.)  These have similar properties to the deterministic complete binary tree with height $\lfloor\log_{2}{n}\rfloor$ considered in \cite{jan1}. In the complete binary tree (with high probability) most vertices are close to $\lfloor \log_2 n\rfloor$ (the height of the tree). In split trees on the other hand (with high probability) most vertices are close to depth $\sim c \ln n$, where $c$ is a constant (it is natural to use the $e$-logarithm); for the binary search tree that we investigated in \cite{holmgren} this depth is $\sim 2\ln n$ (e.g., \cite{devroye2}).
\emph{Here by the use of renewal theory we extend the methods used in \cite{holmgren} for the specific case of the binary search tree to show that also for split trees in general it is possible to apply a limit theorem, see e.g., \cite[Theorem 15.28]{kallenberg}, for convergence of sums of triangular arrays to infinitely divisible distributions to determine the asymptotic distribution of $X_v(T)$.}


\textbf{The split tree generating algorithm:}  

\begin{figure}[h]
\begin{center}
\includegraphics[scale=0.40]{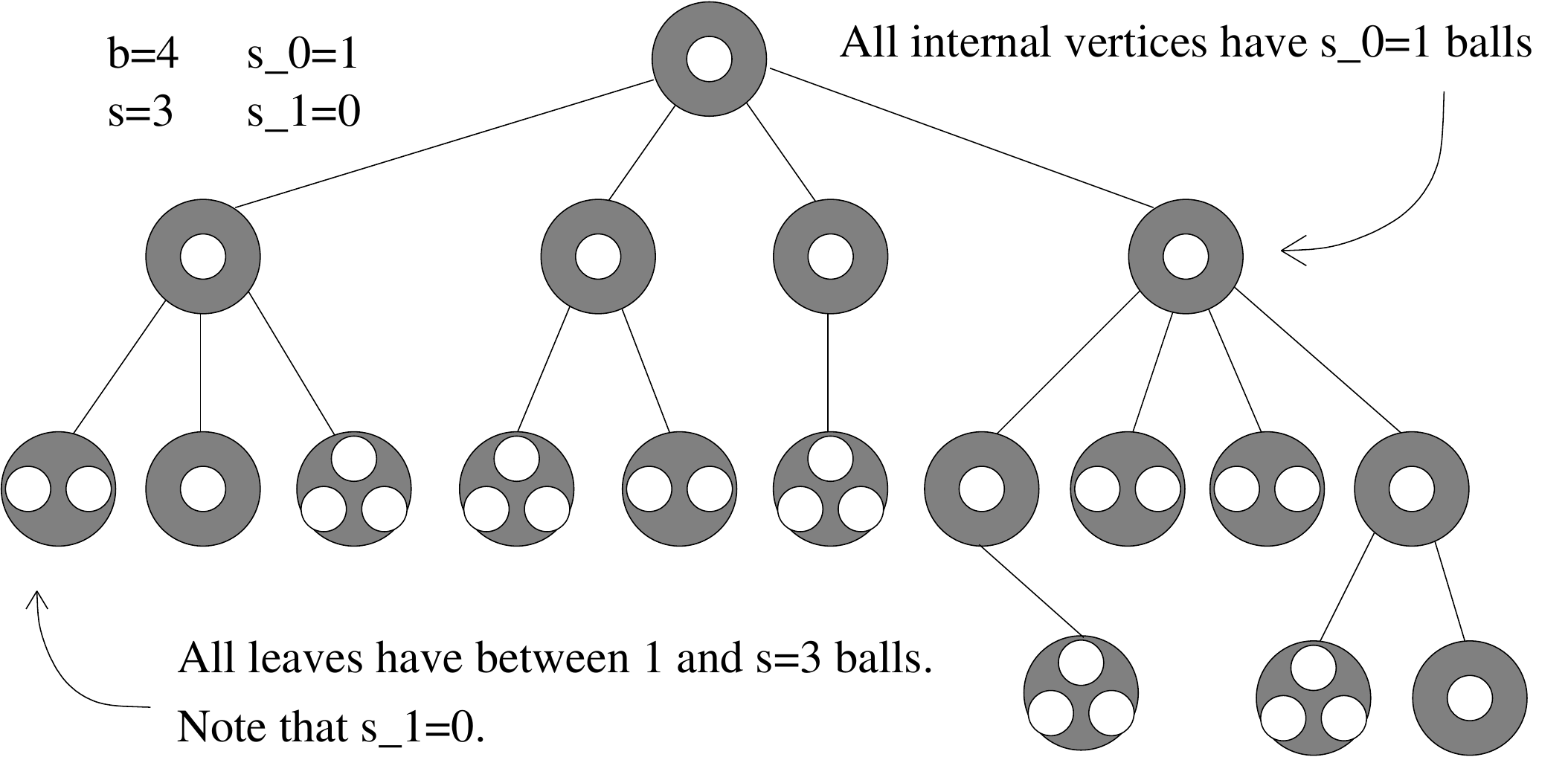}
\end{center}
\caption{\small{\textit{This figure illustrates a split tree with parameters
$b=4,~s=3,~s_0=1$ and $s_1=0$. }}}
\label{boll}
\end{figure}

\begin{figure}[h]
\begin{center}
\includegraphics[scale=0.40]{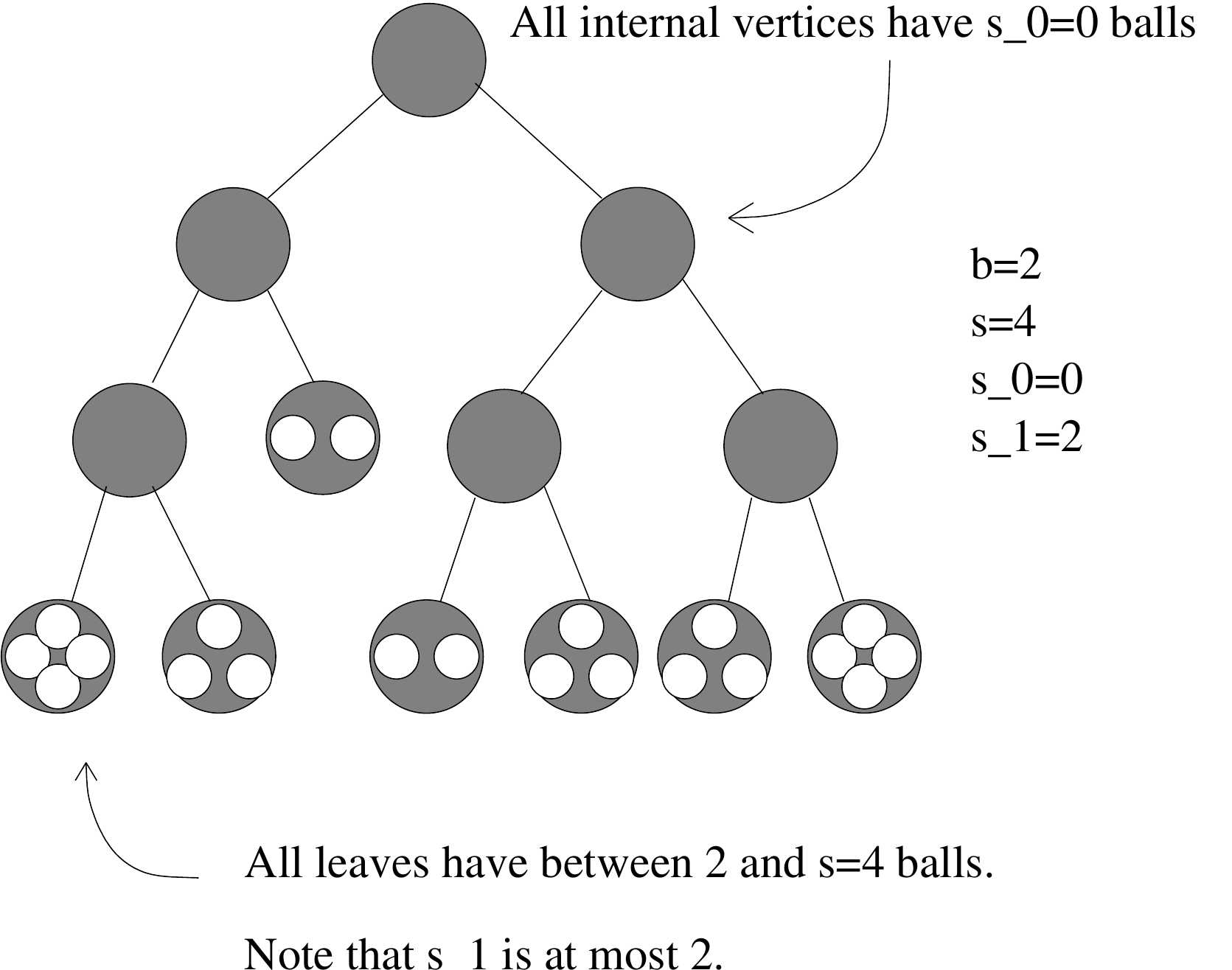}
\end{center}
\caption{\small{\textit{ This figure illustrates a split tree with parameters
$b=2,~s=4,~s_0=0$ and $s_1=2$.}}}
\label{boll2}
\end{figure}

The formal, comprehensive ``split tree generating algorithm'' is as follows with the following introductory notation, see \cite{devroye3} and \cite{holmgren2}.
A split tree is a finite subtree of a skeleton tree $S_b$ (i.e., an infinite rooted tree in which each vertex
 has exactly $b$ children that are numbered $1,2,\dots, b$).  The split tree is constructed recursively by distributing balls one at a time to generate a subset of vertices of $S_b$. We say that the tree has cardinality $n$, if $n$ balls are distributed. There is also a so-called vertex capacity, $s>0$, which means that each node can hold at most $s$ balls. Each vertex $v$ of $S_b$ is given an independent copy of the so-called random split vector $\mathcal{V}=(V_1,V_2 \dots ,V_b)$
 of probabilities, where $\sum_i V_i=1,~~V_i\geq 0$. There are also two other parameters: $s_0,s_1$ (related to the parameter $s$) that occur in the algorithm below; see Figure \ref{boll} and Figure \ref{boll2}, where two examples of split trees are illustrated.
Let $n_v$ denote the total number of balls that the vertices 
in the subtree rooted at vertex $v$ hold together, and $C_v$ be the number of balls that are held by $v$ itself. We say that a vertex $v$ is a leaf in a split tree if the node itself holds at least one ball but no descendants of $v$ hold any balls. An equivalent definition of a leaf is to say that $v$ is a leaf if and only if $C_v=n_v>0$. A vertex $v\in S_b$ is included in the split tree if, and only if, $n_v>0$; if  $n_v=0$, the vertex $v$ is not included and it is called useless. 

Below there is a description of the algorithm which determines how the $n$ balls are distributed over the vertices. 
Initially there are no balls, i.e., $C_v=0$ for each vertex $v$. 
Choose an independent copy  $\mathcal{V}_v$ of  $\mathcal{V}$ for every vertex $v \in S_b$. Add balls one by one to the root by the following recursive procedure for adding a ball to the subtree rooted at $v$.

\begin{enumerate}

\item\label{first} If $v$ is not a leaf, choose child $i$ with probability $V_i$
and recursively add the ball to the subtree rooted at child $i$, by the rules given in steps \ref{first}, \ref{second} and \ref{last}.

\item\label{second}  If $v$ is a leaf and $C_v=n_v<s$,  then add the ball to $v$ and stop. 
Thus, $C_v$ and $n_v$ increase by 1.

\item \label{last} If $v$ is a leaf and  $C_v=n_v=s$, the ball cannot be placed at $v$ since it 
is occupied by the maximal number of balls it can hold. In this case, let $n_v=s+1$ and $C_v=s_0$, by placing $s_0\leq s$ 
randomly chosen balls at $v$ and $s+1-s_0$ balls at its children. This is done by first giving $s_1$ randomly chosen balls
to each of the $b$ children. The remaining $s+1-s_0 -bs_1$ balls are placed by choosing a child for each ball independently 
according to the probability vector $\mathcal{V}_v=(V_1,V_2 \dots ,V_b)$, and then using the algorithm described in steps 
\ref{first}, \ref{second} and \ref{last} applied to the subtree rooted at the selected child.

\end{enumerate}

From \ref{last} it follows that the integers  $s_0$ and $s_1$  have to satisfy the inequality 
\begin{align*}0\leq s_0\leq s,~0\leq bs_1\leq s+1-s_0.
\end{align*}

We can assume that the components $V_i$ of the split vector $\mathcal{V}$ are identically 
distributed. If this were not the case they can anyway be made identically distributed by using a random permutation as explained in \cite{devroye3}. Let $V$ be a random variable with this distribution.
This gives (because $\sum_i V_i=1$) that $\mathbf{E}(V)=\frac{1}{b}$.
We use the notation $T^n$ to denote a split tree with $n$ balls. However, note that even given the fact that the split tree has $n$ balls, the number of nodes $N$, is still a random number.
The only parameters that are important in this work (and in general these parameters are the important ones for most results concerning split trees) are the cardinality $n$, the branch factor $b$ and the split vector $\mathcal{V}$; this is illustrated in Section \ref{Subtrees}.
In a binary search tree $b=2$, the split vector $\mathcal{V}=(V_1,V_2)$ is distributed as $(U,1-U)$ where $U$ is a uniform $U(0,1)$ random variable. For the binary search tree the number of balls $n$ is the same number as the number of vertices $N$; this is not true for split trees in general.

\subsection{Some Important Facts and Results for Split Trees}


 \subsubsection{Results Concerning Depth Analysis}\label{strong}

 In \cite[Theorem 1]{devroye3} Devroye presents a strong law and a central limit law for the depth $D_n$ of the last inserted ball in a split tree with $n$ balls and split vector $\mathcal{V}$. 
Recall that $V$ is distributed as the identically distributed components in the split vector. Let
\begin{align}\label{splitdef}\mu &:=b\mathbf{E}\Big(-V \ln({V})\Big),\nonumber\\
\sigma ^2&:=b\mathbf{E}\Big(V\ln^2 V\Big)-\mu ^2.
\end{align}
If $\mathbf{P}(V=1)=0$ and $\mathbf{P}(V=0)<1$, then
\begin{align}\label{splitdepth}\frac{D_n}{\ln n}\stackrel{d}\rightarrow \mu^{-1},
\end{align}and
\begin{align}\label{jul}\frac{\mathbf{E}(D_n)}{\ln n}\rightarrow \mu^{-1}.
\end{align}
Furthermore, if $\sigma >0$,  then \begin{align}\label{splitdepth:2}\frac{D_n-\mu^{-1} \ln n}{\sqrt{\sigma ^2\mu^{-3}\ln n}}\stackrel{d}\rightarrow N(0,1),
\end{align}
where $N(0,1)$ denotes the standard Normal distribution and $\stackrel{d}\rightarrow$ denotes convergence in distribution. Assuming that $\sigma >0$ is equivalent to  assuming that $V$ is not monoatomic, i.e., it is not the case that $V=\frac{1}{b}$.

Let $D_k$ be the depth of the $k$-th ball.
 In \cite[Theorem 2.3]{holmgren2} by using the same assumptions for $V$ as Devroye uses for proving  (\ref{splitdepth}) and (\ref{splitdepth:2}) we also show results concerning the variance of depths, i.e., for all $\frac{n}{\ln n}\leq k \leq  n$,
\begin{align*}\dfrac{\mathbf{Var}(D_k)}{\ln n}\rightarrow \sigma^2\mu^{-3}.
 \end{align*}

 \subsubsection{Results Concerning the Number of Nodes}\label{nodes}

\begin{ass}\label{assumption2}   In this work we assume as in Section \ref{strong} that $\mathbf{P}(V=1)=0$, and as in \cite{holmgren2} for simplicity we also assume  that $\mathbf{P}(V=0)=0$ and that $-\ln {V}$ is non-lattice. 
 \end{ass}
The non-lattice assumption we do since we use renewal theory for sums depending on the distribution of $-\ln {V}$, and in renewal theory it often becomes necessary to distinguish between lattice and non-lattice distributions. Tries and digital search trees are special forms of split trees with a random permutation of deterministic components $(p_1,p_2,\dots,p_b)$ and therefore not as random as many other examples. Of the common split trees only for some special cases of tries and digital search trees (e.g., the symmetric ones $p_1=p_2=\dots=p_b=\frac{1}{b}$) does $-\ln {V}$ have a lattice distribution.
By assuming that (A1) holds we show in \cite[Theorem 2.1]{holmgren2} that there is a constant $\alpha$ depending on the type of split tree such that for the random number of nodes $N$ we have that
\begin{align}\label{vertexball}\mathbf{E}(N)=\alpha n+o(n);
\end{align} and  
\begin{align}\label{vertexball2}\mathbf{Var}(N)=o(n^{2}).
\end{align}

Let $d(v)$ denote the depth of a node. In \cite[Theorem 2.2]{holmgren2} we show that the expected number of nodes in a tree with $n$ balls, where  $d(v)\leq \mu^{-1}\ln{n}-\ln^{0.5+\epsilon}{n}$ or $d(v)\geq \mu^{-1}\ln{n}+\ln^{0.5+\epsilon}{n}$, for some arbitrary $\epsilon>0$, is  $\mathcal O \Big(\frac {n}{\ln^{k}{n}}\Big)$, for any constant $k$. In this paper we use in particular that this is $O \Big(\frac {n}{\ln^{3}{n}}\Big)$. In \cite[Remark 4.3]{holmgren2} we also note that for any constant $r$ there is a constant $C>0$ such that the expected number of nodes with $d(v)\geq C \ln n$  is $\mathcal{O}\Big(\frac{1}{n^r}\Big)$, hence, we can bound the number of vertices with "large" depths with very small error terms.

 \subsubsection{Results Concerning the Total Path Length}\label{path}
In the present study we consider the ``total path length'' of a tree $T$ as the sum of all depths of the vertices in $T$ (distances to the root).  Since the split tree is a random tree the total path length is a random variable, which we denote by $\Upsilon(T)$.
However, a more natural definition of the total path length is probably the sum of all depths of balls in $T$, which we denote by $\Psi(T)$. 

From (\ref{jul}) it follows that  
 \begin{align}\label{exptotal1}
 \mathbf{E}\Big(\Psi{(T^{n})}\Big)=\mu^{-1} n\ln n+nq(n),
\end{align}
where
$q(n)=o(\ln n)$ is a function that depends on the type of split tree.
By using (\ref{jul}) and  (\ref{vertexball}) we easily show in \cite{holmgren2} that 
 \begin{align}\label{exptotal2}
 \mathbf{E}\Big(\Upsilon{(T^{n})}\Big)=\mu^{-1}\alpha n\ln n+nr(n),
\end{align}
where $\alpha$ is the constant that occurs in (\ref{vertexball}) and 
$r(n)=o(\ln n)$ is a function that depends on the type of split tree.

\begin{ass2}\label{christmas2}
Assume that the functions $q(n)$ in (\ref{exptotal1}) converges to some  constant $\varsigma$.
\end{ass2} 
In \cite{Neininger} there is an analogous assumption.
Examples of split trees where it is shown that $q(n)$ converges to a constant  are binary search trees (e.g.\ \cite{jan3}), random $m$-ary search trees \cite{Mahmoud}, quad trees \cite{Neininger} and the random median of a $(2k+1)$-tree \cite{Roesler}, tries and Patricia tries \cite{bourdon}. 

\begin{ass3}\label{assumption,1} Assume that the result  in (\ref{vertexball}) can be improved 
such that 
\begin{align*}\mathbf{E}(N)=\alpha n+f(n),
\end{align*} where $f(n)=\mathcal{O}\Big(\frac{n}{\ln^{1+\epsilon} n}\Big)$.\end{ass3} 
 Stronger second order terms of the size have previously been shown to hold e.g., for $m$-ary search trees \cite{Mahmoud2}, for these $f(n)$ in assumption (A3) is $o(\sqrt{n})$ when $m\leq 26$ and is
$\mathcal{O}\Big(n^{1-\epsilon}\Big)$ when $m\geq 27$.  Further, as described in Section \ref{nodes} tries are special cases of split trees which are not as random as other types of split trees. Flajolet and Vall\'ee (personal communication) have recently shown that also for most tries (as long as $-\ln V$ is not too close to being lattice) assumption (A3) holds.

In \cite[Theorem 5.1]{holmgren2} by assuming (A2) and (A3) we show that 
$r(n)$ in (\ref{exptotal2}) converges to some constant $\zeta$.
In \cite[Theorem 5.2]{holmgren2} by applying \cite[Theorem 5.1]{holmgren2} we show the following result, which we will apply in the proof of the main theorem below:
 Let  $L=\lfloor \beta \log_b\ln{n}\rfloor$ for some large constant $\beta$, then
\begin{align}\label{TPL}&\sum_{i=1}^{b^L}\frac {\Upsilon{(T_i)}}{\mu^{-2}{\ln^2{n_i}}}=\sum_{i=1}^{b^L}
\frac{\alpha n_i}{\mu^{-1}\ln{n_i}}+\frac{ n \zeta}{\mu^{-2}\ln^2{n}} +o_p\Big(\frac {n}{{\ln^2{n}}}\Big),\end{align} where $\zeta$ is the constant  that $r(n)$ converges to.

\subsection{The Main Theorem}

The main theorem of this study is presented below:

\begin{thm}\label{thm} Let $n\rightarrow\infty$, and suppose that assumptions (A1)--(A3) hold. Then
\begin{align}\label{sats}\left(X_v(T^n)-C_n \right)
\Bigm/\frac{\alpha n}{\mu^{-2}\ln^2{ n}} \stackrel{d}{\rightarrow }-W,\end{align} where \begin{align}C_n:=\frac{\alpha n}{\mu^{-1}\ln{ n}} +\frac{\alpha n\ln\ln{ n}}{\mu^{-1}\ln^2{ n}}-
\frac{\zeta n}{\mu^{-1}\ln^2{ n}},\end{align} for the constant $\zeta$ in (\ref{TPL}), and where $W$ has an infinitely 
divisible distribution. More precisely $W$ has a weakly 1-stable distribution, 
with characteristic function \begin{align}\label{mainthm3}
 \mathbf{E}\Big(e^{itW}\Big)=\exp\Big(-\frac{\mu^{-1}}{2 }\pi|t|+it\Big(C-\mu^{-1}\ln{|t|}\Big)\Big),
\end{align}
where $\mu $ is the constant in (\ref{splitdef}) and $\alpha$ is the constant in (\ref{vertexball}) and $C$ is a constant which is defined in (\ref{amy}) below. The same result holds for $X_e(T^n)$.\end{thm}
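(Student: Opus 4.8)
The plan is to follow the strategy of Janson \cite{jan1} and our earlier work on the binary search tree \cite{holmgren}, namely to approximate $X_v(T^n)$ by a sum of independent random variables and then apply the classical limit theorem for triangular arrays \cite[Theorem 15.28]{kallenberg}. First I would recall the coupling that turns the record problem into a sum. Attach to each vertex $v$ an exponential (or uniform) clock $\lambda_v$; then $v$ is a record precisely when $\lambda_v = \min_{u \preceq v} \lambda_u$ along the root path. Writing $X_v(T^n) = \sum_v \mathbf{1}[v \text{ is a record}]$, the key observation is that, conditionally on the tree shape, the indicator that $v$ is a record has probability $1/(d(v)+1)$, and records in different subtrees become asymptotically independent. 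The first main step is therefore to cut the tree at level $L = \lfloor \beta \log_b \ln n \rfloor$ and write $X_v(T^n)$ as the contribution from the top $L$ levels plus $\sum_{i=1}^{b^L} X_v(T_i)$ over the $b^L$ subtrees hanging below level $L$; the top part is negligible after normalization (it has $O(b^L) = O(\mathrm{polylog}\, n)$ vertices), and the subtrees are independent given their cardinalities $n_i$.

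The second step is to compute, for a single subtree $T_i$ of cardinality $n_i$, the conditional mean and the characteristic function of $X_v(T_i)$ up to the needed order. Here renewal theory enters: the depth of a vertex chosen as in the split-tree construction is governed by a renewal process with step distribution $-\ln V$, so $\mathbf{E}(X_v(T_i) \mid n_i)$ is $\mu^{-1} n_i / \ln n_i$ corrected by lower-order terms involving $\ln\ln n_i$ and the total-path-length constant — this is exactly where (\ref{exptotal2}), (\ref{TPL}) and assumptions (A2)–(A3) are used to control the $o(\ln n)$ slack in $r(n)$ and the $f(n)$ slack in $\mathbf{E}(N)$, so that summing over the $b^L$ subtrees produces the stated centering constant $C_n$ with an error that is $o_p(n/\ln^2 n)$. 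For the fluctuations, one shows that after subtracting the conditional mean, $\sum_i (X_v(T_i) - \mathbf{E}(X_v(T_i)\mid n_i))$ is (asymptotically) a sum of independent, individually negligible summands whose aggregate distribution converges. Using the record-probability structure, the relevant triangular array is built from terms of order $n_i/(\mu^{-1}\ln n_i)$, and one checks the two hypotheses of \cite[Theorem 15.28]{kallenberg}: (i) the uniform asymptotic negligibility (each summand, divided by $\alpha n/(\mu^{-2}\ln^2 n)$, tends to $0$ in probability — this follows since $\ln n_i \sim \ln n$ and $\sum n_i \le n$, while the tree-depth concentration from (\ref{splitdepth:2}) and the node-count bounds in Section \ref{nodes} keep each $n_i$ from being too large); and (ii) convergence of the truncated sum of variances and of the Lévy measure to those of the limit. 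The limiting Lévy measure has a density proportional to $x^{-2}$ on the positive half-line (the signature of a one-sided $1$-stable law), which yields the characteristic exponent $-\tfrac{\mu^{-1}}{2}\pi|t| + it(C - \mu^{-1}\ln|t|)$ after the standard computation of $\int (e^{itx} - 1 - itx\mathbf{1}[|x|\le 1])\,x^{-2}\,dx$; the logarithmic drift term $-\mu^{-1}\ln|t|$ and the constant $C$ in (\ref{amy}) come from reconciling the truncation level with the centering $C_n$.

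The third, more technical, step is the justification that the dependence among the subtrees $T_i$ and among records within a single subtree can be replaced by genuine independence without affecting the limit. For records within one subtree this is Janson's second-moment argument adapted to $\log n$ trees: one shows $\mathbf{Var}(X_v(T_i)\mid n_i) = O(n_i^2/\ln^3 n_i)$ or similar, small enough that the within-subtree correlations wash out; this uses the depth-variance estimate $\mathbf{Var}(D_k)/\ln n \to \sigma^2\mu^{-3}$ and the tail bounds on node depths from Section \ref{nodes}. For the cross-subtree terms one notes that, given the cardinalities, the records in distinct $T_i$ are conditionally independent by construction (different vertices, independent $\lambda_v$'s), so the only coupling is through the random vector $(n_1,\dots,n_{b^L})$; one then shows this vector concentrates (each $n_i \approx n V_{(1)}\cdots V_{(L)}$ along its path) tightly enough that the conditional characteristic function, evaluated at the random $n_i$'s, converges to the deterministic limit. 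Finally one verifies the same argument for the edge-version $X_e(T^n)$, which differs from $X_v(T^n)$ by at most the number of leaves, an $O(n)$ quantity but — crucially — one whose contribution to the normalized difference is $o_p(1)$ because it only shifts the centering, or more precisely because $X_v$ and $X_e$ differ by a term absorbed into $C_n$; this is handled exactly as in \cite{jan1,holmgren}.

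I expect the main obstacle to be the precise bookkeeping of lower-order terms in the centering constant $C_n$: one must track the $\mu^{-1}n\ln\ln n/\ln^2 n$ correction and the $\zeta n/\ln^2 n$ term through the decomposition into $b^L$ subtrees, using (\ref{TPL}) to replace $\sum_i \alpha n_i/(\mu^{-1}\ln n_i)$ by its value in terms of the global $n$, and simultaneously ensuring that the truncation threshold used in the triangular-array theorem is compatible with this centering so that no spurious drift is introduced. This is the step where all three assumptions (A1)–(A3) are genuinely needed — (A1) to run renewal theory for $-\ln V$, (A2) to know $q(n)\to\varsigma$, and (A3) to get a sharp enough $\mathbf{E}(N)$ so the summed errors are $o(n/\ln^2 n)$ — and where the bulk of the calculation lies; by contrast the identification of the limit law and its characteristic function is a routine (if delicate) Lévy–Khinchine computation once the $x^{-2}$ Lévy measure has been pinned down.
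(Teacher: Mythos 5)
Your overall strategy is right in broad outline — cut at depth $L=\lfloor\beta\log_b\ln n\rfloor$, approximate, and invoke \cite[Theorem 15.28]{kallenberg} — but you have misidentified where the limiting fluctuations come from and, as a consequence, you build the wrong triangular array. You propose to take the array indexed by the subtrees $T_i$, with summands $X_v(T_i)-\mathbf{E}(X_v(T_i)\mid n_i)$, and you assert that ``given the cardinalities, the records in distinct $T_i$ are conditionally independent.'' That is false: whether a vertex $v\in T_i$ is a record for the full tree depends on $\Lambda_i$, the minimum label on the path from the root down to $v_i$, and the $\Lambda_i$ for different $i$ share the top-level labels. So the $X_v(T_i)$ are genuinely dependent given $\Omega_L=\sigma(n_v:d(v)\le L)$, and Kallenberg's theorem does not apply to your array.

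What the paper actually does is different in a way that matters. One first shows (Lemmas \ref{lem1}--\ref{lem3}) that the within-subtree record fluctuations are negligible: $\mathbf{Var}(X(T_i)_{\Lambda_i}\mid T_i,\Lambda_i)=\mathcal O(n_i^2/\ln^3 n_i)$, so $X(T^n)$ can be replaced by $\sum_i\varphi(T_i,\Lambda_i)=\sum_i\mathbf{E}(X(T_i)_{\Lambda_i}\mid T_i,\Lambda_i)$ up to $o_p(n/\ln^2 n)$. The only remaining randomness (beyond $\Omega_L$) sits in the $\Lambda_i$, via the term $-\frac{N_i}{\mu^{-1}\ln n_i}e^{-(\mu^{-1}\ln n_i)\Lambda_i}$. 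Lemma \ref{lem4} then converts $\sum_i n_i e^{-m\Lambda_i}$ (a maximum along each path) into $\sum_{d(v)\le L}n_v e^{-m\lambda_v}$ (a sum over the top $L$ levels), using that with high probability at most one or two labels on each path are small enough to matter. Only now does one get a triangular array: $\xi_v=\frac{mn_v}{n}e^{-m\lambda_v}$, indexed by $v$ with $d(v)\le L$, and \emph{these} are conditionally independent given $\Omega_L$ because each $\xi_v$ involves its own $\lambda_v$. This is exactly the content of Remark \ref{rem1}: the fluctuations driving the $1$-stable limit live at depths near $\ln\ln n$, not inside the subtrees. Your sketch keeps the fluctuations inside the subtrees and has no analogue of Lemma \ref{lem4}, so the independence you need is not available and the argument cannot close as written. (The renewal-theory ingredients you cite — $U(t)\sim\mu^{-1}e^t$ from (\ref{renewal equation3}), the path-length and node-count expansions under (A1)--(A3) — are indeed the right tools, but they are deployed to evaluate the Lévy measure, centering, and truncated variance of the $\xi_v$ array via Lemmas \ref{lem5}--\ref{lem8}, not to control $X_v(T_i)-\mathbf{E}(X_v(T_i)\mid n_i)$ directly.)
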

\begin{rem}
Even if we only have $\mathbf{E}(N)=\alpha n+o(n)$ as in (\ref{vertexball}) (i.e., ignoring the assumptions  (A2)--(A3) the normalized $X_v(T^n)$ (or $X_e(T^n)$) ought to still converge to a weakly 1-stable distribution with characteristic function as in (\ref{mainthm3}) for some constant $C$. However, in this case $C_n$ in (\ref{sats}) ought to be \begin{multline*}C_n:=2\frac{\mathbf{E}(N)}{\mu^{-1}\ln{ n}}-2\mathbf{E}\bigg(\sum_{d(v)=L}\frac{\mathbf{E}(N_v|n_{v})\ln(\frac{n_{v}}{n})}{\mu^{-1}\ln^2{n}}\bigg)-
\frac{\mathbf{E}(N)L}{\mu^{-1}\ln^2{ n}}\\+\frac{\alpha n\ln\ln{ n}}{\mu^{-1}\ln^2{ n}}-\mathbf{E}\bigg(\sum_{i=1}^{b^L}\frac{\Upsilon(T_i)}{\mu^{-2}{\ln^2{n_i}}}\Big|n_i\bigg),\end{multline*}
where $\Upsilon(T_i)$ is the total path length for the  nodes of the subtrees $T_i$ rooted at depth $L$.
\end{rem}

The class of $\alpha$-stable distributions are included in the larger class of infinitely divisible distributions.
The general formula for the characteristic function of an infinitely divisible distribution is 
\begin{align}\label{characteristic}\exp\left(itb-\frac{a^2t}{2}
+\int_{-\infty}^{\infty}(e^{itx}-1-itx\mathbf{1}[|x|<1])d\nu(x)\right),
\end{align}
for constants $a\geq 0$, $b\in \mathbb R $ and $\nu$ is the so called L\'evy measure. 
The characteristic function in (\ref{characteristic}) of a $1$-stable distribution (i.e., $\alpha=1$) can be simplified to
\begin{align*}\exp\left(i d t-c|t|\Big(1+i\beta \frac{2}{\pi }\mathrm{sign}(t)\ln|t|\Big)\right),
\end{align*}
for constants $c>0$, $\beta\in[-1,1]$ and $d\in \mathbb R $. If the L\'evy measure $\nu$ in (\ref{characteristic}) satisfies  $\frac{d\nu}{dx}=\frac{c_{\pm} }{|x|^{\alpha+1}}$ on $\mathbb R_\pm $, for 
$\alpha\in (0,2)$ and constants $c_{\pm}$ the corresponding infinitely divisible distribution is weakly $\alpha$-stable.
The most well-known 1-stable distribution is the Cauchy distribution. 
However, in contrast to the distribution of $W$  in Theorem \ref{thm} (which is weakly 1-stable), the Cauchy distribution is strictly 1-stable and symmetric.  
The random variable $W$ in Theorem \ref{thm} has support on $(-\infty,\infty)$, and has a heavy tailed distribution. 
As for other random variables with $\alpha$-stable distributions where $\alpha<2$ the  variance of $W$ is infinite. Also since $\alpha\leq 1$ the expected value of $W$ is not defined. 
  For further information about stable distributions, see e.g.,  \cite[Section XVII.3]{feller}.

\begin{rem}\label{rem3}\rm
In the proof of Theorem \ref{thm} we get 
\begin{align}\label{mainthm2} &\mathbf{E}\Big(e^{itW}\Big)=\exp\left(it\Big(C+\mu^{-1}(\gamma -1)\Big)+
\int_{0}^{\infty}(e^{itx}-1-itx\mathbf{1}[x<1])d\nu(x)\right),
\end{align} where $C$ is the constant in (\ref{mainthm3}), $\gamma$ is the Euler constant
 and the L\'evy measure $\nu$ is supported 
on $(0,\infty)$ and has density \[\frac{d\nu}{dx}=\frac{\mu ^{-1}}{x^2}.\] 
Thus,  we see that $W$ has a weakly 1-stable distribution.
The constant $C$ can be expressed as 
\begin{align}\label{amy}C=-\mu^{-1}\ln\mu^{-1}+2\mu^{-1}-\mu^{-2}\sigma^2-\mu^{-1}\gamma-\frac{\sigma^2-\mu^{2}}{2\mu^{2}}, \end{align}
 where $\mu $ and $\sigma ^2$  are the constants in (\ref{splitdef}). We can simplify the expression  in 
(\ref{mainthm2})  to get (\ref{mainthm3}) above. \end{rem}

\begin{rem}\label{rem1} We note in analogy with \cite{jan1} and \cite{holmgren} that most records occur close to the depth where most vertices are, i.e., $\sim \mu ^{-1}\ln{n}$ for split trees. 
Also in analogy with \cite{jan1} and \cite{holmgren}, from Lemma \ref{lem4} and the proof of Theorem \ref{thm3} it follows that most of the random fluctuations of $X_v(T^n)$ can be explained
by the values at depths close to $\ln{\ln{n}}$. \end{rem}

\begin{rem}\label{rem2} 
For random trees \cite{holmgren}, $\mathbf{E}(X_e(T))$=$\mathbf{E}\Big(\sum_{v\neq\sigma }\frac{1}{d(v)}\Big)$ (where $\sigma$  is the root) and $\mathbf{E}(X_v(T))=\mathbf{E}\Big(\sum_{v}\frac{1}{d(v)+1}\Big)$. 
Thus, as we noted for the specific case of the binary search tree \cite[Remark 1.3]{holmgren} also for all other split trees 
\begin{align*}\mathbf{E}(X_e(T^n))-\mathbf{E}(X_v(T^n))=\mathbf{E}\Big(\sum_{v\neq\sigma }\frac{1}{d(v)(d(v)+1)}\Big)-1
\sim C_1 \frac{\alpha n}{\log^2{n}},\end{align*} for some constant $C_1>0$, while there is no similar difference in the limit distribution, see Theorem 1.1 above. 
 As in \cite{holmgren}, this behaviour suggests that it is impossible to use
the method of moments to find the record distribution for split trees as one could do for the conditioned Galton-Watson trees in \cite{jan2}. In \cite{holmgren} we instead  used methods similar to those that Janson used for the complete binary tree in \cite{jan1}. In this paper we generalize the proofs in \cite{holmgren} to consider general split trees.
\end{rem}

\begin{rem}
 Most likely the method that is used here should work for other trees of logarithmic height as well, 
and thus the limiting distribution for these trees should also be infinitely divisible and probably also weakly 1-stable. 
This turns out to be the case for the random recursive tree (that is a logarithmic tree), where the limiting distribution of $X_e(T)$ was recently found to be weakly 1-stable, see \cite[Theorem 1.1]{drmota3} and \cite[Theorem 1.1]{iksanov}. However, the methods used for the recursive tree in \cite{drmota3,iksanov} differ completely from our methods. 
 The advantage with studying split trees compared to the whole class of $\log n$ trees is that there is a common definition that describe all split trees and this is the reason why we only consider these trees in this paper. 
\end{rem}

\subsection{Renewal theory applications for studies of split trees}
\subsubsection{Subtrees}\label{Subtrees}
For the split tree  where the number of balls $n>s$, there are $s_0$ balls in the root and the cardinalities of the 
$b$ subtrees are distributed as $(s_1,\dots, s_1)$ plus a multinomial vector $(n-s_0-bs_1,V_1,\dots, V_b)$.
Thus, conditioning on the random $\mathcal{V}$ -vector that belongs to the root, the subtrees rooted at the children have cardinalities close to $nV_1,\dots, nV_b$.
This is often used in applications of random binary search trees. In particular we used this frequently in \cite{holmgren}.

Conditioning on the split vectors, $n_v$ at depth $d$, is in the stochastic sense bounded from above by
\begin{align}\label{binomial} n_v &\leq \mathrm{Binomial}(n,\prod_{r=1}^{d}W_{r,v})+s_1 d,
\end{align}
and bounded from below by 
\begin{align}\label{binomial,1} n_v &>\mathrm{Binomial}(n,\prod_{r=1}^{d}W_{r,v})-s d,
\end{align}
where $W_{r,v},r\in\{1,\dots, d\}$, are i.i.d.\ random variables given by the split vectors associated with the nodes in the unique path from $v$ to the root, see \cite{devroye3} and \cite{holmgren2}. This means in particular that $W_{r,v}\stackrel{d}{=}V$.
An application of the Chebyshev inequality gives that $n_v$ for $v$ at depth $d$ is close to
\begin{align}\label{sub} M_v^n:=nW_{1,v}W_{2,v} \dots W_{d,v},
\end{align} 
see \cite{holmgren2}.
Since the $n_v$'s (conditioned on the split vectors) for all $v$ at the same depth are identically distributed, we sometimes skip the vertex index of $W_{r,v}$ in (\ref{binomial}) and just write $W_r$.

\subsubsection{Results Obtained by Using Renewal Theory}\label{renewaltheory}

In \cite{holmgren2} we introduce renewal theory in the context of split trees, and in this study we use this theory frequently for the proof of the Main Theorem, i.e., Theorem \ref{thm} below.

For each vertex $v$, where $W_{r,v}\stackrel{d}{=}V$ are the i.i.d.\ random variables defined in Section \ref{Subtrees},
 let $Y_{k,v}:=-\sum_{r=1}^{k}\ln {W_{r,v}}$. Below we skip the vertex index and just write $Y_k$, 
since for vertices $v$ on the same level $k$ the $Y_{k,v}$'s are identically distributed. 
This is the corresponding notation, as the one we use in \cite{holmgren} for the specific case of the binary search tree, where we define $Y_k:=-\sum_{r=1}^{k}\ln U_r$, where  $U_r$ are uniform $U(0,1)$ random variables.
Recall from (\ref{sub}) in Section \ref{Subtrees}  that the subtree size $n_v$ for a vertex $v$ at depth $k$ is close to $M_v^n$ and note that \begin{align*}M_v^n:=nW_{1,v}W_{2,v} \dots W_{k,v}=ne^{-Y_k}.\end{align*}
Recall that in a binary search tree, the split vector is distributed as $(U,1-U)$ where $U$ is uniform $U(0,1)$ random variable. 
For the binary search tree, the sum $\sum_{r=1}^{k}\ln U_r$ is distributed as a $-\Gamma(k,1)$ random variable. 
For general split trees we do not know the common distribution function of $Y_k$, instead we use renewal theory. (For an introduction to renewal theory, see e.g., \cite[Chapter II]{gut2} or \cite{asmussen}.)
We define the renewal function 
\begin{align}\label{renewal function}
 U(t)=\sum_{k=1}^{\infty}b^k\mathbf{P}(Y_k\leq t)=\sum_{k=1}^{\infty}F_k,
\end{align}
and also denote $F(t):=F_1(t)=b\mathbf{P}(-\ln {W_{r,v}}\leq t)$, which in contrast to standard renewal theory is not a probability measure.
For $U(t)$ we obtain the following renewal equation
\begin{align*}
 U(t)&=F(t) +\sum_{k=1}^{\infty}(F_k\ast F)(t)=F(t)+(U\ast F)(t).
\end{align*}

Recall the definitions of the constants $\mu$ and $\sigma$ in (\ref{splitdef}).
In \cite[Lemma 3.1] {holmgren2} we show the following result which is fundamental for the proof of Theorem \ref{thm}.
Let $t\rightarrow\infty$, then the renewal function $U(t)$
in (\ref{renewal function}) has the solution
\begin{align}\label{renewal equation3}
 U(t)=(\mu^{-1}+o(1))e^t.
\end{align}
In \cite{holmgren2} we also define 
\begin{align*}W(x):=\int_{0}^{x}e^{-t}(U(t)-\mu^{-1}e^t)dt,
\end{align*}
and in \cite[Corollary 3.2]{holmgren2}, we show that
\begin{align} \label{V}W(x)=\frac{\sigma^2-\mu^2}{2\mu^2}-\mu^{-1}+o(1),~~\mathrm{as}~x\rightarrow \infty.
\end{align}

\section{Proofs}\label{Proofs}
\subsection{Notation}\label{notation}
Most of our notation are similar to the ones that we use in \cite{holmgren}, where the binary search tree is considered.

We use the notation $\log_b$ for the $b$-logarithm (recall that a split tree with parameter $b$ is a $b$-ary tree) and $\ln$ for the $e$-logarithm. Let
 $\{x\}=x-\lfloor x \rfloor$ be the fractional part of a real number $x$. 
We treat the case $X_v(T^n)$ in Theorem \ref{thm} in detail and then indicate why the same result holds for $X_e(T^n)$ too.
From now on since it is clear that we consider the vertex model we just write $X(T^n)$. 
First let $X(T^n)_y$ be $X(T^n)-1$ conditioned on the root label $\lambda{ _\sigma }=y$.

We write $\stackrel{d}=$ for equality in distribution.

We say that, $Y_n=o_p(a_n)$ if $a_n$ is a positive number and 
$Y_n$ is a random variable such that $Y_n/a_n \stackrel{p}{\rightarrow}0$ as $n\rightarrow \infty$. 

We say that, $Y_n=\mathcal O _{L^p}(a_n)$ if $a_n$ is a positive number and $Y_n$ is a random variable such that 
 $(\mathbf{E}({Y_n}^p))^{\frac{1}{p}}\leq Ca_n$ for some constant $C$. 
 
We sometimes use the notation $m=\mu ^{-1}\ln{n}$.
 For simplicity in the proofs below we write $\ln{n}$ when we mean $\max \{1,\ln{n}\}$.
 
In the sequel we write $T$ instead of $T^n$. 

For a vertex $v\in T$, we let $T_v$ be the subtree of $T$ rooted at $v$. Recall that $n_v$ is the number of balls and similarly
let $N_v$ be the number of nodes in $T_v$.  

We write Exp$(\theta )$ for an exponential  distribution with parameter $\theta$, i.e., the density function $f(x)=\frac{e^{-\frac{x}{\theta }}}{\theta }$.
We can without loss of generality assume that the labels $\lambda{ _v}$ 
have an exponential  distribution Exp$(1)$. As mentioned above this does not affect the distribution of $X(T^n)$.

Let $d(v)$ denote the depth of $v$, i.e., distance to the root.

Recall that $V$ is a random variable distributed as the identically distributed components in the 
split vector $\mathcal{V}=(V_1,\dots, V_b)$. Also recall that for each vertex $v$ we 
let $Y_{k,v}:=-\sum_{r=1}^{k}\ln {W_{r,v}}$,
 where $W_{r,v}\stackrel{d}{=}V$ are the i.i.d.\ random variables defined in Section \ref{Subtrees}.
  Since the $Y_{k,v}$'s are identically distributed for vertices at the same depth (or depth), 
we sometimes skip the vertex index and just write $Y_k$. 
Recall from (\ref{renewal function}) that we define the renewal function 
 $U(t):=\sum_{k=1}^{\infty}b^k\mathbf{P}(Y_k\leq t)$.

 Let $\Lambda_{v_i}$ be the minimum of $\lambda _v$ along the path 
$P(v_i)=\sigma,\ldots,v_i$, from 
the root $\sigma $ of $T$ to $v_i,$ $1\leq i \leq b^L$, where $v_i$ are the vertices at 
depth $L=\lfloor \beta \log_b\ln{n}\rfloor$ for some constant $\beta$.  
Thus, the definition of $\Lambda_{v_i}$ and the assumption $\lambda _v\stackrel{d}=$  
Exp$(1)$ give $\Lambda_{v_i}\stackrel{d}=$  Exp$(\frac{1}{L+1})$.  

For simplicity we write $T_i:=T_{v_i},$ $n_i:=n_{v_i}$, $N_i:=N_{v_i}$ and $\Lambda_i:=\Lambda_{v_i}$. We also let  ${T_i}_v$ denote a subtree of $T_i$ rooted at $v$ (note that ${T_i}_v$ is $T_v$ for $v\in T_i$).  Let ${n_i}_v$ denote the number of balls in ${T_i}_v$. 

We write $d_i(v):=d(v)-L$ (i.e., the depth in the subtree $T_i,~i\in\{1,\ldots,b^L\}$, of a vertex $v\in T_i$).
  
 We say that a vertex $v$ in $T^n$ is ''good''  if 
\begin{align*}\mu^{-1}\ln{n}-\ln^{0.6}{n}\leq d(v)\leq \mu^{-1}\ln{n}+\ln^{0.6}{n},
\end{align*}
and  otherwise it is bad.
  In particular a vertex $v\in T_i$ is ''good''  if 
\begin{align}\label{good}\mu^{-1}\ln{n_i}-\ln^{0.6}{n_i}\leq d_i(v)\leq \mu^{-1}\ln{n_i}+\ln^{0.6}{n_i},
\end{align}
and  otherwise it is bad.

We define $\varphi(T_i,\Lambda_i):=\mathbf{E}(X(T_i)_{\Lambda_i}\mid T_i,\Lambda_i)$ (the conditional expected value of $X(T_i)_{\Lambda_i}$ 
given the tree $T_i$ and $\Lambda_i$). (We can think of $X(T_i)_{\Lambda_i}$ as $X(T_{i})-1$ conditioned on the root 
label $\lambda _{v_i }=\Lambda_i$.) Similarly we let $\psi(T_i,\Lambda_i):=\mathbf{Var}(X(T_i)_{\Lambda_i}\mid T_i,\Lambda_i)$ for vertices  $v_i,~1\leq i \leq b^L$ (the conditional variance of $X(T_i)_{\Lambda_i}$ given the tree $T_i$ and $\Lambda_i$). 

The conditional expected value of a random variable $Z$ given the subtree size $n_i$ of $T_i$ is denoted by
$\mathbf{E}_{n_i}(Z):=\mathbf{E}(Z\mid n_i)$.

We write  $\xi_v:=\frac{n_v\mu^{-1}\ln{n}}{n}\cdot e^{-\lambda_v\mu^{-1}\ln{n}}$, which is used in the later part of the proof when we consider triangular arrays.

We use the notation $\Omega_L$ for the $\sigma$-field generated by $\{n_v,~d(v)\leq L\}$.
Finally, we write $\mathscr{G}_j$  
as the $\sigma$-field generated by the $\mathcal{V}$ vectors for all vertices $v$ with $d(v)\leq j$. Equivalently, this is the $\sigma$-field generated by $\{W_{r,v},~r\in\{1,2,\dots,j\}\}$, for all vertices $v$ with $d(v)= j$.
 In particular we use that the subtree sizes $\{n_v,~d(v)\leq L\}$ up to small errors are determined by the $\sigma$-field $\mathscr{G}_L$; this follows because of the representation of subtree sizes in Section \ref{Subtrees}.

\subsection{Expressing the normalized number of records as a sum of triangular arrays}

Recall from Section \ref{notation} that we define $\varphi(T_i,\Lambda_i):=
\mathbf{E}(X(T_i)_{\Lambda_i}\mid T_i,\Lambda_i)$, where $T_i$ is 
the subtree rooted at $v_i$ at depth $L$ and $\Lambda_i$ is the minimum of 
$\lambda _v$ in the path from $v_i$ to the root $\sigma$ of $T$. 

\begin{Lemma}\label{lem1} For all subtrees $T_i$ rooted at $v_i$ with $d(v_i)=L$, conditioned on the subtree
size $n_i$,
\begin{multline}\label{eq2} \varphi(T_{i},\Lambda_i)=\frac {N_i}{\mu^{-1}\ln{n_i}}(1-e^{-(\mu^{-1}\ln{n_i})\Lambda_i})-\frac {\Upsilon(T_i)-\mu^{-1}N_i\ln{n_i}}{\mu^{-2}\ln^2{n_i}} +\\
\sum_{good~v\in T_i}\frac {{(d_i(v)-\mu^{-1}\ln{n_i})^2}}{ \mu^{-3}\ln^3{n_i}} +
\mathcal O _{L^1}\Big(\frac {n_i}{{\ln^{2.2}{n_i}}}\Big),
\end{multline} 
 where $\Upsilon(T_i)$ is the total path length  of the tree $T_i$, and the good vertices $v\in T_i$ are those with $d_i(v)$ satisfying (\ref{good}).
\end{Lemma}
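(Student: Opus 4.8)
\emph{Proof idea.} The plan is first to evaluate $\varphi(T_i,\Lambda_i)$ exactly and then to Taylor-expand each summand around the typical depth $m_i:=\mu^{-1}\ln n_i$, exactly as for the binary search tree in \cite{holmgren}. Conditioning on $T_i$ and $\Lambda_i$, and recalling that inside $T_i$ the root label is set to $\Lambda_i$ while the other labels are i.i.d.\ $\mathrm{Exp}(1)$ and independent of $(T_i,\Lambda_i)$, a vertex $v\in T_i$ with $v\neq v_i$ is a record in $T_i$ if and only if $\lambda_v<\Lambda_i$ and $\lambda_v$ is smaller than the labels of the $d_i(v)-1$ vertices strictly between $v_i$ and $v$; conditioning on $\lambda_v=x$ this event has probability $\mathbf 1[x<\Lambda_i]\,e^{-x(d_i(v)-1)}$, so integrating against the $\mathrm{Exp}(1)$ density gives $\int_0^{\Lambda_i}e^{-x\,d_i(v)}\,dx=(1-e^{-\Lambda_i d_i(v)})/d_i(v)$. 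Hence
\begin{align*}\varphi(T_i,\Lambda_i)=\sum_{v\in T_i,\,v\neq v_i}\frac{1-e^{-\Lambda_i d_i(v)}}{d_i(v)}=\sum_{v\in T_i,\,v\neq v_i}\Big(\frac{1}{d_i(v)}-\frac{e^{-\Lambda_i d_i(v)}}{d_i(v)}\Big).\end{align*}

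Next I would Taylor-expand the first sum: for $d=d_i(v)\geq1$, $\tfrac1d=\tfrac1{m_i}-\tfrac{d-m_i}{m_i^2}+\tfrac{(d-m_i)^2}{m_i^3}-\tfrac{(d-m_i)^3}{\xi^4}$ with $\xi$ between $d$ and $m_i$, while for ``bad'' $v$ the exact identity $\tfrac1d=\tfrac1{m_i}-\tfrac{d-m_i}{m_i^2}+\tfrac{(d-m_i)^2}{d\,m_i^2}$ is convenient. Since $\sum_{v\neq v_i}d_i(v)=\Upsilon(T_i)$ (as $d_i(v_i)=0$) and there are $N_i-1$ such $v$, summing the first two terms gives $\tfrac{2(N_i-1)}{m_i}-\tfrac{\Upsilon(T_i)}{m_i^2}=\tfrac{N_i}{m_i}-\tfrac{\Upsilon(T_i)-\mu^{-1}N_i\ln n_i}{\mu^{-2}\ln^2n_i}+\mathcal O(1/\ln n_i)$; the quadratic terms over good $v$ give exactly $\sum_{\text{good }v}(d_i(v)-m_i)^2/(\mu^{-3}\ln^3n_i)$; and a mean-value argument (below) gives $\sum_{v\neq v_i}e^{-\Lambda_i d_i(v)}/d_i(v)=\tfrac{N_i}{m_i}e^{-m_i\Lambda_i}+(\text{error})$, which produces the last explicit term of (\ref{eq2}). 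So the lemma reduces to showing that each of the following is $\mathcal O_{L^1}(n_i/\ln^{2.2}n_i)$ after conditioning on $n_i$: (i) $\sum_{\text{good }v}(d_i(v)-m_i)^3/\xi^4$; (ii) $\sum_{\text{bad }v}(d_i(v)-m_i)^2/(d_i(v)m_i^2)$; (iii) $\sum_{v\neq v_i}\big(e^{-\Lambda_i d_i(v)}/d_i(v)-e^{-\Lambda_i m_i}/m_i\big)$; and the trivial $\mathcal O(1/\ln n_i)$.

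For (i) and (ii) I would use the depth-concentration results of Section \ref{nodes}. Since $\ln^{0.6}n_i>\ln^{0.5+\epsilon}n_i$ for small $\epsilon$ and large $n_i$, the good set contains all vertices within depth $\ln^{0.5+\epsilon}n_i$ of $m_i$, so $\mathbf E(\#\{\text{bad }v\in T_i\}\mid n_i)=\mathcal O(n_i/\ln^kn_i)$ for every $k$ by \cite[Theorem 2.2]{holmgren2}, and $\mathbf E(N_i\mid n_i)=\mathcal O(n_i)$ by (\ref{vertexball}). A good $v$ has $d_i(v)\in[m_i/2,2m_i]$ for $n_i$ large, so $|(d_i(v)-m_i)^3/\xi^4|\le 16\ln^{1.8}n_i/m_i^4$, whence (i) $\le 16N_i\ln^{1.8}n_i/m_i^4=\mathcal O_{L^1}(n_i/\ln^{2.2}n_i)$ — this is exactly where the exponents match, via $4-3\cdot0.6=2.2$. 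In (ii), a bad $v$ with $d_i(v)<m_i$ contributes at most $1$ and one with $d_i(v)>m_i$ at most $d_i(v)/m_i^2$; the first type has $L^1$-count $\mathcal O(n_i/\ln^kn_i)$, and for the second one splits the depth sum at $C\ln n_i$ and invokes the exponentially small bound of \cite[Remark 4.3]{holmgren2} for the atypically deep vertices. The only delicate point here is this $L^1$- (rather than a.a.s.-) control of deep vertices.

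The main work is (iii), and the crucial observation is that one must use the law $\Lambda_i\stackrel{d}{=}\mathrm{Exp}(1/(L+1))$ together with independence of $\Lambda_i$ and $T_i$. With $g_\Lambda(d)=e^{-\Lambda d}/d$, $g'_\Lambda(\xi)=-(\Lambda\xi+1)e^{-\Lambda\xi}/\xi^2$, for a good $v$ and $n_i$ large the mean-value theorem and monotonicity of $u\mapsto(u+1)e^{-u}$ give $|g_{\Lambda_i}(d_i(v))-g_{\Lambda_i}(m_i)|\le 4\ln^{0.6}n_i\,(\Lambda_i m_i/2+1)e^{-\Lambda_i m_i/2}/m_i^2$. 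Summing over good $v$, taking $\mathbf E(\cdot\mid n_i)$ (integrating $\Lambda_i$ out and using $\mathbf E(N_i\mid n_i)=\mathcal O(n_i)$), and using $\mathbf E\big((\Lambda_i m_i/2+1)e^{-\Lambda_i m_i/2}\big)\le (L+1)^{-1}\int_0^\infty(xm_i/2+1)e^{-xm_i/2}\,dx=\mathcal O(1/\ln n_i)$, the good part of (iii) is $\mathcal O_{L^1}(n_i\ln^{0.6}n_i/\ln^3n_i)=\mathcal O_{L^1}(n_i/\ln^{2.4}n_i)$, while the bad part is $\le 2\#\{\text{bad }v\}$. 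I expect this to be the crux: integrating out $\Lambda_i$ gains the extra factor $\mathcal O(1/\ln n_i)$ needed to absorb the $\ln^{0.6}n_i$ coming from $|d_i(v)-m_i|$, whereas a pointwise bound on $e^{-\Lambda_i d_i(v)}/d_i(v)$ would only yield $\mathcal O(n_i/\ln^{1.4}n_i)$, which is too weak. Collecting (i)--(iii) gives (\ref{eq2}).
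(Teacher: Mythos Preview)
Your approach is essentially the same as the paper's: both establish the exact formula $\varphi(T_i,\Lambda_i)=\sum_{v\neq v_i}(1-e^{-\Lambda_i d_i(v)})/d_i(v)$, Taylor-expand $1/d_i(v)$ around $m_i=\mu^{-1}\ln n_i$ for good vertices, discard the bad vertices using the bounds from \cite{holmgren2}, and then integrate out $\Lambda_i\sim\mathrm{Exp}(1/(L+1))$ to gain an extra factor $\mathcal O(L/\ln n_i)$ in the exponential part. The only genuine difference is in your step (iii): you bound $e^{-\Lambda_i d_i(v)}/d_i(v)-e^{-\Lambda_i m_i}/m_i$ in one shot via the mean value theorem applied to $g_\Lambda(d)=e^{-\Lambda d}/d$, whereas the paper instead proves $\sum_v\tfrac1{d_i(v)}(e^{-d_i(v)\Lambda_i}-e^{-m_i\Lambda_i})=\mathcal O_{L^1}(n_i/\ln^{2.2}n_i)$ by an algebraic split around $\lfloor m_i\rfloor$ (the $Q_1,Q_2$ computation), then writes $\varphi=(1-e^{-m_i\Lambda_i})\sum_v 1/d_i(v)+\mathcal O_{L^1}(\cdots)$ and separately checks that $e^{-m_i\Lambda_i}$ times the second- and third-order Taylor terms is small enough. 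Your route is slightly more direct; the paper's packaging makes the factor $(1-e^{-m_i\Lambda_i})$ appear more transparently.

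One slip to fix: your bound $\mathbf E\big((\Lambda_i m_i/2+1)e^{-\Lambda_i m_i/2}\big)\le (L+1)^{-1}\int_0^\infty(xm_i/2+1)e^{-xm_i/2}\,dx$ has the factor $(L+1)$ inverted. Since $\Lambda_i$ has density $(L+1)e^{-(L+1)x}$, the correct computation gives $(L+1)\int_0^\infty(xm_i/2+1)e^{-x(m_i/2+L+1)}\,dx=\mathcal O(L/\ln n_i)$, not $\mathcal O(1/\ln n_i)$. This does not damage the conclusion: the good-vertex contribution to (iii) becomes $\mathcal O_{L^1}(n_i L/\ln^{2.4}n_i)$, and since $L=\mathcal O(\log\ln n)=o(\ln^{0.2}n_i)$ this is still $\mathcal O_{L^1}(n_i/\ln^{2.2}n_i)$, matching the paper's bound.
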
 

\begin{proof}

Let for each vertex $v\in T_i$, $I_v$ 
 be the indicator that $\lambda{ _v}$  is the minimum value given $T_i$ and $\Lambda_i$.
We get $\varphi(T_{i},\Lambda_i)=\sum_{v\neq v_i }\mathbf{E}(I_v)$. 
If $d_i(v)=j$ in $T_i$, let ${v_i,v_{i1},...,v_{ij}=v}$ be the vertices in the path from the root 
$v_i$ to $v$. Then, $I_v =1$,
if and only if, $\lambda{ _{v_{ij}}}<\Lambda_i$ and $\lambda{ _{v_{ik}}}>\lambda{ _{v_{ij}}}$ for $k\in {\{1,\dots,j-1\}}$.
 Since the $\lambda{ _{v} }$'s (for all vertices $v$ in $T_i$) are independent Exp$(1)$ random variables
 \begin{equation}\label{indicator}\mathbf{E}(I_v)=\int_{0}^{\Lambda_i}\prod_{k=1}^{j-1}\mathbf{P}(\lambda{ _{v_ik}}>x)
e^{-x}dx=\int_{0}^{\Lambda_i}e^{-jx}dx=\frac {1-e^{-j\Lambda_i}}{j}.\end{equation} Thus,
\begin{equation*}\varphi(T_i,\Lambda_i)=\sum_{v\neq v_i}\frac {1-e^{-d_i(v)\Lambda_i}}{d_i(v)}.
\end{equation*}

Expanding $\frac {1}{d_i(v)}$ for arbitrary good $v\in T_i$ gives
\begin{align*}&
\frac {1}{d_i(v)}=\frac {1}{\mu^{-1}\ln{n_i}}-
\frac {d_i(v)-\mu^{-1}\ln{n_i}}{\mu^{-2}\ln^2{n_i}}+\frac {{(d_i(v)-\mu^{-1}\ln{n_i})}^2}{\mu^{-3}\ln^3{n_i}}
\nonumber\\&+\mathcal O\Big( \frac {\mid (d_i(v)-\mu^{-1}\ln{n_i})^3\mid }{{\ln^4{n_i}}} \Big).\end{align*}
 
Recall from Section \ref{nodes}, that the number of bad vertices in $T_i$, i.e., those that are 
not in the strip in (\ref{good}), is $\mathcal O_{L^{1}} \Big(\frac {n_i}{\ln^{3}{n_i}}\Big)$ 
and can thus be ignored.
 Thus, summing over all nodes $v\in T_i$ gives \begin{multline}\label{eq:height3}
\sum_{v\neq v_i }\frac {1}{d_i(v)} =\sum_{good~v\neq v_i }\frac {1}{d_i(v)}+\mathcal O_{L^{1}} \Big(\frac {n_i}{\ln^{3}{n_i}}\Big)= \frac {N_i}{\mu^{-1}\ln{n_i}}-\\
\frac {\Upsilon(T_i)-\mu^{-1}N_i\ln{n_i}}{\mu^{-2}\ln^2{n_i}}+\sum_{good~v \in T_i}\frac {{(d_i(v)-\mu^{-1}\ln{n_i})^2}}{ \mu^{-3}\ln^3{n_i}} 
+\mathcal O_{L^{1}} \Big(\frac {n_i}{\ln^{2.2}{n_i}}\Big).\end{multline}


Now we prove that
\begin{equation}\label{eq:3}\sum_{v\neq v_i }\frac {1}{d_i(v)}(e^{-d_i(v)}-e^{-(\mu^{-1}\ln{n_i})\Lambda_i})
=\mathcal O _{L^1}\Big(\frac {n_i}{\ln^{2.2}{n_i}}\Big),
\end{equation} which obviously implies, \begin{equation}\label{eq:3,1}\varphi(T_i,\Lambda_i)=(1-e^{-(\mu^{-1}\ln{n_i})\Lambda_i})\sum_{v\neq v_i }\frac {1}{d_i(v)}+ 
\mathcal O _{L^1}\Big(\frac {n_i}{\ln^{2.2}{n_i}}\Big).
\end{equation}

For simpler calculations we show the bound in (\ref{eq:3}) by considering,
$e^{-\Lambda_i\lfloor \mu^{-1}\ln{n_i}\rfloor}$ instead of $e^{-(\mu^{-1}\ln{n_i})\Lambda_i}$.
That one can do this is because multiplying the Taylor estimate in (\ref{eq:height3}) 
by $e^{-\Lambda_i\lfloor \mu^{-1}\ln{n_i}\rfloor}$, gives the same expression up to the error term 
$\mathcal O _{L^1}\Big(\frac {n_i}{\ln^{2.2}{n_i}}\Big)$ as multiplying by $e^{-(\mu^{-1}\ln{n_i})\Lambda_i}$. For $j>0$, \begin{displaymath}
e^{(-\lfloor\mu^{-1}\ln{n_i}\rfloor +j)\Lambda_i}=e^{-\Lambda_i\lfloor\mu^{-1}\ln{n_i}\rfloor}+e^{(-\lfloor\mu^{-1}\ln{n_i}\rfloor +j)\Lambda_i}
(1-e^{-j\Lambda_i})
\end{displaymath} and \begin{align*}
&e^{(-\lfloor\mu^{-1}\ln{n_i}\rfloor -j)\Lambda_i}=e^{-\Lambda_i\lfloor \mu^{-1}\ln{n_i}\rfloor}+e^{(-\lfloor\mu^{-1}\ln{n_i}\rfloor +j)\Lambda_i}
(e^{-2j\Lambda_i}-e^{-j\Lambda_i}).
\end{align*}

 Since we only have to consider the good vertices it is enough to show that 
\begin{align}\label{profile2}
Q_1+Q_2=\mathcal O _{L^1}\Big(\frac {n_i}{\ln^{2.2}{n_i}}\Big),
\end{align} 
where 
\begin{align*}
 Q_1:=\sum_{j=1}^{\lfloor \ln^{0.6}{n_i}\rfloor }\sum_{d_i(v)=j}e^{(-\lfloor\mu^{-1}\ln{n_i}\rfloor +j)\Lambda_i}\cdot (1-e^{-j\Lambda_i})
\cdot \frac {1}{\lfloor \mu^{-1}\ln{n_i}\rfloor -j},
\end{align*}
\begin{align*}
 Q_2:=\sum_{j=1}^{\lfloor\ln^{0.6}{n_i}\rfloor}\sum_{d_i(v)=j} \nonumber
e^{(-\lfloor\mu^{-1}\ln{n_i}\rfloor +j)\Lambda_i}\cdot 
(e^{-2j\Lambda_i}-e^{-j\Lambda_i})\cdot \frac {1}{\lfloor \mu^{-1}\ln{n_i}\rfloor +j}.
\end{align*}

We have
\begin{align}\label{compare1}Q_1&
\leq N_i e^{(-\lfloor\mu^{-1}\ln{n_i}\rfloor +\ln^{0.6}{n_i})\Lambda_i}\cdot (1-e^{-\ln^{0.6}{n_i}\Lambda_i})
\cdot \frac {1}{\lfloor \mu^{-1}\ln{n_i}\rfloor -\ln^{0.6}{n_i}}\nonumber\\&=
N_i\mathcal O\Big(\frac {\ln^{0.6}{n_i}\Lambda_i}{ \mu^{-1}\ln{n_i}}\Big)  
e^{(-\lfloor \mu^{-1}\ln{n_i}\rfloor +\ln^{0.6}{n_i})\Lambda_i}.
\end{align}
and similarly
\begin{align*}&Q_2 = N_i\mathcal O\Big(\frac {\ln^{0.6}{n_i}\Lambda_i}{ \mu^{-1}\ln{n_i}}\Big)  
e^{(-\lfloor \mu^{-1}\ln{n_i}\rfloor +\ln^{0.6}{n_i})\Lambda_i}.
\end{align*}

Since $\Lambda_i$ is Exp$(\frac{1}{L+1})$ random variable, we get that
\begin{align*} &\mathbf{E}\big(\Lambda_ie^{(-\lfloor \mu^{-1}\ln{n_i}\rfloor +\ln^{0.6}{n_i})\Lambda_i}\big)
=\int_{0}^{\infty}
(L+1)ye^{(-\lfloor \mu^{-1}\ln{n_i}\rfloor +\ln^{0.6}{n_i})y}e^{-y(L+1)}dy=
\nonumber\\&\Big|\frac {(L+1)ye^{(-\lfloor\mu^{-1}\ln{n_i}\rfloor +\ln^{0.6}{n_i}-(L+1))y}}{-\lfloor\mu^{-1}\ln{n_i}\rfloor 
+\ln^{0.6}{n_i}-L-1}\Big|_{0}^{\infty}-
\int_{0}^{\infty}\frac {(L+1)e^{(-\lfloor \mu^{-1}\ln{n_i}\rfloor +\ln^{0.6}{n_i}-(L+1))y}}{-\lfloor \mu^{-1}\ln{n_i}\rfloor +\ln^{0.6}{n_i}
-L-1}dy \nonumber\\&=\frac {L+1}{(\lfloor\mu^{-1}\ln{n_i}\rfloor -\ln^{0.6}{n_i}+L+1)^2},
\end{align*} 
 Thus, (\ref{profile2}) holds and it follows that $(\ref{eq:3,1})$ is satisfied. 

Now we show that $(\ref{eq:3,1})$ implies (\ref{eq2}) in Lemma \ref{lem1}. We have $e^{-(\mu^{-1}\ln{n_i})\Lambda_i}=\mathcal O _{L^1} (\frac {L}{\ln{n_i}})$. Hence, 
\begin{align*}e^{-(\mu^{-1}\ln{n_i})\Lambda_i} \cdot \sum_{good~v\in T_i}\frac {{(d_i(v)-\mu^{-1}\ln{n_i})^2}}{ \mu^{-3}\ln^3{n_i}}=\mathcal O _{L^1}\Big(\frac {n_i}{\ln{n_i}}\Big).
 \end{align*}
 Recall from Section \ref{nodes} that the number of bad nodes in $T_i$ is $\mathcal O _{L^1}(\frac {n_i}{\ln^{3}{n_i}})$ and that for any constant $r$ there is a constant $C>0$ such that the number of nodes with $d(v)\geq C \ln n$  
is $\mathcal{O}_{L^{1}}\Big(\frac{1}{n^r}\Big)$. By using these facts we get an obvious 
upper bound of the total path length, i.e.,
\[\mid \Upsilon(T_i)-\mu^{-1} N_i\ln{n_i}\mid \leq N_i\ln^{0.6}{n_i}+\mathcal O _{L^1}\Big(\frac {n_i}{\ln{n_i}}\Big).\]
Hence, \[ \frac {(\Upsilon(T_i)-\mu^{-1}N_i\ln{n_i})e^{-(\mu^{-1}\ln{n_i})\Lambda_i}}{\mu^{-2}\ln^2{n_i}}
=\mathcal O _{L^1}\Big(\frac {n_i}{\ln^{2.2}{n_i}}\Big), \] 
and Lemma \ref{lem1} follows. 
\end{proof}

Recall from Section \ref{notation} that we define $\psi(T_i,\Lambda_i):=\mathbf{Var}(X(T_i)_{\Lambda_i}\mid T_i,\Lambda_i)$, 
and that we write 
 $\mathbf{E}_{n_i}(.):=\mathbf{E}(.|n_i)$ for the conditional expected value given $n_i$.

\begin{Lemma}\label{lem2} For all vertices $v_i$ with $d(v_i)=L$, conditioned on $n_i$,
 \begin{equation*}\mathbf{E}_{n_i}(\psi(T_i,\Lambda_i))=\mathcal O \Big(\frac {n_{i}^2}{{\ln^3{n_i}}}\Big).\end{equation*} 
\end{Lemma}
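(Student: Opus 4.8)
The plan is to bound the conditional variance $\psi(T_i,\Lambda_i)=\mathbf{Var}(X(T_i)_{\Lambda_i}\mid T_i,\Lambda_i)$ by a second moment computation, exactly as in the corresponding lemma of \cite{holmgren} for the binary search tree, and then take the expectation over $\Lambda_i$ given $n_i$. Recall that $X(T_i)_{\Lambda_i}=\sum_{v\neq v_i} I_v$, where $I_v$ is the indicator that $\lambda_v$ is the minimum of the labels along the path from $v_i$ to $v$ (among the labels strictly below $v_i$) and is smaller than $\Lambda_i$; these indicators are \emph{not} independent, but two indicators $I_v,I_w$ are independent unless the paths $P(v)$ and $P(w)$ (inside $T_i$) share an ancestor-descendant relation. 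So the first step is to write
\begin{align*}
\psi(T_i,\Lambda_i)=\sum_{v\neq v_i}\mathbf{Var}(I_v)+\sum_{\substack{v\neq w\\ v,w\neq v_i}}\mathbf{Cov}(I_v,I_w)
\le \sum_{v\neq v_i}\mathbf{E}(I_v)+2\sum_{v\prec w}\mathbf{E}(I_v I_w),
\end{align*}
where $v\prec w$ means $v$ is a strict ancestor of $w$ in $T_i$ (covariances between incomparable vertices vanish, and $\mathbf{Cov}(I_v,I_w)\le\mathbf{E}(I_vI_w)$ in the comparable case; the diagonal contributes $\sum\mathbf{E}(I_v)$).

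The second step is to estimate the two sums. For the first, $\sum_{v\neq v_i}\mathbf{E}(I_v)=\varphi(T_i,\Lambda_i)$, which by Lemma~\ref{lem1} (or just the crude bound $\mathbf{E}(I_v)\le 1/d_i(v)$ together with the fact that most vertices sit near depth $\mu^{-1}\ln n_i$) is $\mathcal O_{L^1}\!\big(N_i/\ln n_i\big)=\mathcal O_{L^1}\!\big(n_i/\ln n_i\big)$, which is negligible compared with $n_i^2/\ln^3 n_i$. For the second, if $v\prec w$ with $d_i(v)=j$ and $d_i(w)=j+\ell$, then $I_v I_w=1$ forces $\lambda_w<\Lambda_i$, $\lambda_w$ smaller than all labels strictly between $v_i$ and $w$, and $\lambda_v$ smaller than all labels strictly between $v_i$ and $v$; a direct integration of $\min(\lambda_v,\ldots)$ over independent $\mathrm{Exp}(1)$ labels gives something of order $\frac{1}{(j+\ell)}\cdot\frac{1}{\ell}$ up to the truncation at $\Lambda_i$ (this is the standard "two-record" estimate from \cite{holmgren}). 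Summing over all ancestor-descendant pairs and using that both the good vertices and their relevant ancestors live at depths $\asymp\mu^{-1}\ln n_i$, the denominators $(j+\ell)$ and $\ell$ are both $\gtrsim$ a constant but the dominant contribution comes from pairs where $w$ is good (depth $\sim \mu^{-1}\ln n_i$) and $v$ ranges over its $\sim\mu^{-1}\ln n_i$ ancestors, giving $\sum_{v\prec w}\mathbf{E}(I_vI_w)=\mathcal O_{L^1}\!\big(N_i/\ln n_i\big)\cdot\mathcal O(1)$ — wait, more carefully: for each good $w$ the inner sum over ancestors $v$ is $\sum_{\ell\ge 1}\frac{1}{(d_i(w))\ell}\cdot(\text{trunc})\lesssim \frac{\ln(d_i(w))}{d_i(w)}$, and the number of descendants $w$ below a given $v$ at depth $j$ is at most $N_i$; a cleaner bookkeeping is to fix $w$, so the whole double sum is $\lesssim \sum_{w}\frac{\ln\ln n_i}{\ln n_i}= \mathcal O_{L^1}\!\big(N_i\ln\ln n_i/\ln n_i\big)=o(n_i^2/\ln^3 n_i)$ — here one must be slightly careful because $N_i$ itself can be as large as $n_i$, but since $n_i\le n$ and $N_i\le n_i$, and we have an extra factor $n_i$ to spare against the target $n_i^2/\ln^3 n_i$, all of these bounds comfortably fit. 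Finally take $\mathbf{E}_{n_i}$: all the estimates above are deterministic bounds in terms of $N_i$ and depths once $T_i$ and $\Lambda_i$ are fixed, and $\mathbf{E}_{n_i}(N_i)=\mathcal O(n_i)$ by (\ref{vertexball}), so $\mathbf{E}_{n_i}(\psi(T_i,\Lambda_i))=\mathcal O(n_i^2/\ln^3 n_i)$ with room to spare; the point of the weak bound in the statement is simply that it is more than enough for the later second-moment / truncation argument.

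The main obstacle, and the only place requiring genuine care, is the covariance sum: one must correctly identify which pairs $(v,w)$ contribute and get the $\frac{1}{(j+\ell)\ell}$-type decay from the $\mathrm{Exp}(1)$ integrals, and then organize the summation so that the factor of $N_i$ (number of vertices) appears only once rather than squared. This is where the analogy with \cite[the variance lemma]{holmgren} is used verbatim; the only new ingredient over the binary search tree case is that depths are now concentrated around $\mu^{-1}\ln n_i$ rather than $2\ln n_i$, which only changes constants, and that $N_i$ (not $n_i$) counts the vertices — but since $N_i\le n_i$ and we are aiming for the crude bound $\mathcal O(n_i^2/\ln^3 n_i)$, this distinction costs nothing. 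I would therefore present the proof as: (i) the variance $\le$ diagonal $+$ comparable-pair covariance decomposition; (ii) diagonal $= \varphi(T_i,\Lambda_i)=\mathcal O_{L^1}(n_i/\ln n_i)$; (iii) comparable-pair sum $=\mathcal O_{L^1}(n_i\ln\ln n_i/\ln n_i)$ via the two-record integral estimate restricted to good vertices plus the negligible bad-vertex bound from Section~\ref{nodes}; (iv) take $\mathbf{E}_{n_i}$ and invoke (\ref{vertexball}) to replace $N_i$ by $\mathcal O(n_i)$, concluding $\mathcal O(n_i^2/\ln^3 n_i)$.
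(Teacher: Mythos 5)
Your decomposition is wrong at the first step, and the error is decisive. You assert that ``covariances between incomparable vertices vanish,'' so that only ancestor--descendant pairs $v\prec w$ contribute. That is false: if $v$ and $w$ are incomparable but their last common ancestor $u$ in $T_i$ has $d_i(u)=d\geq 1$, then the paths from $v_i$ to $v$ and from $v_i$ to $w$ share the $d$ labels $\lambda_{u_1},\dots,\lambda_{u_d}$, and $I_v,I_w$ are manifestly dependent. The covariance vanishes only when $u=v_i$ (i.e.\ $d=0$), because then the two paths are label-disjoint. The paper's proof is built precisely around these incomparable pairs: it introduces the depth $d$ of the common ancestor, records a formula for $\mathbf{E}(I_vI_w)$ (the analogue of equation~(18) of \cite{holmgren}), and shows $\mathbf{Cov}(I_v,I_w)=\mathcal O(d/\ln^3 n_i)$ for good pairs. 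Dropping these pairs removes the dominant term.

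Indeed, a second-moment check shows that the incomparable pairs are what produce the $n_i^2/\ln^3 n_i$ scale: the expected number of pairs with common ancestor at depth $d$ is governed by $\sum_{d(u)=d}\mathbf{E}_{n_i}({n_i}_u^2)$, which by (\ref{binomial})--(\ref{subtree2,3}) decays like $n_i^2/(b+\epsilon)^d$ for some $\epsilon>0$ because $\mathbf{E}(V^2)<\mathbf{E}(V)=1/b$. Multiplying by the per-pair covariance $\mathcal O(d/\ln^3 n_i)$ and summing over $d$ gives $\sum_d b^d\cdot d\cdot n_i^2 (b+\epsilon)^{-d}/\ln^3 n_i=\mathcal O(n_i^2/\ln^3 n_i)$, which is the claimed bound and is sharp in order. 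Your ancestor--descendant sum, which you estimate as $\mathcal O_{L^1}(N_i\ln\ln n_i/\ln n_i)$, is genuinely of smaller order, so it is not the controlling term; and your proposed total bound $o(n_i^2/\ln^3 n_i)$ is therefore incorrect as well. The ingredient you are missing is exactly the subtree-second-moment estimate $\mathbf{E}_{n_i}({n_i}_v^2)\leq n_i^2\prod_{r=1}^d\mathbf{E}(W_r^2)+\mathcal O(n_i d/b^d)+\mathcal O(d^2)$ together with the strict inequality $\mathbf{E}(V^2)<1/b$, which gives geometric decay in $d$ and makes the sum over all incomparable pairs converge to the stated order. Without this, there is no way to control the bulk of the covariance sum.
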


\begin{proof}
For all vertices $v\in T_i$, let $I_{v}$ 
be the same indicator as in the proof of Lemma \ref{lem1} above. 
Suppose that $v$ and $w$ are two vertices in $T_i$ at depth $d_i(v)=j,~d_i(w)=k$ 
with last common ancestor at depth $d_i(u)=d$. Suppose first that $d<j,d<k$.  Let $\{v_i,u_1, \ldots,u_d=u\}$ 
be the vertices in the path from $v_i$ to $u$ and let $Z=\min\{\lambda _{u_s}:1\leq s\leq d\}$. 
Conditioned on $Z$, $I_v$ and $I_w$ are independent.  Let $Z\wedge \Lambda_i$ denote the minimum of $Z$ and $\Lambda_i$.
Since $v$ has depth $j-d$ above $u$, (\ref{indicator}) yields \[\mathbf{E}(I_v\mid Z)=\frac {1-e^{-(j-d)
(Z\wedge \Lambda_i)}}{j-d},\]
and similarly for  $I_w$. (Compare this with \cite[Lemma 2.2]{holmgren}.) As in \cite[equation (18)]{holmgren}, 
\begin{eqnarray}\label{indicator2}\mathbf{E}(I_vI_w)=\frac {1}{j-d}\frac {1}{k-d}\Bigg(1-e^{-d\Lambda_i}-
\frac {d}{j}(1-e^{-j\Lambda_i})-\frac {d}{k}(1-e^{-k\Lambda_i})+\nonumber\\\frac {d}{j+k-d}(1-e^{-(j+k-d)\Lambda_i})+
e^{-d\Lambda_i}-e^{-j\Lambda_i}-e^{-k\Lambda_i}
+e^{-(j+k-d)\Lambda_i}\Bigg).\end{eqnarray} 

The covariance of $I_v$ and $I_w$ is 
\[\mathbf{Cov}(I_v,I_w)=\mathbf{E}(I_vI_w)-\mathbf{E}(I_v)\mathbf{E}(I_w).\] We say that a pair $(v,w)$ is "good" if $j$ and $k$ satisfy 
\begin{align*} \mu ^{-1}\ln{n_i}-\ln^{0.6}{n_i}\leq j,k \leq \mu ^{-1}\ln{n_i}+\ln^{0.6}{n_i},\end{align*} 
 and otherwise it is "bad". 
 From \cite[equation (7)]{jan1} by (\ref{indicator}) and (\ref{indicator2}) above, for a good pair
 \begin{align}\label{indicator3}\mathbf{Cov}(I_v,I_w) &=
\frac {1}{jk}e^{-(j+k-d)\Lambda_i}(1-e^{-d\Lambda_i})+\mathcal O \Big(\frac {d}{{\ln^3{n_i}}}\Big)=\mathcal O _{L^1}\Big(\frac {d}{{\ln^3{n_i}}}\Big).
\end{align} 
(Compare this with \cite[equation (19)]{holmgren}.)  Since the number of bad vertices is $\mathcal O_{L^1} (\frac{n_i}{\ln^3{n_i}})$ it follows that the number of bad pairs, is $\mathcal O_{L^1} (\frac{n_{i}^{2}}{\ln^3{n_i}})$.
Hence, because of the obvious upper bound that $\mathbf{Cov}(I_v,I_w)$ is at most 1, the sum of covariances for the bad pairs is 
 $\mathcal O \Big(\frac {n_i^2}{{\ln^3{n_i}}}\Big)$.
 Thus,
 \begin{align}\label{var3}\mathbf{E}_{n_i}(\psi(T_i,\Lambda_i))=\mathbf{E}_{n_i}\Big(\sum_{good~~(v,w)\in T_i}\mathbf{Cov}(I_v,I_w)\Big)+\mathcal O \Big(\frac {n_i^2}{{\ln^3{n_i}}}\Big).
 \end{align}

Recall that $\mathscr{G}_j$ is the $\sigma$-field generated by the split vectors for all vertices $v$ with $d(v)\leq j$.
 Recall the representation of subtree sizes in split trees described in (\ref{binomial}) in Section \ref{Subtrees}.
Recall that ${n_i}_{v}$ denotes the number of balls in the subtrees rooted at $v$ for $v\in T_i$. From (\ref{binomial})  we get that for $v$, where $d_i(v)=d$, 
\begin{align*}
 \mathbf{E}_{n_i}({n_{i}}_v|\mathscr{G}_{L+d})\leq n_i\prod_{r=1}^dW_r+s_1d.
\end{align*}
 Thus, 
\begin{equation*}\mathbf{E}_{n_i}({n_{i}}_v)\leq n_i\prod_{r=1}^d {\mathbf{E}({W_r})}
+ds_1=\frac{n_i}{b^d}+ds_1.\end{equation*} 

Again by using (\ref{binomial}) we get that 
\begin{align*}
 \mathbf{E}_{n_i}({n_{i}}_v^2|\mathscr{G}_{L+d})=n_i^{2}\prod_{r=1}^dW_r^{2}+\mathcal{O}(n_id\prod_{r=1}^dW_r)+\mathcal{O}(d^2).
\end{align*}
 Thus, 
\begin{equation}\label{subtree2,2}\mathbf{E}_{n_i}({n_{i}}_v^2)\leq n_i^2\prod_{r=1}^d {\mathbf{E}({W_r}^2})+\mathcal{O}\Big(\frac{n_id}{b^d}\Big)+\mathcal{O}(d^2).\end{equation}

Note that $\mathbf{E}(W_r^2)<\mathbf{E}(W_r)=\frac{1}{b}$ since $W_r\in [0,1]$.  Hence, there is an 
$\epsilon>0$ such that the right hand-side in (\ref{subtree2,2}) is bounded by
\begin{equation}\label{subtree2,3}\frac{n_i^2}{(b+\epsilon)^d}+\mathcal{O}\Big(\frac{n_id}{b^d}\Big)+\mathcal{O}(d^2).\end{equation}

From (\ref{var3}) by using (\ref{indicator3}),
 (\ref{subtree2,2}) and (\ref{subtree2,3}),
\begin{align*}\mathbf{E}_{n_i}(\psi(T_i,\Lambda_i))&=
\mathcal O \Big(\sum_{d}
\frac {n_i^2\cdot b^d\cdot d}{(b+\epsilon )^d{\ln^3{n_i}}}\Big)+\mathcal O \Big(\frac {n_i^2}{{\ln^3{n_i}}}\Big)
=\mathcal O \Big(\frac {n_i^2}{{\ln^3{n_i}}}\Big).\end{align*} \end{proof}

 The estimate in Lemma \ref{lem2} is used in the proof of the following result.

\begin{Lemma}\label{lem3} In a split tree $T^n$, let ${v_i},1\leq i\leq b^L$,
 be the vertices at depth $L=\lfloor \beta \log_b\ln{n}\rfloor$ choosing $\beta>\frac{1}{-\log_b\mathbf{E}(V^2)-1}$. 
 Then 
\begin{equation*}X(T^n)=\sum_{i=1}^{b^L}\varphi(T_i,\Lambda_i)
+o_p\Big(\frac {n}{{\ln^2{n}}}\Big). 
\end{equation*} 
\end{Lemma}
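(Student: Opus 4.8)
The plan is to decompose $X(T^n)$ according to the subtrees hanging at depth $L$ and to show that, up to the error term $o_p(n/\ln^2 n)$, only the conditional expectations $\varphi(T_i,\Lambda_i)$ survive. First I would write
\[
X(T^n) = (\text{number of records at depth}\le L) \;+\; \sum_{i=1}^{b^L} X(T_i)_{\Lambda_i},
\]
where the second sum counts records strictly below depth $L$: a vertex $v\in T_i$ is a record for $T^n$ iff $\lambda_v$ is smaller than all labels on the path from $v$ up to $v_i$ \emph{and} smaller than $\Lambda_i$, which is exactly the event counted by $X(T_i)_{\Lambda_i}$ (recall $X(T_i)_{\Lambda_i}$ is $X(T_i)-1$ conditioned on the root label of $T_i$ being $\Lambda_i$, and $\Lambda_i$ is the path-minimum above $v_i$). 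The first term is at most the total number of vertices with $d(v)\le L$, which is $O(b^L)=O(\ln^{\beta}n)$ and hence $o(n/\ln^2 n)$; in fact one can absorb it together with the $b^L$ single ``$-1$'' corrections, since $b^L = O(\ln^\beta n)$ is negligible.

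Next I would condition on $\Omega_L$ (equivalently, up to small errors, on $\mathscr G_L$), which determines the trees $T_i$ and the values $\Lambda_i$, and write
\[
\sum_{i=1}^{b^L} X(T_i)_{\Lambda_i} \;=\; \sum_{i=1}^{b^L}\varphi(T_i,\Lambda_i) \;+\; \sum_{i=1}^{b^L}\bigl(X(T_i)_{\Lambda_i}-\varphi(T_i,\Lambda_i)\bigr).
\]
The summands in the last sum are, conditionally on $(\{T_i\},\{\Lambda_i\})$, centered and \emph{independent} across $i$ (the subtrees $T_i$ are disjoint and carry disjoint families of i.i.d.\ labels). Hence its conditional variance is $\sum_{i=1}^{b^L}\psi(T_i,\Lambda_i)$. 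Taking expectations and using Lemma \ref{lem2}, together with the bound $\mathbf E_{n_i}(n_i^2)\le n^2/(b+\epsilon)^L + \cdots$ coming from (\ref{subtree2,2})--(\ref{subtree2,3}) (exactly the moment estimate that motivates the choice $\beta>1/(-\log_b\mathbf E(V^2)-1)$, i.e.\ $(b+\epsilon)^{L}\gg \ln^{\text{(large)}}n$ when $(b+\epsilon)=1/\mathbf E(V^2)$), I get
\[
\mathbf E\Bigl(\sum_{i=1}^{b^L}\psi(T_i,\Lambda_i)\Bigr) = O\!\Bigl(\sum_{i=1}^{b^L}\frac{\mathbf E(n_i^2)}{\ln^3 n_i}\Bigr) = O\!\Bigl(\frac{b^L}{\ln^3 n}\cdot \frac{n^2}{(b+\epsilon)^L}\Bigr) + (\text{lower order}) = o\!\Bigl(\frac{n^2}{\ln^4 n}\Bigr),
\]
where one also has to replace the random $\ln n_i$ by $\ln n$ using that $n_i$ is concentrated around $n\prod W_r$ with $\ln n_i = \ln n + O_p(\log\log n)$ for the bulk of the mass (and the rare subtrees with atypically small $n_i$ contribute negligibly, by the tail bounds in Section \ref{nodes} and (\ref{binomial,1})). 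Chebyshev's inequality then gives $\sum_i (X(T_i)_{\Lambda_i}-\varphi(T_i,\Lambda_i)) = o_p(n/\ln^2 n)$, and combining the three pieces yields the claim.

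The main obstacle is the variance bound: one must control $\sum_i \mathbf E(\psi(T_i,\Lambda_i))$ \emph{with the random normalizations} $\ln^3 n_i$ and the random number $b^L$ of terms, and verify that the choice of $\beta$ makes the geometric factor $b^L/(b+\epsilon)^L$ small enough to beat the target $n^2/\ln^4 n$. This requires combining Lemma \ref{lem2} with the subtree-size moment estimates of Section \ref{Subtrees} and a careful splitting into subtrees with ``typical'' versus ``atypically small'' cardinality $n_i$ — the latter handled by the large-deviation/tail estimates quoted in Section \ref{nodes}. Everything else (the decomposition of records by the depth-$L$ frontier, the conditional independence and centering, the negligibility of the $O(b^L)$ boundary terms) is routine once the conditioning on $\Omega_L$ is set up correctly.
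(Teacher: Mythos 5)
Your proof follows essentially the same path as the paper: decompose $X(T^n)$ across the depth-$L$ frontier into $P^\ast$ plus the per-subtree record counts, observe that these are conditionally independent with conditional mean $\varphi(T_i,\Lambda_i)$ and conditional variance $\psi(T_i,\Lambda_i)$, bound $\sum_i\mathbf{E}\psi(T_i,\Lambda_i)$ via Lemma~\ref{lem2} together with the moment estimate $\mathbf{E}(n_i^2)\le n^2(\mathbf{E} V^2)^L+\mathcal O(nL)$, use the condition on $\beta$ to make this $o(n^2/\ln^4 n)$, and finish by Chebyshev/Markov. One small slip: conditioning on $\Omega_L$ or $\mathscr G_L$ determines the subtree sizes (and essentially the trees $T_i$) but \emph{not} the labels $\lambda_v$ for $d(v)\le L$, hence not the $\Lambda_i$'s; the paper conditions on $\mathscr F_L^\ast=\sigma(T^n)\vee\sigma(\lambda_v:d(v)\le L)$ for exactly this reason --- though you do implicitly use the right conditioning when you later condition on $(\{T_i\},\{\Lambda_i\})$, so the argument itself is sound.
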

 
\begin{proof}
We write the number of records as $\{P^{\ast }+P_1+\ldots+P_{b^L} \}$, where $P^{\ast }$ is 
the number of records with depth at most $L$ and $P_i$ is the number of records in the subtree $T_i$ rooted at depth $L$, except for the root $v_i$.
Let $\mathscr F_{L}$ be the $\sigma$-field generated 
by $\{\lambda _v:d(v)\leq L\}$ and $\mathscr F_{L}^{\ast}$ the $\sigma$-field generated by $T^n$ and $\mathscr F_{L}$. We also note that $\mathbf{E}(P_i\mid \mathscr F_{L}^{\ast}) =
\varphi(T_i,\Lambda_i)$. 
By the same calculation as in \cite[equation (22)]{holmgren},
\begin{align}\label{sum2} 
&\mathbf{E}\bigg{(}\Big(X(T^n)-P^{\ast }-
\sum_{i=1}^{b^L}{\varphi(T_i,\Lambda_i)}{\Big)}^2 \bigg| \mathscr F_{L}^{\ast} \bigg{)}
=\sum_{i=1}^{b^L}\psi(T_i,\Lambda_i).
\end{align} 

Taking the expectation of the conditional expected value in (\ref{sum2}) yields 
\begin{align}\label{sum3} 
&\mathbf{E}\bigg{(}\Big(X(T^n)-P^{\ast }-
\sum_{i=1}^{b^L}{\varphi(T_i,\Lambda_i)}{\Big)}^2 \bigg{)}
=\sum_{i=1}^{b^L}\mathbf{E}\psi(T_i,\Lambda_i).
\end{align}
 
We observe the obvious fact that the sum of those $n_i,~i\in\{1,\dots, b^L\}$,
that are less than $\frac{n}{b^{kL}}$ for $k$ large enough, is bounded by
\begin{align}\label{bound}b^L \cdot\frac{n}{b^{kL}}=\mathcal{O}\Big(\frac{n}{\ln^3 n}\Big).
\end{align}
(Note that by choosing $k$ large enough in (\ref{bound}) 
the power of the logarithm can be taken arbitrarily large.)
Lemma \ref{lem2} and (\ref{bound}) 
give that \begin{align}\label{sum3,1}\sum_{i=1}^{b^L}\mathbf{E}_{n_i}(\psi(T_i,\Lambda_i))=\mathcal O \Big(\sum_{i=1}^{b^L}{\frac {n_i^2}{{\ln^3{n}}}}\Big).
\end{align}
(Compare this with \cite[equation (25)]{holmgren}.)
The expected value  of the sum in (\ref{sum3,1}) is equal to the expected value of the left hand-side in 
(\ref{sum3}).
From the calculations in (\ref{subtree2,2})  above for $i \in \{1,\ldots, b^L\}$,
\begin{equation}\label{subtree2,4}\mathbf{E}(n_i^2)\leq n^2(\mathbf{E}(V^2))^L+\mathcal{O}(nL).\end{equation}

Hence, choosing $\beta>\frac{1}{-\log_b\mathbf{E}(V^2)-1}$ one gets from (\ref{subtree2,4}) that
\begin{align}\label{sum4}&\sum_{i=1}^{b^L}\mathbf{E}(n_i^2)=o(\frac{n^2}{\ln n}), \end{align}
  and thus the left hand-side in (\ref{sum3}) is $o(\frac{n^2}{\ln^4 n})$.
Thus, Lemma \ref{lem3} follows from the well-known Markov inequality.
\end{proof}
Applying Lemma \ref{lem1} and Lemma \ref{lem3} yields for $\beta>\frac{1}{-\log_b\mathbf{E}(V^2)-1}$ that
 \begin{multline}\label{nicesum1,1}X(T^n)=\sum_{i=1}^{b^L}\bigg(\frac{2N_i}{\mu^{-1}\ln{n_i}}-\frac {N_i e^{-(\mu^{-1}\ln{n_i})\Lambda_i}}{\mu^{-1}\ln{n_i}}
 +\sum_{good~v \in T_i}\frac {{(d_i(v)-\mu^{-1}\ln{n_i})^2}}{ \mu^{-3}\ln^3{n_i}}
 \\-\frac {\Upsilon (T_{i})}{\mu^{-2}{\ln^2{n_i}}}\bigg) +
o_p\Big(\frac {n}{{\ln^2{n}}}\Big),
\end{multline}
where we used that the Markov inequality gives $\mathcal O _{L^1}\Big(\frac{n}{\ln^{2.2}{n}}\Big)=o_p\Big(\frac {n}{{\ln^2{n}}}\Big)$.

In \cite[Corollary 2.2]{holmgren2} we prove that  
\begin{align}\label{indsum3}&\sum_{i=1}^{b^L}\sum_{good~v\in T_i}\frac {{(d_i(v)-\mu^{-1}\ln{n_i})^2}}{ \mu^{-3}\ln^3{n_i}}=\frac{\sigma^2\alpha n}{\ln^2{n}}+o_p\Big(\frac {n}{{\ln^2{n}}}\Big).\end{align}

We get for  $n_i \geq \frac{n}{b^{kL}}$,
\begin{align*}&\mathbf{E}_{n_i}\Big(\Big| e^{-(\mu^{-1}\ln{n_i})\Lambda_i}-e^{-(\mu^{-1}\ln{n})\Lambda_i}\Big|\Big)
\\&=\frac{L+1}{L+1+\mu^{-1}\ln{n_i}}-\frac{L+1}{L+1+\mu^{-1}\ln{n}}\\&=\mathcal O \Big(\frac{L^2}{\ln^2{n}}\Big),
\end{align*}
and it follows that
\begin{align}\label{pontus2}\mathbf{E}\Big(\Big| \frac {N_i}{\mu^{-1}\ln{n_i}}e^{-(\mu^{-1}\ln{n_i})\Lambda_i}-\frac {N_i}{\mu^{-1}\ln{n}}e^{-(\mu^{-1}\ln{n})\Lambda_i}\Big|\Big)
=\mathcal O \Big(\frac{L^2n}{b^{L}\ln^3{n}}\Big).\end{align}

Again we use the bound in (\ref{bound}) for those $n_i <\frac{n}{b^{kL}}$ (for large enough $k$) 
so that we can ignore them in the sums in (\ref{nicesum1,1}).
Thus, by (\ref{indsum3}) and (\ref{pontus2}) with another application of the Markov inequality,
 the approximation in (\ref{nicesum1,1}) can be simplified to 
\begin{multline}\label{goodsumhej} X(T^n)= \sum_{i=1}^{b^L}\frac {2N_i}{\mu^{-1}\ln{n_i}}-
 \sum_{i=1}^{b^L}\frac {\Upsilon(T_i)}{\mu^{-2}{\ln^2{n_i}}}\\
 -\frac {1}{\mu^{-1}\ln{n}} \sum_{i=1}^{b^L}N_i e^{-(\mu^{-1}\ln{n})\Lambda_i}
 +\frac {\sigma^2\alpha n}{\ln^2{n}}+
o_p\Big(\frac {n}{{\ln^2{n}}}\Big).
\end{multline} 
(Compare this with  \cite[equation(27)]{holmgren}.)

By choosing $\beta$ large enough we can sharpen the error term in (\ref{sum4}), i.e.,
\begin{align}\label{error}
\sum_{i=1}^{b^L}\mathbf{E}\Big(n_i^{2}\Big)=o\Big(\frac{n^2}{\ln ^k n}\Big),
\end{align}
for arbitrary large $k$.
Applying (\ref{error}), the variance result in (\ref{vertexball2}), and assuming (A3), Chebyshev's inequality results in
\begin{align}\label{error2}
\sum_{i=1}^{b^L}\frac{N_i}{\ln n_i}=
\sum_{i=1}^{b^L}\frac{\alpha n_i}{\ln n_i}+o_p\Big(\frac {n}{{\ln^2{n}}}\Big).
\end{align}
The third sum in  (\ref{goodsumhej}) is treated similarly.
For simplicity (in the calculations below) we change the notation 
$N_i,~1\leq i \leq b^L$, to $N_v,~d(v)=L$, and similarly for $n_i,~1\leq i \leq b^L$.
Hence, from (\ref{goodsumhej}), for $\beta$ large enough, we get
\begin{multline}\label{goodsum} X(T^n)= \sum_{i=1}^{b^L}\frac {2\alpha n_i}{\mu^{-1}\ln{n_i}}-
 \sum_{i=1}^{b^L}\frac {\Upsilon(T_i)}{\mu^{-2}{\ln^2{n_i}}}\\
 -\frac {1}{\mu^{-1}\ln{n}} \sum_{i=1}^{b^L}\alpha n_i e^{-(\mu^{-1}\ln{n})\Lambda_i}
 +\frac {\sigma^2\alpha n}{\ln^2{n}}+
o_p\Big(\frac {n}{{\ln^2{n}}}\Big).
\end{multline} 

\begin{Lemma}\label{lem4} Let  $L=\lfloor \beta \log_b\ln{n}\rfloor$ for some constant $\beta$,
\[\sum_{i=1}^{b^{L}} n_i e^{-(\mu^{-1}\ln{n})\Lambda_i}=\sum_{d(v) \leq L}n_v e^{-(\mu^{-1}\ln{n})\lambda _{v}}+o_p\Big(\frac {n}{{\ln{n}}}\Big).\]
 Thus, choosing $\beta>\frac{1}{-\log_b\mathbf{E}({V}^2)-1}$ from (\ref{goodsum}),
 \begin{multline*}
X(T^n)=\sum_{i=1}^{b^L}\frac {2\alpha n_i}{\mu^{-1}\ln{n_i}}-
 \sum_{i=1}^{b^L}\frac {\Upsilon(T_i)}{\mu^{-2}{\ln^2{n_i}}}\\
-\frac {1}{\mu^{-1}\ln{n}} \sum_{d(v)\leq L}\alpha n_v e^{-(\mu^{-1}\ln{n})\lambda _{v}}
+\frac {\sigma^2 \alpha n}{\ln^2{n}}+
o_p\Big(\frac {n}{{\ln^2{n}}}\Big).
\end{multline*}  
\end{Lemma}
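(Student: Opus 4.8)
The plan is to prove the identity $\sum_{i=1}^{b^L} n_i e^{-(\mu^{-1}\ln n)\Lambda_i}=\sum_{d(v)\leq L}n_v e^{-(\mu^{-1}\ln n)\lambda_v}+o_p\big(\tfrac{n}{\ln n}\big)$ by a telescoping/layering argument over the depths $0,1,\dots,L$, and then simply substitute it into (\ref{goodsum}) to read off the displayed approximation for $X(T^n)$. The second part is immediate once the first identity is in hand, so all the work is in the renewal-flavoured estimate, and I will organise the proof around that.

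First I would recall that $\Lambda_i=\min\{\lambda_v:v\in P(v_i)\}$ is the running minimum of the i.i.d.\ $\mathrm{Exp}(1)$ labels along the root-to-$v_i$ path, whereas $\lambda_{v_i}$ is just the label at $v_i$ itself. Write $\Lambda_i$ in terms of the label $\lambda_{v_i}$ and the running minimum $\Lambda_{v_i}^-$ over the \emph{strict} ancestors of $v_i$: $\Lambda_i=\min\{\lambda_{v_i},\Lambda_{v_i}^-\}$. The key point is that $e^{-(\mu^{-1}\ln n)\Lambda_i}$ differs from $e^{-(\mu^{-1}\ln n)\lambda_{v_i}}$ only on the event that some strict ancestor of $v_i$ has a label below $\lambda_{v_i}$, and on that event $e^{-(\mu^{-1}\ln n)\Lambda_i}\leq e^{-(\mu^{-1}\ln n)\Lambda_{v_i}^-}$, i.e.\ the mass is ``captured'' at the closest ancestor realizing the minimum. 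Summing the discrepancy over all the leaves $v_i$ and reorganizing the sum by which ancestor $u$ (with $d(u)\leq L$) first attains the path-minimum, one sees that $\big|\sum_i n_i e^{-(\mu^{-1}\ln n)\Lambda_i}-\sum_{d(v)=L} n_v e^{-(\mu^{-1}\ln n)\lambda_v}\big|$ is controlled, in expectation, by $\sum_{d(u)\leq L-1}\mathbf E\big(n_u\, e^{-(\mu^{-1}\ln n)\lambda_u}\big)$ up to combinatorial factors bounded by $L$; that is exactly (after adding back the internal layers) the content of the asserted identity, with the $o_p(n/\ln n)$ error coming from the $L=\Theta(\log\log n)$ factor together with $\mathbf E(e^{-(\mu^{-1}\ln n)\lambda_u})=\Theta(1/\ln n)$ and $\sum_{d(u)\leq L}\mathbf E(n_u)=O(n L)$.

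Concretely the estimate I would push through is: conditioning on the split vectors, $\mathbf E(n_v\mid\mathscr G_L)\leq n/b^{d(v)}+O(d(v))$ by (\ref{binomial}), and $\lambda_v\sim\mathrm{Exp}(1)$ is independent of the tree structure, so $\mathbf E\big(n_v e^{-(\mu^{-1}\ln n)\lambda_v}\big)=O\big(\tfrac{n}{b^{d(v)}\ln n}\big)$. Summing over a whole layer $d(v)=j$ gives $O(n/\ln n)$ per layer, hence $O(nL/\ln n)=o(n(\log\log n)^2/\ln n)$ over all $j\leq L$; this bounds $\sum_{d(v)\leq L-1}n_v e^{-(\mu^{-1}\ln n)\lambda_v}$ in $L^1$, and the Markov inequality converts the whole discrepancy — which is sandwiched between $0$ and this quantity times $O(L)$ by the ancestor-reorganization above — into an $o_p(n/\ln n)$ term. (One should also confirm that passing from ``depth $=L$'' to ``depth $\leq L$'' on the right-hand side only adds the already-controlled internal layers, which is why the identity has $\sum_{d(v)\leq L}$ rather than $\sum_{d(v)=L}$.) I expect the main obstacle to be the bookkeeping in the ancestor-reorganization step: one must argue carefully that the event ``$\Lambda_i<\lambda_{v_i}$'' contributes, after summing over $i$, no more than $O(L)$ copies of the single-vertex quantities $n_u e^{-(\mu^{-1}\ln n)\lambda_u}$, and that the subtree sizes $n_i$ telescope correctly up the tree (each ancestor $u$ at depth $j$ ``sees'' total descendant-ball-mass $\sum_{v_i\in T_u} n_i\leq n_u$); this is where one uses that the $n_v$ along a path are monotone and that $\sum_{d(v)=L}n_v\leq n$. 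Everything else is a routine combination of (\ref{binomial}), independence of the labels, and Markov's inequality, exactly as in the corresponding step \cite[equation~(28)]{holmgren} for the binary search tree.
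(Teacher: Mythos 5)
Your proposal mis-identifies what has to be estimated, and the error is not cosmetic but would cause the argument to prove a false statement. You try to show that the discrepancy
\[
\sum_{i=1}^{b^L}n_i e^{-m\Lambda_i}-\sum_{d(v)=L}n_v e^{-m\lambda_v}
\]
is $o_p(n/\ln n)$. That cannot be right. The Lemma asserts that this discrepancy equals the \emph{internal-layer sum} $\sum_{d(v)<L}n_v e^{-m\lambda_v}$ up to an $o_p(n/\ln n)$ error, and you yourself compute that the internal-layer sum has $L^1$ norm $\Theta(nL/\ln n)=\Theta(n\log\log n/\ln n)$ — a genuine main term, not an error. Your own numbers confirm the contradiction: bounding the discrepancy by $O(L)$ times a quantity whose expectation is $O(nL/\ln n)$ gives $O(nL^2/\ln n)$, and Markov's inequality applied to that does \emph{not} yield $o_p(n/\ln n)$. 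The remark you relegate to a parenthesis at the end — ``passing from depth $=L$ to depth $\leq L$ only adds the already-controlled internal layers'' — is in fact the entire content of the Lemma, not something to confirm afterwards, and the quantity you actually need to kill is the \emph{second-order} error, i.e.\ the difference between the discrepancy and $\sum_{d(v)<L}n_v e^{-m\lambda_v}$.

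Your decomposition $\Lambda_i=\min(\lambda_{v_i},\Lambda_{v_i}^{-})$ only produces a one-sided bound: since $e^{-m\Lambda_i}=\max_{v\in P(v_i)}e^{-m\lambda_v}\leq\sum_{v\in P(v_i)}e^{-m\lambda_v}$ and $\sum_{i:v\in P(v_i)}n_i\leq n_v$, one gets $\sum_i n_i e^{-m\Lambda_i}\leq\sum_{d(v)\leq L}n_v e^{-m\lambda_v}$, with no matching lower bound. The paper instead works with the full order statistics $\Lambda_i^j$ of the path labels, using the exact identity $\sum_{v\in P(v_i)}e^{-m\lambda_v}-e^{-m\Lambda_i}=\sum_{j\geq2}e^{-m\Lambda_i^j}$. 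The error term one then has to control is $\sum_i n_i\sum_{j\geq2}e^{-m\Lambda_i^j}$ (together with the deficit $0\leq n_v-\sum_{i:v\in P(v_i)}n_i\leq sb^L$). This is handled in two pieces: for $j>\lfloor\beta\rfloor$, a union bound shows that w.h.p.\ at most $\lfloor\beta\rfloor$ labels on each path lie below $a=2\ln m/m$, so the tail is at most $L e^{-ma}=L/m^2$; for $2\leq j\leq\lfloor\beta\rfloor$, one computes $\mathbf{E}\big(e^{-m\Lambda_i^2}\big)=\mathcal O(L^2/m^2)$ directly from the distribution of the second order statistic, giving $\sum_i n_i\sum_{j=2}^{\lfloor\beta\rfloor}e^{-m\Lambda_i^j}=\mathcal O_{L^1}(nL^2/m^2)=o_p(n/\ln n)$. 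This second-order estimate is entirely missing from your proposal and cannot be replaced by the upper bound you sketch.
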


\begin{proof} Recall that we write $m:=\mu^{-1}\ln{n}$, and  $\Lambda_i$ for the minimum of the $L+1$ i.i.d.\ random variables $\lambda_v,~v\in P(v_i)=\{\sigma,\dots,v_i\}$, where $P(v_i)$ is the path from the root $\sigma$ to $v_i$. Thus, $e^{-m\Lambda_i}$ is the maximum. 
Now we define $\Lambda_i^{j}$ as the $j$-th smallest value in $\{\lambda_v,~v\in P(v_i)\}$, so that  $e^{-m\Lambda_i^{j}}$ is the $j$-th maximum.
Note in particular that $\Lambda_i^{1}=\Lambda_i$.
Choosing $a=\frac{2\ln{m}}{m}$  gives that for some $i$, the probability that 
at least $\lfloor\beta\rfloor+1$ of the $\lambda _{v}$'s, $v~\in~P(v_i)$,  are less than $a$ is \begin{equation*}\mathcal O \Big(b^L L^{\lfloor\beta\rfloor+1} a^{\lfloor\beta\rfloor+1}\Big)= \mathcal O \Big(\frac{b^L\ln^{2(\lfloor\beta\rfloor+1)} m}{m^{\lfloor\beta\rfloor+1}}\Big)=o(1).\end{equation*} Thus, with probability tending to 1, there are at most $\lfloor\beta\rfloor$ values $\lambda _{v}$  less than $a$ in each $P(v_i)$, giving for each $i$,
\begin{equation*} 0\leq \sum_{v\in P(v_i)}e^{-m\lambda _{v}}-\sum_{j=1}^{\lfloor\beta\rfloor}e^{-m\Lambda_i^{j}}\leq (L-\lfloor\beta\rfloor)e^{-ma}=\frac{L-\lfloor\beta\rfloor}{m^2}.
\end{equation*} 
Hence, using that $n_v-sb^L\leq\sum_{i:v\in P(v_i)}n_i\leq n_v$,
\begin{align*} \sum_{i=1}^{b^L}n_i\sum_{j=1}^{\lfloor\beta\rfloor}e^{-m\Lambda_i^{j}}&= \sum_{i=1}^{b^L}n_i\sum_{v\in P(v_i)}e^{-m\lambda _{v}}+o_p\Big(\frac {n}{{\ln{n}}}\Big)
\nonumber\\&=\sum_{d(v)\leq L}e^{-m\lambda _{v}}\sum_{i:v\in P(v_i)}n_i+o_p\Big(\frac {n}{{\ln{n}}}\Big)
\nonumber\\&=\sum_{d(v)\leq L}n_v e^{-m\lambda _{v}}+
o_p\Big(\frac {n}{{\ln{n}}}\Big).
\end{align*}
Observing that the second smallest value $\Lambda_{i}^{2}$ in $i:v\in P(v_i)$, is at most $x$ 
if at least two $\lambda_v$ are at most $x$, and using that the $\lambda_v$'s are i.i.d.\, we calculate the distribution function of  $\Lambda_{i}^{2}$ as
\begin{align*}
\mathbf{P}\Big(\Lambda_{i}^{2}\leq x\Big)&=1-\mathbf{P}(\lambda_v>x)^L-L\mathbf{P}(\lambda_v>x)^{L-1}\mathbf{P}(\lambda_v\leq x)
\nonumber\\&=1-e^{-Lx}-Le^{-(L-1)x}(1-e^{-x}).
\end{align*}
Hence,
\begin{align*}
\mathbf{E}\Big(e^{-m\Lambda_i^{2}}\Big)&=\int_{0}^{\infty}e^{-mx}\Big((L-L^2)e^{-Lx}+L(L-1)e^{-(L-1)x}\Big)dx
\nonumber\\&=\frac{L-L^2}{m+L}+\frac{L^2-L}{m+L-1}=\mathcal{O}\Big(\frac{L^2}{m^2}\Big),
\end{align*}
implying
\begin{align*}\sum_{i=1}^{b^L}n_i\sum_{j=2}^{\lfloor\beta\rfloor} e^{-m \Lambda_i^{j}}=\mathcal{O}_{L^1}(\frac{nL^2}{m^2}).
\end{align*}
Thus, the Markov inequality gives
\begin{align*}\sum_{i=1}^{b^{L}} n_i e^{-(\mu^{-1}\ln{n})\Lambda_i}=\sum_{d(v) \leq L}n_v e^{-(\mu^{-1}\ln{n})\lambda _{v}}+o_p\Big(\frac {n}{{\ln{n}}}\Big).
\end{align*}

\end{proof} 

 Thus, from Lemma \ref{lem4} (where $\beta$ is chosen large enough),  by applying (\ref{error2}) and the total path length result in (\ref{TPL}) we get \begin{multline}\label{nicesum3}
  X(T^n)=\sum_{d(v)=L}\frac {\alpha n_v}{\mu^{-1}{\ln{n_v}}}
-\frac {1}{\mu^{-1}\ln{n}} \sum_{d(v)\leq L}\alpha n_v e^{-(\mu^{-1}\ln{n})\lambda _{v}}-\frac{\zeta n}{\mu^{-2}\ln^2{n}}\\
 +\frac {\alpha n\sigma^2}{\ln^2{n}}+
o_p\Big(\frac {n}{{\ln^2{n}}}\Big).
\end{multline}

As in \cite{jan1} and \cite{holmgren} the proof of Theorem \ref{thm}, i.e., the main theorem, will be completed by a classical theorem 
for convergence of triangular arrays to infinitely divisible distributions, see e.g., \cite[Theorem 15.28]{kallenberg}. 
First we recall the definition of \begin{align}\label{xi}\xi_v:=\frac{mn_v}{n}e^{-m\lambda_v}                                  \end{align}
in Section \ref{notation}.
Normalizing $X(T^n)$ gives by using (\ref{nicesum3}),
\begin{align}\label{mainthm4}&\frac{\mu^{-2}\ln^2{n}}{\alpha n}\left(X(T^n)-\frac{\alpha n}{\mu^{-1}\ln{n}} 
-\frac{\alpha n\ln\ln{n}}{\mu^{-1}\ln^2{n}}+\frac{\zeta n}{\mu^{-2}\ln^2{n}}\right)\nonumber\\&=-\sum_{d(v)\leq L}\xi_v+\frac{\mu^{-2}\ln^2{n}}{n}\sum_{d(v)= L}
\frac{n_v}{\mu^{-1}\ln{n_v}}-\mu^{-1}\ln\ln{n}\nonumber\\&~~~~~~~~~~~~~-\mu^{-1}\ln{n}+\mu^{-2}\sigma^2+o_p(1).
\end{align} 

Let \begin{align} \label{D} D:=\frac{\mu^{-2}\ln^2{ n}}{ n}\sum_{d(v)=L}\frac{ n_v}{\mu^{-1}\ln{ n_v}}-\mu^{-1}\ln\ln{ n}-\mu^{-1}\ln{ n}+\mu^{-2}\sigma^2.
\end{align}
and $\xi_i^{'}=\frac{-D}{n}$.
Thus,
\begin{align}\label{mainthm4,1}&\frac{\mu^{-2}\ln^2{n}}{\alpha n}\left(X(T^n)-\frac{\alpha n}{\mu^{-1}\ln{n}} 
-\frac{\alpha n\ln\ln{n}}{\mu^{-1}\ln^2{n}}+\frac{\zeta n}{\mu^{-2}\ln^2{n}}\right)\nonumber\\&=-\sum_{d(v)\leq L}\xi_v-\sum_{i=1}^{n}\xi_i^{'}+o_p(1).
\end{align}

As in \cite{holmgren} since the $n_v$'s in the sums in (\ref{mainthm4}) are not independent 
(although they are less dependent for vertices that are far from each other),
$\{\xi_v\}\bigcup \{\xi_i^{'}\}$ is not a triangular array. Recall the definition of $\Omega_L$ as the $\sigma$-field generated by $\{n_v,~d(v)\leq L\}$.
Hence, conditioned on $\Omega_L$, $\{\xi_v\}\bigcup \{\xi_i^{'}\}$ is a triangular array with $\xi_i^{'}$ conditioned on $\Omega_L$ deterministic.

\subsection{ Applying a limit theorem for sums of triangular arrays}

\subsubsection{Theorem \ref{thm3} which proves Theorem \ref{thm}}
As in \cite{jan1} and \cite{holmgren}, the proof of Theorem \ref{thm} will be completed by a classical theorem 
for convergence of sums of triangular arrays to infinitely divisible distributions, see e.g., \cite[Theorem 15.28]{kallenberg}. 
 For the sake of independence we intend to
condition on the $n_v$'s in the sums in (\ref{mainthm4,1}).
We show that conditioned on the $n_v$'s we get convergence in distribution for the normalized $X(T^n)$ to a random variable $W$ with an infinitely divisible distribution, which is not depending on the $n_v$'s we conditioned on. Then it follows in the same way as in \cite{holmgren} that also unconditioned the normalized $X(T^n)$ 
 converges in distribution to $W$.
The main Theorem \ref{thm} is proven by Theorem \ref{thm3} below.

\begin{thm}\label{thm3}
Choose any constant $c>0$ and let $n\rightarrow\infty$. 
Conditioning on the $\sigma$-field  $\Omega_L$, where $L=\lfloor \beta \log_b\ln n\rfloor$, if the constant $\beta$ is chosen large enough the following hold:
\begin{align*}&(i)~~\sup_v\mathbf{P}\big(\xi_v>x\big|\Omega_L\big)\longrightarrow 0~~\text{for~every}~x>0,\\&
(ii)~~\Delta_1:=\sum_{d(v)\leq L}\mathbf{P}\big(\xi_v>x\big|\Omega_L\big) \stackrel{p}\longrightarrow \nu(x,\infty)=\frac{\mu^{-1}}{x}
~~\text{for~every}~x>0,\\&(iii)~~\Delta_2:=\sum_{d(v)\leq L}\mathbf{E}\big(\xi_v\mathbf{1} [ \xi_v\leq c ]\big|\Omega_L \big)-
\frac{\mu^{-2}\ln^2{n}}{ n}\sum_{d(v)= L}\frac{ n_v}{\mu^{-1}\ln{n_v}}+\mu^{-1}\ln\ln{n}\\&~~~~+\mu^{-1}\ln{n}-\mu^{-2}\sigma^2\\&~~~~~~~~~~~~~~~~~~~~~~~~~~~\stackrel{p}
\longrightarrow-\mu^{-1}\ln\mu^{-1}+\mu^{-1}-\mu^{-2}\sigma^2-\frac{\sigma^2-\mu^2}{2\mu^2}+\mu^{-1}\ln{c},\\&(iv)~~
\Delta_3:=\sum_{d(v)\leq L}\mathbf{Var}\big(\xi_v\mathbf{1} [ \xi_v\leq c ]
\big|\Omega_L\big)
 \stackrel{p}\longrightarrow \mu^{-1}c.\end{align*}
\end{thm}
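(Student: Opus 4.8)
The plan is to verify the four conditions of the classical triangular-array limit theorem \cite[Theorem 15.28]{kallenberg} for the array $\{\xi_v\}_{d(v)\le L}\cup\{\xi_i'\}_{1\le i\le n}$ conditioned on $\Omega_L$. Since each $\xi_i'=-D/n$ is deterministic given $\Omega_L$ and tends to $0$ uniformly, the $\xi_i'$ contribute nothing to (i), (ii) and the variance in (iv); their only effect is a deterministic shift, which is precisely why the centering term $D$ has been split off. So everything reduces to analyzing the $\xi_v=\frac{mn_v}{n}e^{-m\lambda_v}$ with $m=\mu^{-1}\ln n$.

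First I would record the key conditional computation: given $\Omega_L$ (hence given $n_v$), since $\lambda_v\sim\mathrm{Exp}(1)$ we have $\mathbf{P}(\xi_v>x\mid\Omega_L)=\mathbf{P}\big(e^{-m\lambda_v}>\frac{nx}{mn_v}\big)=\big(\frac{mn_v}{nx}\big)^{1/m}$ whenever $\frac{mn_v}{nx}\ge 1$, and $0$ otherwise. For (i), the number of $v$ with $n_v\ge$ a given positive fraction of $n$ is bounded (at depth $\le L$), and for those $\big(\frac{mn_v}{nx}\big)^{1/m}\to 1$ is false — rather one must observe $n_v/n\le 1$ so $\big(\frac{mn_v}{nx}\big)^{1/m}\le (m/x)^{1/m}\to 1$; the correct bound comes from noting $\sup_v n_v = o(n)$ w.h.p. by the depth/subtree-size estimates of Section \ref{Subtrees}, and more carefully that for any fixed $x$ the probability is $\big(\frac{mn_v}{nx}\big)^{1/m}$ which, raised to the power $1/m\to 0$, tends to $1$ only if $mn_v/n$ stays bounded away from $0$; since one can show $\max_v n_v/n\to 0$ in probability (most balls sit far below depth $L$ — indeed $n_v\le\mathrm{Binomial}(n,\prod W_{r,v})+s_1 L$ and $\prod_{r=1}^{L}W_r\to 0$), we get $mn_v/n\to 0$ uniformly, hence $\big(\frac{mn_v}{nx}\big)^{1/m}\to 0$. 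For (ii) I would write $\Delta_1=\sum_{d(v)\le L}\big(\frac{mn_v}{nx}\big)^{1/m}\mathbf{1}[mn_v\ge nx]$ and, using $t^{1/m}=e^{\frac{1}{m}\ln t}=1+\frac{\ln t}{m}+O(m^{-2}\ln^2 t)$, reduce it to $\frac{1}{m}\sum_v\big(\ln\frac{mn_v}{n}-\ln x\big)$ plus lower order, then identify this sum via the renewal/total-path-length machinery; the expectation over the split vectors of $\sum_{d(v)\le L}\ln(n_v/n)$ is controlled by $U(t)$ and the estimate $\mathbf{E}(\sum_{d(v)=k}1)=b^k$, and one checks the whole thing concentrates (Chebyshev, using the weak dependence of the $n_v$'s) around $\frac{\mu^{-1}}{x}$.

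Next, (iii) and (iv) are the truncated-moment conditions. For a single $v$, $\mathbf{E}(\xi_v\mathbf{1}[\xi_v\le c]\mid\Omega_L)=\int_0^c \mathbf{P}(s<\xi_v\le c\mid\Omega_L)\,ds$ which, after substituting the explicit tail $\big(\frac{mn_v}{ns}\big)^{1/m}$, evaluates to an elementary expression in $n_v$, $m$, $c$; summing over $d(v)\le L$ and expanding in powers of $1/m$ produces exactly the three pieces appearing on the left of (iii): a term $\frac{m}{n}\sum_{d(v)=L}\frac{n_v}{\mu^{-1}\ln n_v}$-type contribution, a $\mu^{-1}\ln\ln n$ piece and a $\mu^{-1}\ln n$ piece, which is why the statement is written as a cancellation. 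After these cancel, the surviving constant is assembled from $\mathbf{E}(-V\ln V)$, $\mathbf{E}(V\ln^2 V)$ — i.e. $\mu$ and $\sigma^2$ — together with the constant $\frac{\sigma^2-\mu^2}{2\mu^2}$ coming from \eqref{V}, the Corollary 3.2 estimate on $W(x)$. The analogous but simpler computation $\mathbf{E}(\xi_v^2\mathbf{1}[\xi_v\le c]\mid\Omega_L)=\int_0^{c^2}\mathbf{P}(\xi_v^2>s,\xi_v\le c\mid\Omega_L)\,ds$ gives, to leading order, $\sum_v\frac{m}{n}n_v\cdot c\cdot\frac{1}{m}\cdot(\text{const})$; using $\sum_{d(v)\le L}n_v\approx$ (a bounded multiple of $n$) after accounting for overlaps, one gets $\Delta_3\to\mu^{-1}c$, with the subtracted squared mean being negligible since each $\mathbf{E}(\xi_v\mathbf{1}[\xi_v\le c]\mid\Omega_L)=O(1/m)$ and there are $b^L=O(\ln^\beta n)$-ish many but weighted by $n_v/n$.

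The main obstacle will be step (iii): extracting the precise constant requires a delicate asymptotic expansion of $\sum_{d(v)\le L}$ of an $n_v$-dependent quantity to second order in $1/m$, and at that order one genuinely needs the sharp renewal estimate \eqref{renewal equation3} together with the refined constant from \eqref{V} (not merely $U(t)\sim\mu^{-1}e^t$), plus a concentration argument — via Chebyshev and the near-independence of $n_v$'s at well-separated vertices (the $\mathscr{G}_L$-measurability up to small errors and the second-moment bounds \eqref{subtree2,2}) — to upgrade the expectation computation to convergence in probability. Conditions (i), (ii) and (iv) are comparatively routine once the explicit conditional tail formula is in hand; the bookkeeping of the $\ln\ln n$ and $\ln n$ terms and their cancellation against $D$ is the other place where care is needed but it is mechanical. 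Once all four conditions hold, \cite[Theorem 15.28]{kallenberg} yields that $-\sum\xi_v-\sum\xi_i'$ converges, conditionally on $\Omega_L$ hence (the limit being non-random) also unconditionally, to the infinitely divisible law with L\'evy density $\mu^{-1}/x^2$ on $(0,\infty)$ and the stated drift, which by \eqref{mainthm4,1} is exactly the assertion of Theorem \ref{thm}.
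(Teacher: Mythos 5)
Your overall plan (compute conditional expectations via the renewal function $U(t)$ and the constant from $W(x)$, then show concentration via Chebyshev, then feed the four verified conditions into \cite[Theorem 15.28]{kallenberg}) matches the structure of the paper's proof. But there are two concrete problems with the proposal as written.

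\textbf{The conditional tail formula is wrong, and the error is not cosmetic.} Since $\lambda_v\sim\mathrm{Exp}(1)$ and $\xi_v=\frac{mn_v}{n}e^{-m\lambda_v}$, one has
\begin{align*}
\mathbf{P}\big(\xi_v>x\mid\Omega_L\big)
= \mathbf{P}\Big(\lambda_v<\tfrac{1}{m}\ln_+\tfrac{mn_v}{nx}\Big)
= 1-\exp\Big(-\tfrac{1}{m}\ln_+\tfrac{mn_v}{nx}\Big)
= 1-\Big(\tfrac{mn_v}{nx}\Big)^{-1/m},
\end{align*}
not $\big(\frac{mn_v}{nx}\big)^{1/m}$ as you wrote. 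Your expression is off by a constant $1$: $\big(\frac{mn_v}{nx}\big)^{1/m}\to 1$ while the true probability $\approx \frac{1}{m}\ln_+\frac{mn_v}{nx}\to 0$. This is already fatal for your argument for condition $(i)$: you observe (correctly, given your formula) that $\big(\frac{mn_v}{nx}\big)^{1/m}\le (m/x)^{1/m}\to 1$ and then try to rescue the bound by claiming $\max_{d(v)\le L} n_v/n\to 0$ in probability, but that is false --- the root has $n_\sigma=n$, so the supremum is exactly $1$. With the correct formula, $(i)$ is immediate: $\mathbf{P}(\xi_\sigma>x\mid\Omega_L)\approx\frac{\ln m-\ln x}{m}\to 0$ and this is the worst case. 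Your subsequent step for $(ii)$ expands $t^{1/m}=1+\frac{\ln t}{m}+\dots$ and then quietly works with $\frac{1}{m}\sum_v(\ln\frac{mn_v}{n}-\ln x)$ --- the right quantity --- which only works because you dropped the leading ``$1$''s, i.e.\ because you implicitly reverted to the correct formula. You should fix the formula up front; otherwise $\Delta_1$ picks up a spurious count of all vertices with $mn_v>nx$, which is not $o(1)$.

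\textbf{The concentration step is not ``mechanical'' and is missing its key ideas.} You describe the passage from expectations to convergence in probability as Chebyshev plus ``weak dependence of the $n_v$'s'', and call $(ii)$, $(iv)$ ``comparatively routine''. In the paper this is the bulk of the work and requires two devices you do not articulate. First, one replaces $n_v$ by the multiplicative idealization $\widehat{n_v}=n\prod_{r=1}^{d(v)}W_{r,v}$ and conditions on the split-vector $\sigma$-field $\mathscr G_L$ rather than on $\Omega_L$; this is Lemma~\ref{lem5}, justified by the binomial concentration \eqref{iliada1}, and it is what makes the summands tractable. Second, and more importantly, the $\widehat{\xi_v}$'s conditioned on $\mathscr G_L$ are still far from independent (any two vertices share the split vectors of their common ancestors), so a direct second-moment bound on $\sum_v$ does not close. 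The paper introduces an intermediate level $l=\lfloor\frac{\log_b\ln n}{2}\rfloor$, truncates the sums to depths in $[l,L]$ (Lemmas~\ref{lem7}, \ref{lem8}), and applies the decomposition $\mathbf{Var}(S)=\mathbf{E}\,\mathbf{Var}(S\mid\mathscr G_l)+\mathbf{Var}\,\mathbf{E}(S\mid\mathscr G_l)$: conditioning on $\mathscr G_l$ makes subtrees below level $l$ independent (Lemma~\ref{lem8}), and the conditional expectation given $\mathscr G_l$ is shown to be nearly deterministic because $\sum_{d(v_i)=l}\prod_{r\le l}W_{r,v_i}=1$ (Lemma~\ref{lem7}). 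There is also a large-deviation bound \eqref{largedev}--\eqref{largedev3} needed to extend the truncated sums $\sum_{k\le L}$ to the full renewal series $\sum_{k\ge 1}$ before invoking \eqref{renewal equation3}. None of these appear in your outline, and without the two-level conditioning it is genuinely unclear how your Chebyshev argument would control the covariances between $\widehat{\xi_v}$ and $\widehat{\xi_w}$ for vertices with a low common ancestor. You have identified the right ingredients for the expectation computation (renewal asymptotics, the $W(x)$ constant), but the variance control is the hard part and it is not addressed.
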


Before proving Theorem \ref{thm3} we will show how it proves Theorem \ref{thm}. 
Recall from (\ref{D}) that
\begin{align*} D=\frac{\mu^{-2}\ln^2{ n}}{ n}\sum_{d(v)=L}\frac{ n_v}{\mu^{-1}\ln{ n_v}}-\mu^{-1}\ln\ln{ n}-\mu^{-1}\ln{ n}+\mu^{-2}\sigma^2.
\end{align*}
We apply \cite[Theorem 15.28]{kallenberg}  with 
\begin{align} \label {b} a=0,~ ~~b=-\mu^{-1}\ln\mu^{-1}+\mu^{-1}-\mu^{-2}\sigma^2-\frac{\sigma^2-\mu^2}{2\mu^2}
\end{align} to $\sum_{d(v)\leq L}\xi_v+\sum_{i=1}^{n}\xi_i^{'}$ conditioned on $\Omega_L$ with  $\xi_i^{'}=\frac{-D}{n}$ deterministic. 
The constants $a$ and $b$ are the constants that 
occur in the general formula of the characteristic function for infinitely divisible distributions 
in (\ref{characteristic}).
Note that $\frac{D}{n}\rightarrow 0$, thus because of $(i)$, conditioned on $\Omega_L$, $\{\xi_v\}\bigcup \{\xi_i^{'}\}$ is a null array. 

We define $S(n):=\sum_{d(v)\leq L}\xi_v+\sum_{i=1}^{n}\xi_i^{'}$.
From ($ii$) we have that $\frac{d\nu}{dx}=\frac{\mu^{-1}}{x^2}$, hence
\begin{align*}&\int_{0}^{c}x^2 d\nu(x)=\int_{0}^{c}\mu^{-1}dx=\mu^{-1}c~\mathrm{and}~\int_{c}^{1}x d\nu(x)=\int_{c}^{1}\frac{\mu^{-1}}{x}dx=-\mu^{-1}\ln{c}.
\end{align*}
Thus, the right hand-sides of (\textit{iii}) and (\textit{iv}) are $b-\int_{c}^{1}x d\nu(x)$ and $\int_{0}^{c}x^2 d\nu(x)$, respectively,  where $b$ is the constant in (\ref{b}). 
The convergence in Theorem \ref{thm3} is in the probabilistic sense, while \cite[Theorem 15.28]{kallenberg} requires 
usual convergence, i.e., standard point-wise convergence of sequences with no probability involved. However, if the convergence instead were a.s.\ in Theorem \ref{thm3}, then it would have been easy to see from this theorem
that conditionally on $\Omega_L$ the conditions of \cite[Theorem 15.28]{kallenberg} are fullfilled for $S(n)$. Thus, assuming a.s.\ convergence in Theorem \ref{thm3}, \cite[Theorem 15.28]{kallenberg} implies that conditioned on $\Omega_L$,
\begin{align}\label{subsub1}&
S(n) \stackrel{d}\rightarrow W,~~as~n\rightarrow \infty,\end{align} where $W$ has an infinitely divisible distribution (in particular a weakly 1-stable distribution in this case)  with characteristic function 
\begin{align*}\mathbf{E}\Big(e^{itW}\Big)=\exp\left(itb+
\int_{0}^{\infty}(e^{itx}-1-itx\mathbf{1}[x<1])d\nu(x)\right);
\end{align*}
this is (\ref{mainthm2}) in Remark \ref{rem3} (since $b=C+\mu^{-1}(\gamma -1)$) which can be simplified to (\ref{mainthm3}) in Theorem \ref{thm}.

It follows from (\ref{subsub1}) that conditioning on $\Omega_L$ has no influence on the distributional convergence of $S(n)$ (unconditioned), since for any continuous bounded function $g:\mathbb R\rightarrow\mathbb R$,
\begin{align*} \mathbf{E}\Big(g(S(n))\mid \Omega_L\Big)=\int gdF(S(n)\mid \Omega_L))\stackrel{n\rightarrow \infty}\longrightarrow \mathbf{E}\Big(g(W)\Big).\end{align*} 
Thus, taking expectation by dominated convergence \begin{align*}\mathbf{E}\Big(g(S(n))\Big)\stackrel{n\rightarrow \infty}\longrightarrow\mathbf{E}\Big(g(W)\Big).\end{align*}
 This shows that also unconditioned $S(n) \stackrel{d}\rightarrow W$.
 Thus, unconditioned the normalized $X(T^n)$ in (\ref{mainthm4}) converges in distribution to $-W$.

It remains to show that convergence in probability (which is the type of convergence in Theorem \ref{thm3}) actually is sufficient for  $S(n) \stackrel{d}\rightarrow W$ to hold. In \cite{holmgren} we proved this fact for the binary search tree in two ways, in one by using subsequences and in the other one by using Skorohod's coupling theorem, see e.g., \cite[Theorem 3.30]{kallenberg}. By analogy these proofs also work for general split trees.
Thus, the proof of Theorem \ref{thm} for $X_v(T)$ is completed. 

Now it follows easily, by the same type of argument as for the binary search tree \cite{holmgren} that the result holds for $X_e(T)$ too. One way to see this is to consider $\widehat{T}$ as the tree $T$ with the root deleted. Then there is a natural 1-1 correspondence between edges of $T$
and vertices of $\widehat{T}$, and this correspondence also preserves the record (and cutting)
 operations. Since it is very unlikely that the root value would decide if values at high levels are records or not, it follows that asymptotically $X_e(T)$ and $X_v(T)$ have the same distribution.
Thus, the proof of Theorem \ref{thm} is completed.

The idea of the proof of Theorem \ref{thm3} is as for the binary search tree \cite[Theorem 2.1]{holmgren} to use Chebyshev's inequality to prove $(ii)$, $(iii)$ and $(iv)$ of Theorem \ref{thm3} ($(i)$ is very easy to prove). For the binary search tree  we frequently used in \cite[Theorem 2.1]{holmgren} that the sum $\sum_{r=1}^{k}\ln U_r$, where  $U_r$ are uniform $U(0,1)$ random variables, is distributed as a $-\Gamma(k,1)$ random variable. For general split trees, the solution of the renewal function $U(t)$ in (\ref{renewal equation3}) is fundamental for the proof of Theorem \ref{thm3}.

\subsubsection{Lemmas for the Proof of Theorem \ref{thm3}}

Recall that we write $\Omega_j$ for the $\sigma$-field generated by $\{n_v,~d(v)\leq j\}$ and $\mathscr{G}_j$ for
the $\sigma$-field generated by 
$\{W_{r,v},~r\in \{1,2\dots,j\}\}$, for all vertices $v$ with $d(v)=j$. Also recall that we write $L=\lfloor \beta \log_b\ln n\rfloor$.
We also write \begin{align}\label{nice}\widehat{n_v}:= n\prod_{r=1}^{k}W_{r,v},~~\mathrm{and}~~\widehat{\xi_v}:=\frac{m\widehat{n_v}}{n}e^{-m\lambda_v},
\end{align} 
where $m:=\mu^{-1}\ln{n}$.
Note that $\mathscr{G}_j$ is thus equivalently the $\sigma$-field generated by $\{\widehat{n_v}:d(v)\leq j\}$.

We present below four crucial lemmas by which we can then easily prove Theorem \ref{thm3}.

\begin{Lemma}\label{lem5}Suppose that $n\rightarrow\infty$ and choose any constant $c>0$. Then for $L=\lfloor \beta \log_b\ln n\rfloor$ and $\beta$ large enough, the following hold
 \begin{align*}& 
\sum_{d(v)\leq L}\mathbf{P}\big(\xi_v>x\big|\Omega_L\big)=\sum_{d(v)\leq L}
\mathbf{P}\big(\widehat{\xi_v}>x\big|\mathscr G_{L}\big)+o_p(1),\\& \sum_{d(v)\leq L}\mathbf{E}\big(\xi_v\mathbf{1} 
[ \xi_v\leq c ]\big|\Omega_L\big)=\sum_{d(v)\leq L}\mathbf{E}\big(\widehat{\xi_v}\mathbf{1} 
[ \widehat{\xi_v}\leq c ]\big|\mathscr G_{L} \big)+o_p(1),\\&\sum_{d(v)= L}\frac{n_{v}}{\mu^{-1}\ln{n_v}}=
\frac{n}{\mu^{-1}\ln{n}}
-\sum_{d(v)=L}\frac{\widehat{n_v}\ln(\frac{\widehat{n_v}}{n})}{\mu^{-1}\ln^2{n}}+o_p(\frac{n}{\ln ^2 n}),
\\&\sum_{d(v)\leq L}\mathbf{Var}\big(\xi_v\mathbf{1} [ \xi_v\leq c ]\big|
\Omega_L\big)=\sum_{d(v)\leq L}\mathbf{Var}\big(\widehat{\xi_v}\mathbf{1} [ \widehat{\xi_v}\leq c ]\big|
\mathscr G_{L}\big)+o_p(1).
\end{align*}
\end{Lemma}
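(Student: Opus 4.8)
The plan is to prove all four identities of Lemma \ref{lem5} by a single unified argument: show that replacing the true subtree size $n_v$ by its multiplicative proxy $\widehat{n_v}:=n\prod_{r=1}^{k}W_{r,v}$ (equivalently, replacing $\xi_v$ by $\widehat{\xi_v}$) costs only $o_p$ of the relevant order. The key input is the stochastic sandwich from Section \ref{Subtrees}: conditionally on the split vectors, for a vertex $v$ at depth $d(v)=k$ we have
\begin{align*}
\mathrm{Binomial}\Big(n,\prod_{r=1}^{k}W_{r,v}\Big)-sk < n_v \leq \mathrm{Binomial}\Big(n,\prod_{r=1}^{k}W_{r,v}\Big)+s_1k,
\end{align*}
so that, writing $p_v:=\prod_{r=1}^{k}W_{r,v}$ and recalling $\widehat{n_v}=np_v$, one has $n_v=\widehat{n_v}+R_v$ where $\mathbf{E}((R_v)^2\mid \mathscr G_L)=\mathcal O(np_v)+\mathcal O(k^2)$ by the binomial variance. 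Since we only ever sum over $d(v)\le L$ with $L=\lfloor\beta\log_b\ln n\rfloor$, there are at most $\mathcal O(b^L)=\mathcal O(\ln^{\beta\log_b b}n)$ — a polylog number of — vertices, and $\sum_{d(v)=k}p_v$ has conditional expectation $1$ for each $k$, so $\sum_{d(v)\le L}\mathbf{E}(R_v^2\mid\mathscr G_L)=\mathcal O(nL)+\mathcal O(b^L L^2)=\mathcal O(n\ln\ln n)$.

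First I would handle the third identity, which is the cleanest and sets the template. Expand $\frac{1}{\mu^{-1}\ln n_v}$ around $\frac{1}{\mu^{-1}\ln n}$: writing $\ln n_v=\ln n+\ln(n_v/n)$ and $\ln n_v = \ln\widehat{n_v}+\ln(1+R_v/\widehat{n_v})$, a first-order Taylor expansion gives $\frac{n_v}{\mu^{-1}\ln n_v}=\frac{n}{\mu^{-1}\ln n}\sum_{d(v)=L}\text{(terms)}$ — more precisely one gets $\sum_{d(v)=L}\frac{n_v}{\mu^{-1}\ln n_v}=\sum_{d(v)=L}\frac{\widehat{n_v}}{\mu^{-1}\ln n}-\sum_{d(v)=L}\frac{\widehat{n_v}\ln(\widehat{n_v}/n)}{\mu^{-1}\ln^2 n}+(\text{error})$, and since $\sum_{d(v)=L}\widehat{n_v}=n\sum_{d(v)=L}p_v$ concentrates around $n$ with fluctuation controlled by $\mathbf{Var}(\sum p_v\mid\cdot)$, which is itself $o(1)$ once $\beta$ is large (this is exactly the kind of estimate behind \eqref{error}), the first term becomes $\frac{n}{\mu^{-1}\ln n}+o_p(n/\ln^2 n)$. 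The error from $R_v$ is, by Cauchy--Schwarz, at most $\frac{1}{\mu^{-1}\ln n}\sum|R_v|+\frac{C}{\ln^2 n}\sum\frac{|R_v|(1+|\ln(\widehat n_v/n)|)}{\cdots}$, and using $\sum|R_v|\le (b^L\sum\mathbf{E}R_v^2)^{1/2}=\mathcal O(\sqrt{n\ln\ln n}\,\mathrm{polylog})=o(n/\ln^2 n)$, this is $o_p(n/\ln^2 n)$ as required; the logarithmic factors $\ln(\widehat n_v/n)$ are harmless because, as noted in Section \ref{nodes}, vertices with $\widehat n_v$ much smaller than $n/\mathrm{polylog}$ contribute negligibly (this is the bound \eqref{bound}).

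For the first, second, and fourth identities the strategy is identical: $\xi_v=\frac{mn_v}{n}e^{-m\lambda_v}$ and $\widehat{\xi_v}=\frac{m\widehat n_v}{n}e^{-m\lambda_v}$ differ by $\frac{mR_v}{n}e^{-m\lambda_v}$, so $|\xi_v-\widehat\xi_v|\le \frac{m|R_v|}{n}$. Conditionally on the $\sigma$-fields in question, $\lambda_v$ is still $\mathrm{Exp}(1)$ (true sizes) respectively $\widehat\xi_v$ is a simple function of $\widehat n_v$ times an $\mathrm{Exp}(1)$, so for the tail probabilities one writes $|\mathbf{P}(\xi_v>x\mid\Omega_L)-\mathbf{P}(\widehat\xi_v>x\mid\mathscr G_L)|$ and bounds it by the probability that $\lambda_v$ lies in a short interval of length $\mathcal O(|R_v|/(mn_v))$ near $\frac1m\ln(mn_v/(nx))$, which is $\mathcal O(|R_v|/(mn_v))$ — wait, more carefully one should condition appropriately and use $\mathbf{E}(|R_v|\mid\mathscr G_L)\le(\mathbf{E}(R_v^2\mid\mathscr G_L))^{1/2}=\mathcal O(\sqrt{np_v})$. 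Summing: $\sum_{d(v)\le L}\mathcal O(\sqrt{np_v}/(np_v))=\sum\mathcal O(1/\sqrt{np_v})$, which is $o(1)$ once we discard (via \eqref{bound}) the vertices with $p_v\le 1/(n\,\mathrm{polylog})$. The same $|R_v|\cdot m/n$ bound, together with $\mathbf{E}\,\widehat\xi_v\le m\widehat n_v/n$ and the truncation at $c$, controls the differences of truncated expectations and truncated variances (for the variance identity one expands $\mathbf{Var}(\xi_v\mathbf 1[\xi_v\le c])-\mathbf{Var}(\widehat\xi_v\mathbf 1[\widehat\xi_v\le c])$ into a difference of second moments and a difference of squared first moments, each of which is handled as above).

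The main obstacle I anticipate is not any single estimate but bookkeeping the error terms uniformly across all four statements while keeping the right power of $\ln n$ (or of $n/\ln^2 n$): the binomial variance contributes both an $\mathcal O(np_v)$ term, which is large for the few vertices near the top of the tree, and an $\mathcal O(k^2)$ term, which is uniformly small but must be summed over $\mathcal O(b^L)$ vertices. Both are tamed by the combination of (a) choosing $\beta$ large, so that $\sum_{d(v)=L}\mathbf{E}(n_v^2)=o(n^2/\ln^k n)$ as in \eqref{error} and the concentration $\sum p_v\to 1$ is very sharp, and (b) the standing fact from Section \ref{nodes} that subtrees of size below $n/b^{kL}$ can be discarded at negligible cost. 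Once these two levers are in place the four displayed identities follow by the uniform $|R_v|$-replacement argument sketched above.
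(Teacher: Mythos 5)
Your plan follows the same route as the paper: couple $n_v$ with the multiplicative proxy $\widehat n_v$ via the binomial sandwich from Section~\ref{Subtrees}, write each of the four sums as an explicit deterministic function of the subtree sizes (this is what produces (\ref{jul1})--(\ref{jul5}) in the paper), and then show the $n_v\leftrightarrow\widehat n_v$ exchange is $o_p$ of the relevant order. The paper implements this with the Chebyshev tail bound $\mathbf{P}(|n_v-\widehat n_v|>n^{0.6}\mid\Omega_L)\le n^{-0.19}$, i.e.\ (\ref{iliada1}); you control the same error $R_v:=n_v-\widehat n_v$ through its conditional second moment and Cauchy--Schwarz. The two are interchangeable and your $L^2$ route is no harder. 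One piece of bookkeeping needs fixing, though: for the tail-probability sum you reduce the per-vertex difference to $\mathcal O(|R_v|/(m\widehat n_v))$ and then to $\sum_v 1/(m\sqrt{np_v})$, invoking (\ref{bound}) with threshold $p_v\gtrsim 1/(n\,\mathrm{polylog})$. But (\ref{bound}) only gives $p_v\gtrsim b^{-kL}$, and with your stated weaker threshold $\sum_{d(v)\le L} 1/(m\sqrt{np_v})$ is bounded by $b^L\sqrt{\mathrm{polylog}}/m$, which blows up for $\beta\ge 2$. The clean fix is to avoid dividing by $\sqrt{np_v}$ at all: since $|\ln_+a-\ln_+b|\le|a-b|$, one has the uniform per-vertex bound $|\mathbf{P}(\xi_v>x\mid\Omega_L)-\mathbf{P}(\widehat\xi_v>x\mid\mathscr G_{L})|\le |R_v|/(nx)$ regardless of $p_v$, whence the whole sum is $\mathcal O_{L^1}\big(\sqrt{b^L\cdot n\ln\ln n}/n\big)=o(1)$ with no discarding of small subtrees. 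The same device (paired with the truncation level $c$) handles the truncated-expectation and truncated-variance identities; the third identity goes through exactly as you wrote it, since there (\ref{bound}) is used with the correct $n_v<n/b^{kL}$ threshold. With that one adjustment your argument closes and matches the paper's.
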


For simplicity we sometimes use a short notation for the following sums, i.e., \begin{align}
\Phi_v:&=\left(\frac{n}{\mu^{-1}\ln{n}}
-\sum_{d(v)=L}\frac{\widehat{n_v}\ln(\frac{\widehat{n_v}}{n})}{\mu^{-1}\ln^2{n}}\right),\nonumber\\
\label{R1} R_1:&=\sum_{d(v)\leq L}\mathbf{P}\big(\widehat{\xi_v}>x\big|\mathscr G_{L}\big),\\
\label{R2}R_2:&=\sum_{d(v)\leq L}\mathbf{E}\big(\widehat{\xi_v}\mathbf{1} 
[ \widehat{\xi_v}\leq c ]\big|\mathscr G_{L} \big)-\frac{\mu^{-2}\ln^2{n}}{n}\cdot \Phi_v,\\
\label{R3}R_3:&=\sum_{d(v)\leq L}\mathbf{Var}\big(\widehat{\xi_v}\mathbf{1} [ \widehat{\xi_v}\leq c ]\big|
\mathscr G_{L}\big).
\end{align} 

\begin{Lemma}\label{lem6}
Suppose that $n\rightarrow\infty$ and choose any constant $c>0$. Then for $L=\lfloor \beta \log_b\ln n\rfloor$ and $\beta$ large enough, the following hold
 \begin{align*}&
\mathbf{E}\Big(R_1\Big)=\frac{\mu^-1}{x}+o(1)=\nu(x,\infty)+o(1),
~~\text{for~every}~x>0,\\& \mathbf{E}\Big(R_2\Big)=-\mu^{-1}\ln{n}-\mu^{-1}\ln\ln{n}+\mu^{-1}-\mu^{-1}\ln\mu^{-1}+\mu^{-1}\ln{c}-
\frac{\sigma^2-\mu^2}{2\mu^2}+o(1),\\&\mathbf{E}\Big(R_3\Big)=\mu^{-1}c+o(1).
\end{align*}
\end{Lemma}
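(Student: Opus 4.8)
The plan is to condition on $\mathscr{G}_L$ and use that, for a vertex $v$ at depth $k\le L$, one has $\widehat{\xi_v}=m\,e^{-Y_k}e^{-m\lambda_v}$, where $m=\mu^{-1}\ln n$, $\widehat{n_v}/n=e^{-Y_k}$, and $\lambda_v\sim\mathrm{Exp}(1)$ is independent of $\mathscr{G}_L$. Integrating $e^{-m\lambda}e^{-\lambda}$, its square, or a tail probability over $\{\lambda\ge a\}$ with $a=\max\{0,\tfrac1m\ln(m\,e^{-Y_k}/c)\}$, one gets explicit closed forms as functions of $Y_k$: e.g.\ $\mathbf{E}(\widehat{\xi_v}\mathbf{1}[\widehat{\xi_v}\le c]\mid Y_k)=\tfrac{m}{m+1}e^{-Y_k}e^{-(m+1)a}$, an analogous expression for the conditional second moment, and $\mathbf{P}(\widehat{\xi_v}>x\mid Y_k)=(1-e^{-(B_0-Y_k)/m})\mathbf{1}[Y_k\le B_0]$, where $B_0:=\ln(m/x)$ and $A_0:=\ln(m/c)$. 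Taking expectations and collecting the $b^k$ vertices at each depth reduces each of $\mathbf{E}(R_1),\mathbf{E}(R_2),\mathbf{E}(R_3)$ to a sum $\sum_{k=0}^{L}b^k\mathbf{E}[g(Y_k)]$; in each case the $k=0$ term is $O(A_0/m)=o(1)$ and is discarded, and the remaining $k\ge1$ part is the Stieltjes integral of $g$ against the renewal measure $dU$ from (\ref{renewal function}), up to the truncation tail $\sum_{k>L}b^k\mathbf{P}(Y_k\le A_0)$.

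Two ingredients drive the evaluation: $U(t)=(\mu^{-1}+o(1))e^{t}$ from (\ref{renewal equation3}), and $W(x)=\int_0^x e^{-t}(U(t)-\mu^{-1}e^t)\,dt\to\tfrac{\sigma^2-\mu^2}{2\mu^2}-\mu^{-1}$ from (\ref{V}). For the truncation, since $V\in[0,1]$ with $\mathbf{P}(V\in\{0,1\})=0$ we have $b\mathbf{E}(V^\theta)<b\mathbf{E}(V)=1$ for every $\theta>1$, so an exponential Chebyshev bound gives $b^k\mathbf{P}(Y_k\le A_0)\le(b\mathbf{E}(V^\theta))^k e^{\theta A_0}$; because $A_0=(1+o(1))\ln\ln n$ while $L=\lfloor\beta\log_b\ln n\rfloor$, the tail $\sum_{k>L}b^k\mathbf{P}(Y_k\le A_0)$ is $o(1)$ once $\beta$ exceeds a threshold depending on $\theta$, and this is what ``$\beta$ large enough'' means here. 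One subtlety: for $R_2$ the bare sum $\sum_k b^k\mathbf{E}(e^{-Y_k})$ diverges, so I would not integrate it; instead I write $\mathbf{1}[Y_k>A_0]=1-\mathbf{1}[Y_k\le A_0]$ and use the exact identity $\sum_{k=0}^L b^k\mathbf{E}(e^{-Y_k})=L+1$ (since $b^k\mathbf{E}(e^{-Y_k})=(b\mathbf{E}V)^k=1$). This is exactly how the depth $L$, and hence the $\mu^{-1}\ln n$ and $\mu^{-1}\ln\ln n$ terms, will enter.

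With these in hand the three statements follow from elementary integral calculus. For $R_1$: integration by parts gives $\mathbf{E}(R_1)=\tfrac1m\int_0^{B_0}e^{-(B_0-t)/m}U(t)\,dt+o(1)=\tfrac{\mu^{-1}}{m+1}(e^{B_0}-e^{-B_0/m})+o(1)=\mu^{-1}/x+o(1)$, using $e^{B_0}=m/x$ and $B_0/m\to0$. For $R_3$: on $\{Y_k\ge A_0\}$ the conditional variance is $(m\,e^{-Y_k})^2\tfrac{m^2}{(2m+1)(m+1)^2}$ and $\sum_k b^k\mathbf{E}(e^{-2Y_k}\mathbf{1}[Y_k\ge A_0])=\int_{A_0}^\infty e^{-2t}\,dU(t)+o(1)=(\mu^{-1}+o(1))e^{-A_0}=(\mu^{-1}+o(1))c/m$ (here $\sum_k(b\mathbf{E}V^2)^k<\infty$, so no truncation problem), while on $\{Y_k<A_0\}$ the conditional variance is $(1+o(1))c^2\tfrac{m^2}{(2m+1)(m+1)^2}$ and $\sum_k b^k\mathbf{P}(Y_k<A_0)=(\mu^{-1}+o(1))e^{A_0}$; each part contributes $\tfrac{\mu^{-1}c}{2}+o(1)$, so $\mathbf{E}(R_3)=\mu^{-1}c+o(1)$. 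For $R_2$: splitting the truncated-mean sum into a $\{Y_k>A_0\}$ part and a $\{Y_k<A_0\}$ part, the first equals $L-\int_0^{A_0}e^{-t}\,dU(t)+o(1)$, which by $\int_0^{A_0}e^{-t}\,dU(t)=\mu^{-1}A_0+W(A_0)+\mu^{-1}+o(1)$ and (\ref{V}) is $L-\mu^{-1}A_0-\tfrac{\sigma^2-\mu^2}{2\mu^2}+o(1)$, while the second is $\mu^{-1}+o(1)$. Finally $\mathbf{E}\big(\tfrac{\mu^{-2}\ln^2 n}{n}\Phi_v\big)=\mu^{-1}\ln n+L$, which follows from $b^L\mathbf{E}(Y_L e^{-Y_L})=L\mu$, itself a one-line consequence of $\mu=-b\mathbf{E}(V\ln V)$. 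Subtracting, the $L$'s cancel, and $A_0=\ln\mu^{-1}+\ln\ln n-\ln c$ yields exactly $\mathbf{E}(R_2)=-\mu^{-1}\ln n-\mu^{-1}\ln\ln n+\mu^{-1}-\mu^{-1}\ln\mu^{-1}+\mu^{-1}\ln c-\tfrac{\sigma^2-\mu^2}{2\mu^2}+o(1)$.

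The main obstacle is not the leading order, which comes directly from (\ref{renewal equation3}) and (\ref{V}), but the uniform control of the $O(1/m)$ corrections --- the factors $\tfrac{m}{m+1}$, $e^{\pm(A_0-Y_k)/m}$, $\tfrac{m^2}{(2m+1)(m+1)^2}$ versus $\tfrac{1}{2m}$, and the replacement of $U(t)$ by $\mu^{-1}e^t$ --- which must survive being weighted against a renewal mass of order $e^{A_0}$, i.e.\ of order $m$, and still leave only an $o(1)$ error; that estimate, together with the $\beta$-dependent truncation bound above, is the technical heart of the lemma.
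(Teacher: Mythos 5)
Your proposal is correct and follows essentially the same route as the paper's proof: in each of the three cases you reduce the expectation to a sum $\sum_{k\le L}b^k\mathbf{E}[g(Y_k)]$, identify it (up to a large-deviations tail controlled by $b\mathbf{E}(V^\theta)<1$ and $\beta$ large enough) with a Stieltjes integral against the renewal function $U$, and then evaluate it by integration by parts using $U(t)=(\mu^{-1}+o(1))e^t$ and, for $R_2$, the correction $W(A_0)\to\frac{\sigma^2-\mu^2}{2\mu^2}-\mu^{-1}$ together with the exact identity $b^k\mathbf{E}(e^{-Y_k})=1$. The paper's calculation is organized around the same $E_1/E_2$ and $V_1/V_2$ split across $\{Y_k\lessgtr A_0\}$, uses the same truncation argument in (\ref{largedev})--(\ref{largedev3}), and derives $\mathbf{E}(\Phi_v)$ from the same one-line moment computation $b^L\mathbf{E}(Y_Le^{-Y_L})=L\mu$, so the two arguments coincide up to bookkeeping.
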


Let $l:=\lfloor \frac{\log_b\ln{n}}{2}\rfloor$ and for short write  \begin{align}\label{S1} S_1:&=\sum_{l\leq d(v)\leq L}
\mathbf{P}\big(\widehat{\xi_v}>x\big|\mathscr G_{L}\big),\\\label{S2} 
S_2:&=\sum_{{l\leq d(v)\leq L}}\mathbf{E}\big(\widehat{\xi_v}\mathbf{1} 
[ \widehat{\xi_v}\leq c ]\big|\mathscr G_{L} \big)- \frac{\mu^{-2}\ln^2{n}}{n}\cdot\Phi_v,\\\label{S3} 
S_3:&=\sum_{{l\leq d(v)\leq L}}\mathbf{Var}\big(\widehat{\xi_v}\mathbf{1} [ \widehat{\xi_v}\leq c ]\big|
\mathscr G_{L}\big).
\end{align} 

\begin{Lemma}\label{lem7} Suppose that $n\rightarrow\infty$. Then for $L=\lfloor \beta \log_b\ln n\rfloor$ (where $\beta$ is large enough) and $l=\lfloor \frac{\log_b\ln{n}}{2}\rfloor$, the following limits hold  
\begin{align}\label{tjorn}&\mathbf{Var}\left(\mathbf{E}\Big(S_1\Big|\mathscr G_{l}\Big)\right)\rightarrow 0,
\\\label{lightout1}&\mathbf{Var}\left(\mathbf{E}\Big(S_2
\Big|\mathscr G_{l}\Big)\right)\rightarrow 0,\\
\label{scar1}&\mathbf{Var}\left(\mathbf{E}\Big(S_3\Big| \mathscr G_{l}\Big)\right)\rightarrow 0.\end{align}

\end{Lemma}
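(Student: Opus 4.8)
The plan is to exploit the fact that each of the sums $S_1, S_2, S_3$ involves vertices at depths between $l$ and $L$, while we condition on the much coarser $\sigma$-field $\mathscr{G}_l$ generated by the split vectors down to depth $l$. For a vertex $v$ with $d(v) = k$, $l \le k \le L$, write $\widehat{n_v} = n \prod_{r=1}^{l} W_{r,v} \cdot \prod_{r=l+1}^{k} W_{r,v}$; conditioned on $\mathscr{G}_l$, the first product is determined and the second is a product of fresh i.i.d.\ copies of $V$. Thus $\mathbf{E}(S_j \mid \mathscr{G}_l)$ is, in each case, a sum over the $b^l$ vertices $u$ at depth $l$ of a term of the form $g_j(\widehat{n_u}/n)$, where $g_j$ is a deterministic function obtained by averaging the relevant quantity ($\mathbf{P}(\widehat{\xi_v}>x\mid\cdot)$, the truncated mean, or the truncated variance) over the subtree hanging below $u$; the key structural point is that these contributions are \emph{independent} across the $b^l$ vertices $u$ at depth $l$, since they depend on disjoint collections of split vectors.

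Next I would bound the variance by the sum of the variances of the $b^l$ independent summands. For each summand I expect a bound of order $(\widehat{n_u}/n)^2 \cdot (\text{polylog in } n)$ coming from the estimates already available: the definition $\widehat{\xi_v} = (m\widehat{n_v}/n) e^{-m\lambda_v}$ with $\lambda_v \sim \mathrm{Exp}(1)$ gives $\mathbf{P}(\widehat{\xi_v} > x \mid \mathscr{G}_L) = \mathcal{O}(\widehat{n_v}/(nx) \cdot \ln n)$ and similarly controlled moments for the truncated mean and variance, while the renewal estimate (\ref{renewal equation3}) controls the sums $\sum_{d(v)=k} \widehat{n_v}$ and $\sum_{d(v)=k} \widehat{n_v}^2$ (the latter decaying geometrically in $k$ because $\mathbf{E}(V^2) < 1/b$). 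Summing over $u$ at depth $l$, the total variance is then of order $(\text{polylog } n) \cdot n^{-2} \sum_{u : d(u)=l} \widehat{n_u}^2 \le (\text{polylog } n)\,(\mathbf{E}(V^2))^l$, and since $l = \lfloor \tfrac12 \log_b \ln n\rfloor \to \infty$ with $(b\,\mathbf{E}(V^2))^l \to 0$, this tends to $0$. The same computation handles (\ref{tjorn}), (\ref{lightout1}) and (\ref{scar1}) with only cosmetic changes between the three cases; the subtracted term $\frac{\mu^{-2}\ln^2 n}{n}\Phi_v$ in $S_2$ is itself $\mathscr{G}_L$-measurable and splits along the same depth-$l$ decomposition, so it does not disturb the independence structure.

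The main obstacle I anticipate is not the independence decomposition itself but obtaining a clean variance bound for a \emph{single} depth-$l$ summand: the inner average $g_j(\widehat{n_u}/n)$ is an expectation over a whole random subtree, and one must show its variance (over the randomness below depth $l$) is controlled by $\mathcal{O}((\widehat{n_u}/n)^2 \cdot \mathrm{polylog}\, n)$ uniformly. I would get this by re-running, one level down, the kind of conditional second-moment estimates used in Lemma \ref{lem2} and Lemma \ref{lem5}: condition further on $\mathscr{G}_k$ for $l \le k \le L$, use that the relevant per-vertex quantities are bounded (probabilities by $1$, truncated variables by $c$ or $c^2$) together with the $\mathcal{O}(\widehat{n_v}/n)$-type tail bounds, and telescope over $k$ using $\sum_k b^k \mathbf{E}(V^2)^k < \infty$. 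Once that single-summand bound is in place, multiplying by $b^l$ and invoking $(b\,\mathbf{E}(V^2))^l \to 0$ closes all three limits, and the non-lattice hypothesis (A1) is only needed insofar as it underlies the renewal estimate (\ref{renewal equation3}) already cited.
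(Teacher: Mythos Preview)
Your decomposition $\mathbf{E}(S_j\mid\mathscr G_l)=\sum_{u:d(u)=l} g_j(\widehat{n_u}/n)$ is correct, but the central step---``bound the variance by the sum of the variances of the $b^l$ independent summands''---is wrong, because the summands are \emph{not} independent. The quantity $g_j(\widehat{n_u}/n)$ is a function of $\prod_{r=1}^l W_{r,u}$, i.e.\ of the split vectors along the path from the root to $u$; two vertices $u,u'$ at depth $l$ share the split vectors along the path to their last common ancestor, so the $g_j(\widehat{n_u}/n)$ are highly dependent (indeed $\sum_u \widehat{n_u}/n\equiv 1$). What \emph{is} independent across the depth-$l$ subtrees is the randomness \emph{below} level $l$---but that has already been integrated out in forming $\mathbf{E}(S_j\mid\mathscr G_l)$. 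In effect you have transplanted the mechanism that proves Lemma~\ref{lem8} (where one controls $\mathbf{E}(\mathbf{Var}(S_j\mid\mathscr G_l))$ and the below-$l$ independence is genuinely available) into Lemma~\ref{lem7}, where the remaining randomness lives entirely in $\mathscr G_l$ and has no such independence structure.

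The paper's argument is quite different and does not use any variance/covariance estimate at all: it computes $\mathbf{E}(S_j\mid\mathscr G_l)$ \emph{explicitly} via the renewal estimate (\ref{renewal equation3}) and shows it is a deterministic constant plus $o(1)$, whence its variance is trivially $o(1)$. For $S_1$ and $S_3$ one finds $\mathbf{E}(S_j\mid\mathscr G_l)=c_j\sum_{u:d(u)=l}\prod_{r=1}^l W_{r,u}+o(1)=c_j+o(1)$, using the exact identity $\sum_u\prod_r W_{r,u}=1$. For $S_2$ the calculation leaves an apparently random term $\sum_u \prod_{r} W_{r,u}\sum_r\ln W_{r,u}$, but the \emph{same} random term appears in $\mathbf{E}\big(\frac{\mu^{-2}\ln^2 n}{n}\Phi_v\,\big|\,\mathscr G_l\big)$ and cancels exactly---this cancellation is the point of subtracting $\Phi_v$ in the definition of $S_2$, and your remark that the three cases differ only cosmetically misses it. Without that cancellation $\mathbf{Var}(\mathbf{E}(S_2\mid\mathscr G_l))$ would \emph{not} tend to zero.
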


\begin{Lemma}\label{lem8} Suppose that $n\rightarrow\infty$. Then for $L=\lfloor \beta \log_b\ln n\rfloor$ (where $\beta$ is large enough) and $l=\lfloor \frac{\log_b\ln{n}}{2}\rfloor$ the following limits hold
\begin{align}\label{cia1}&\mathbf{E}\left(\mathbf{Var}\Big(S_1\Big|\mathscr G_{l}\Big)\right)\rightarrow 0,\\\label{cia2}
&\mathbf{E}\left(\mathbf{Var}\Big(S_2
\Big|\mathscr G_{l}\Big)\right)\rightarrow 0,\\
\label{scarlet2}&\mathbf{E}\left(\mathbf{Var}\Big(S_3\Big| \mathscr G_{l}\Big)\right)\rightarrow 0.
\end{align} 
\end{Lemma}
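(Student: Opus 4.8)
The plan is to prove the three limits in \eqref{cia1}, \eqref{cia2}, \eqref{scarlet2} by the same scheme: write each $S_j$ as a sum over the levels $l\le d(v)\le L$ of a ``block'' contribution, observe that conditioned on $\mathscr G_l$ the subtrees rooted at the vertices at depth $l$ are independent, and then bound the conditional variance of $S_j$ by the sum of the conditional variances of these $b^l$ independent blocks. Concretely, for a vertex $w$ with $d(w)=l$ let $S_j^{(w)}$ denote the part of $S_j$ coming from vertices $v$ in the subtree $T_w$ (so $S_j=\sum_{d(w)=l}S_j^{(w)}$); conditioned on $\mathscr G_l$ the random variables $\{S_j^{(w)}\}_{d(w)=l}$ are independent, hence
\[
\mathbf{Var}\Big(S_j\Big|\mathscr G_l\Big)=\sum_{d(w)=l}\mathbf{Var}\Big(S_j^{(w)}\Big|\mathscr G_l\Big)\le \sum_{d(w)=l}\mathbf{E}\Big((S_j^{(w)})^2\Big|\mathscr G_l\Big).
\]
Taking expectations it suffices to show $\sum_{d(w)=l}\mathbf{E}\big((S_j^{(w)})^2\big)\to 0$ for $j=1,2,3$.

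The key estimates feeding this are the subtree-size bounds from Section \ref{Subtrees}: conditioned on the split vectors, $\widehat{n_v}= n\prod_{r=1}^{d(v)}W_{r,v}$, and for $v$ in $T_w$ with $d(w)=l$ the factor $\prod_{r=1}^l W_{r,w}$ is $\mathscr G_l$-measurable with $\mathbf{E}\big(\prod_{r=1}^l W_{r,w}\big)=b^{-l}$ and $\mathbf{E}\big((\prod_{r=1}^l W_{r,w})^2\big)=(\mathbf{E}(V^2))^{l}$. Each $S_j^{(w)}$ is, up to the centering terms, of order $\widehat{n_w}/n$ times a bounded quantity (for $S_1$ the conditional tail probabilities are trivially at most $1$ and summable; for $S_2,S_3$ the truncation at $c$ makes the summands $O(\widehat{\xi_v})$ and $O(\widehat{\xi_v}^2\wedge c\widehat{\xi_v})$ respectively), so $\mathbf{E}\big((S_j^{(w)})^2\mid\mathscr G_l\big)=\mathcal O\big((\widehat{n_w}/n)^2\cdot\mathrm{poly}(\ln n)\big)$ after using the renewal estimate \eqref{renewal equation3} for the inner sum over $l\le d(v)\le L$. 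Summing over the $b^l$ vertices $w$ at depth $l$ gives
\[
\sum_{d(w)=l}\mathbf{E}\Big((S_j^{(w)})^2\Big)=\mathcal O\Big(b^l(\mathbf{E}(V^2))^{l}\cdot\mathrm{poly}(\ln n)\Big)=\mathcal O\Big((b\,\mathbf{E}(V^2))^{l}\cdot\mathrm{poly}(\ln n)\Big),
\]
and since $b\,\mathbf{E}(V^2)<b\,\mathbf{E}(V)=1$ and $l=\lfloor\tfrac12\log_b\ln n\rfloor\to\infty$, this is $(\ln n)^{\frac12\log_b(b\mathbf{E}(V^2))}\cdot\mathrm{poly}(\ln n)$, which tends to $0$ for $\beta$ (hence the power of the polynomial correction, which is controlled by $\beta$) not too large — exactly as in the binary search tree argument of \cite{holmgren}. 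One must also check that replacing the centering sums $\Phi_v$ restricted to a block is harmless; this follows because $\Phi_v$ is itself a sum of the same block form and its within-block fluctuation is absorbed in the same $\widehat n_w/n$ bookkeeping.

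The main obstacle I expect is not the independence-across-blocks step (which is structurally the same as in \cite{holmgren}) but the bookkeeping of the polynomial-in-$\ln n$ factors: one has to verify that the inner sum $\sum_{l\le d(v)\le L}$ of the relevant second moments, evaluated via the renewal function $U(t)=(\mu^{-1}+o(1))e^t$ in \eqref{renewal equation3}, contributes only a power of $\ln n$ that is beaten by the geometric gain $(b\,\mathbf{E}(V^2))^{l}$. This requires being careful that $L-l=\mathcal O(\log_b\ln n)$ and that the $e^{-m\lambda_v}$ factors, after taking expectation over the independent $\mathrm{Exp}(1)$ labels, produce only factors of order $(\ln n)^{-1}$ per vertex rather than something larger; the truncation at $c$ is what guarantees this for $S_2$ and $S_3$. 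Once these polynomial factors are pinned down, choosing $\beta$ large enough (but with the power of $\ln n$ still dominated) finishes all three limits simultaneously.
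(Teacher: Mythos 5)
Your block decomposition is a legitimate reorganization of what the paper does: conditioned on $\mathscr G_l$ the contributions from distinct depth-$l$ subtrees are independent, so $\mathbf{Var}(S_j\mid\mathscr G_l)=\sum_{d(w)=l}\mathbf{Var}(S_j^{(w)}\mid\mathscr G_l)$, and the geometric gain $\sum_{d(w)=l}\mathbf E\big((\widehat n_w/n)^2\big)=(b\,\mathbf E(V^2))^l\to 0$ is exactly the same engine as the paper's $\sum_d b^d(\mathbf E(V^2))^d$. The paper instead writes the conditional variance as a double sum of covariances over pairs $(v,w)$ and groups by the depth $d\ge l$ of the last common ancestor, using conditional independence at $\mathscr G_{d+1}$ to factor $\mathbf E\big(\mathbf P(\widehat\xi_v>x\mid\mathscr G_L)\mathbf P(\widehat\xi_w>x\mid\mathscr G_L)\big)$ and obtain $O\big((\mathbf E(V^2))^d b^{-(j-d)-(k-d)}\big)$; your version collapses this into a per-block second moment. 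Both routes meet at the same combinatorics.

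However there is a genuine gap in the step ``$\mathbf E\big((S_j^{(w)})^2\mid\mathscr G_l\big)=O\big((\widehat n_w/n)^2\cdot\mathrm{poly}(\ln n)\big)$.'' The renewal estimate \eqref{renewal equation3} controls the conditional \emph{first} moment of $S_j^{(w)}$, not its second moment; a naive worst-case bound on the number of summands in $T_w$ is $b^{L-l}\sim(\ln n)^{\beta/2}$, which would put a $\beta$-dependent power of $\ln n$ into your ``poly'' and kill the argument for large $\beta$. To get a bound that actually works one must either do the pair-sum/last-common-ancestor covariance estimate (as the paper does), or use a sharper deterministic identity such as $\sum_{v\in T_w,\,l\le d(v)\le L}\widehat n_v=\widehat n_w\,(L-l+1)$ together with the elementary bound $\mathbf P(\widehat\xi_v>x\mid\mathscr G_L)\le \widehat n_v/(nxe)$, which yields $\mathbf E\big((S_1^{(w)})^2\mid\mathscr G_l\big)\le (L-l+1)^2(\widehat n_w/n)^2/(xe)^2$ — i.e.\ a factor $(L-l+1)^2=O\big((\log_b\ln n)^2\big)$ that is \emph{sub}polynomial in $\ln n$. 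With that, the limit holds for every $\beta$; your stated concern that the argument requires ``$\beta$ not too large'' is in the wrong direction (the lemma asserts the conclusion for $\beta$ large), and the paper's own bound $C_2L^2b^{-\delta l}$ confirms there is no upper constraint on $\beta$. Finally, the centering $\Phi$ in $S_2$ cannot just be declared ``harmless'': the paper handles its within-block fluctuation separately (via the same covariance estimate plus the conditional H\"older inequality for the cross term $\prod_r W_{r,v}\ln W_{s,v}$), and a sketch should at least indicate that step, as the $\ln(\widehat n_v/n)$ factor is unbounded.
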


Before proving these lemmas we show how their use leads to the proof of Theorem \ref{thm3}.

\subsubsection{Proof of Theorem \ref{thm3}}

 Recall that $m=\mu^{-1}\ln n$.
 For any $x>0$, and $v$ with $d(v)\leq L$, we have 
\begin{align}\label{sup} \mathbf{P}\big(\xi_v>x\big|\Omega_L\big)
 &=\mathbf{P}\big(e^{-m\lambda_v}>\frac{nx}{mn_v}\big|\Omega_L\big)=\mathbf{P}\big(\lambda_v<\frac{1}{m}\ln \frac{mn_v}{nx}\big|
\Omega_L\big)\nonumber\\&=1-\exp\big(-\frac{1}{m}\ln_+\frac{mn_v}{nx}\big). \end{align} 
Thus, for every $x>0$,
\begin{equation}\label{sup2} \mathbf{P}\big(\xi_v>x\big|\Omega_L\big)\leq \frac{1}{m}\ln_+\frac{mn_v}{nx}\leq\frac{1}{m}\ln_+\frac{m}{x}\rightarrow 0,\end{equation}
which proves (i).

Recall the definitions of $R_1$, $R_2$ and $R_3$ in (\ref{R1}), (\ref{R2}) and (\ref{R3}).
Note that Lemma \ref{lem5} shows that in Theorem \ref{thm3} the left hand-sides of 
$(ii)$, $(iii)$ and $(iv)$, i.e., $\Delta_1$, $\Delta_2$ and $\Delta_3$, respectively, are equal to 
\begin{align*}\Delta_1&=R_1+o_p(1),\\\Delta_2&=R_2+\mu^{-1}\ln\ln{n}+\mu^{-1}\ln{n}-\mu^{-2}\sigma^2+o_p(1):=\widehat{R}_2+o_p(1),\\ \Delta_3&=R_3+o_p(1).
\end{align*}
Lemma \ref{lem6} shows that 
the expected values of $R_1$, $\widehat{R}_2$ and $R_3$ converge to the right hand-sides in $(ii)$, $(iii)$ and $(iv)$ of Theorem \ref{thm3}.

We complete the proof of Theorem \ref{thm3} by showing that
\begin{align}\label{lind}&\mathbf{Var}\left(R_1\right) \rightarrow 
0~~\text{for~every}~x>0,~~~
\mathbf{Var}\left(R_2\right)
 \rightarrow 0,~~\mathrm{and}~~
\mathbf{Var}\left(R_3\right) \rightarrow 0.\end{align}
Then by Chebyshev's inequality $(ii)$, $(iii)$ and $(iv)$ of Theorem \ref{thm3} follow.
Thus, it remains to show how (\ref{lind}) follows from Lemma \ref{lem6} and Lemma \ref{lem7}.

 By using (\ref{sup2}), one easily obtains
\begin{align}\label{samesum1}\sum_{d(v)\leq L}\mathbf{P}\big(\widehat{\xi_v}>x\big|\mathscr G_{L}\big)
&=\sum_{l\leq d(v)\leq L}\mathbf{P}\big(\widehat{\xi_v}>x\big|\mathscr G_{L}\big)+o(1), \\\label{samesum2}
\sum_{d(v)\leq L}\mathbf{E}\big(\widehat{\xi_v}\mathbf{1} 
[ \widehat{\xi_v}\leq c ]\big|\mathscr G_{L} \big)&=
\sum_{l\leq d(v)\leq L}\mathbf{E}\big(\widehat{\xi_v}\mathbf{1} 
[ \widehat{\xi_v}\leq c ]\big|\mathscr G_{L} \big)+o(1),
\\\label{samesum3}\sum_{d(v)\leq L}\mathbf{Var}\big(\widehat{\xi_v}\mathbf{1} [ \widehat{\xi_v}\leq c ]\big|
\mathscr G_{L}\big)&=
\sum_{l\leq d(v)\leq L}\mathbf{Var}\big(\widehat{\xi_v}\mathbf{1} [ \widehat{\xi_v}\leq c ]\big|
\mathscr G_{L}\big)+o(1). 
\end{align}
Hence,
\begin{align}\label{haj}R_1=S_1+o(1),~~R_2=S_2+o(1),~~R_3=S_3+o(1).
\end{align}

To show (\ref{lind}) we use a variance formula that is easy to establish, 
see e.g., \cite[exercise 10.17-2]{gut}, 
\begin{align}\label{condvar} \mathbf{Var}(X)=\mathbf{E}(\mathbf{Var}(X\mid\mathscr{G}))+\mathbf{Var}(\mathbf{E}(X\mid\mathscr{G})),\end{align} where
$X$ is a random variable and $\mathscr{G}$ is a sub $\sigma$-field.

Recall that  $\mathscr{G}_j$ is the $\sigma$-field generated by $\{W_{r,v},~r\in\{1,2,\dots,j\}\}$, for all vertices with $d(v)=j$.
Consequently,  by applying the variance formula in (\ref{condvar}), from Lemma \ref{lem7} and Lemma \ref{lem8} we get as $n\rightarrow \infty$
\begin{align*}&\mathbf{Var}(S_1)=
\mathbf{E}(\mathbf{Var}(S_1|\mathscr G_{l}))+\mathbf{Var}(\mathbf{E}(S_1|\mathscr G_{l}))\rightarrow 0,
\\ &\mathbf{Var}(S_2)=\mathbf{E}\left(\mathbf{Var}(S_2|\mathscr G_{l})\right)
+\mathbf{Var}\left(\mathbf{E}(S_2|\mathscr G_{l})\right)\rightarrow 0,
\\ &\mathbf{Var}(S_3)=
\mathbf{E}(\mathbf{Var}(S_3|\mathscr G_{l}))+\mathbf{Var}(\mathbf{E}(S_3|\mathscr G_{l}))\rightarrow 0,
\end{align*}
and thus (\ref{lind}) follows from (\ref{haj}). 

\textbf{We have proved Theorem \ref{thm3} by the use of the lemmas, and thus also Theorem \ref{thm}.}

\subsubsection{Proofs of the Lemmas of Theorem \ref{thm3}}
Finally we present the proofs of Lemma \ref{lem5}, Lemma \ref{lem6}, Lemma \ref{lem7} and Lemma \ref{lem8}.
\begin{proof}[Proof of Lemma \ref{lem5}]


From (\ref{binomial}) and (\ref{binomial,1})  in Section \ref{Subtrees} we get in particular that given $\mathscr{G}_L$, 
\begin{align*}
n_v &\leq \mathrm{Binomial}(n,\prod_{r=1}^{k}W_{r,v})+s_1L,\\
n_v & >\mathrm{Binomial}(n,\prod_{r=1}^{k}W_{r,v})-sL.
\end{align*}
Since a Binomial $(k,p)$ random variable has expected value $kp$ and variance  $kp(1-p)$, the Chebyshev inequality results in
\begin{align}\label{iliada1}\mathbf{P}\big(\mid n_v-n\prod_{r=1}^{k}W_{r,v}\mid > n^{0.6}|\Omega_L\big)\leq \frac{1}{n^{0.19}}.\end{align}

 This motivates the notation of $\widehat{n_v}:= n\prod_{r=1}^{k}W_{r,v}$ in (\ref{nice}).
Also recall that we write $\widehat{\xi_v}:=\frac{m\widehat{n_v}}{n}e^{-m\lambda_v}$ for $m:=\mu^{-1}\ln{n}$, and that
 $\mathscr{G}_j$ is the $\sigma$-field generated by $\{\widehat{n_v}:d(v) \leq j\}$. By using (\ref{sup}) and (\ref{sup2}) we get (compare with \cite[equation (55)]{holmgren}), 
\begin{align}\label{jul1}\sum_{d(v)\leq L}\mathbf{P}\big(\xi_v>x\big|\Omega_L\big)&=\sum_{k=1}^{L}\sum_{d(v)=k}\frac{1}{m}\ln_+\frac{m{n_v}}{nx}\Big(1+\mathcal O \big(\frac{\ln m}{m}\big)\Big)
\end{align} 
and similarly 
\begin{align}\label{jul2}\sum_{d(v)\leq L}\mathbf{P}\big(\widehat{\xi_v}>x\big|\mathscr G_{L}\big)&=\sum_{k=1}^{L}\sum_{d(v)=k}\frac{1}{m}\ln_+\frac{m\widehat{n_v}}{nx}\Big(1+\mathcal O \big(\frac{\ln m}{m}\big)\Big).
\end{align} 
By using (\ref{jul1}), (\ref{jul2}) and (\ref{iliada1}) we get
\begin{align}\label{iliada2} \sum_{d(v)\leq L}\mathbf{P}\big(\xi_v>x\big|\Omega_L\big)=\sum_{d(v)\leq L}\mathbf{P}\big(\widehat{\xi_v}>x\big|\mathscr G_{L}\big)+o_p(1).
\end{align}
One easily gets (compare with \cite[p.251]{jan1} and 
\cite[equation (61)--(62)] {holmgren}) that 
\begin{align}\label{jul3}\sum_{d(v)\leq L}\mathbf{E}\big(\xi_v\mathbf{1} 
[ \xi_v\leq c ]\big|\Omega_L\big)&= \sum_{d(v)\leq L}\frac{mn_v}{n(m+1)}e^{-\frac{m+1}{m}\ln_+({\frac{mn_v}{nc}})}
\end{align}
and similarly 
\begin{align}\label{jul4}\sum_{d(v)\leq L}\mathbf{E}\big(\widehat{\xi_v}\mathbf{1} 
[ \widehat{\xi_v}\leq c ]\big|\mathscr G_{L} \big)&=\sum_{d(v)\leq L} \frac{m\widehat{n_v}}
{n(m+1)}e^{-\frac{m+1}{m}\ln_+({\frac{m\widehat{n_v}}{nc}})}.
\end{align}
Thus, (\ref{iliada1}) implies that 
\begin{align}\label{array1}\sum_{d(v)\leq L}\mathbf{E}\big(\xi_v\mathbf{1} 
[ \xi_v\leq c ]\big|\Omega_L\big)=\sum_{d(v)\leq L}\mathbf{E}\big(\widehat{\xi_v}\mathbf{1} 
[ \widehat{\xi_v}\leq c ]\big|\mathscr G_{L} \big)+o_p(1).\end{align}

Using the bound in (\ref{bound}) for the sum of the subtree sizes with
 $n_v$ less than $\frac{n}{b^{kL}}$ (for $k$ large enough) we get the expansion
\begin{align*}
\sum_{d(v)= L}\frac{n_{v}}{\mu^{-1}\ln{n_v}}=\frac{n}{\mu^{-1}\ln{n}}-\sum_{d(v)=L}\frac{n_{v}\ln(\frac{n_{v}}{n})}{\mu^{-1}\ln^2{n}}+o(\frac{n}{\ln^2{n}}).
\end{align*}

By again using (\ref{iliada1})  (compare with \cite[equation (68)] {holmgren}) we get
\begin{align*}\sum_{d(v)= L}\frac{n_{v}}{\mu^{-1}\ln{n_v}}=\frac{n}{\mu^{-1}\ln{n}}
-\sum_{d(v)=L}\frac{\widehat{n_v}\ln(\frac{\widehat{n_v}}{n})}{\mu^{-1}\ln^2{n}}+o_p(\frac{n}{\ln^2{n}}).
\end{align*}

By using the calculations in \cite[p.251-252]{jan1} (compare with \cite[equation (70)] {holmgren}) we get
\begin{align}\label{skoj}\sum_{d(v)\leq L}\mathbf{Var}\big(\xi_v\mathbf{1} [ \xi_v\leq c ]\big|
\Omega_L\big)&=\sum_{d(v)\leq L}\frac{m^2{n_v}^2}{2mn^2}e^{-\frac{2m+1}{m}\ln_+({\frac{mn_v}{nc}})}+o(1),
\end{align}
and similarly 
\begin{align}\label{jul5}\sum_{d(v)\leq L}\mathbf{Var}\big(\widehat{\xi_v}\mathbf{1} [ \widehat{\xi_v}\leq c ]\big|
\mathscr G_{L}\big)=\sum_{d(v)\leq L}\frac{m^2{\widehat{n_v}}^2}{2mn^2}
e^{-\frac{2m+1}{m}\ln_+({\frac{m\widehat{n_v}}{nc}})}+o(1)
\end{align}
Thus, using (\ref{iliada1}) we obtain 
\begin{align}\label{skoj2}\sum_{d(v)\leq L}\mathbf{Var}\big(\xi_v\mathbf{1} [ \xi_v\leq c ]\big|
\Omega_L\big)=\sum_{d(v)\leq L}\mathbf{Var}\big(\widehat{\xi_v}\mathbf{1} [ \widehat{\xi_v}\leq c ]\big|
\mathscr G_{L}\big)+o_p(1).
\end{align}
\end{proof}


\begin{proof}[Proof of Lemma \ref{lem6}]
Recall that we write
\begin{align}\label{Y}
 Y_k=-\sum_{r=1}^{k}\ln W_r
\end{align}
and that we write 
\begin{align*}R_1=\sum_{d(v)\leq L}\mathbf{P}\big(\widehat{\xi_v}>x\big|\mathscr G_{L}\big).
\end{align*}
As in the calculations in \cite[equations (56)]{holmgren} from (\ref{jul2}) one gets 
\begin{align}\label{meas2}\mathbf{E}(R_1)=(1+o(1))\sum_{k=1}^{L} b^k \mathbf{E}\Big{(}\frac{(\ln{m}-\ln{x}-Y_k)}{m}I\{Y_k\leq \ln{m} -\ln{x}\}\Big{)}.
\end{align}
By using integration by parts we get that the sum in (\ref{meas2}) is equal to
\begin{align}\label{meas3}&\sum_{k=1}^{L} b^k\frac{1}{m}\int_{0}^{\ln{m}-\ln{x}} 
 \mathbf{P}(Y_k\leq t)dt= \frac{1}{m}\int_{0}^{\ln{m}-\ln{x}}\sum_{k=1}^{L} b^k 
 \mathbf{P}(Y_k\leq t)dt. \end{align}

Recall the definition of the renewal function $U(t):=\sum_{k=1}^{\infty}b^k\mathbf{P}(Y_k\leq t)$ 
in (\ref{renewal function}) above.
We want to show that
\begin{align}\label{ratio2}&\frac{1}{m}\int_{0}^{\ln{m}-\ln{x}}\sum_{k=L+1}^{\infty} b^k 
 \mathbf{P}(Y_k\leq t)dt=o(1).
\end{align}
To show this we use large deviations.  Choose an arbitrary $s>0$, by applying the Markov inequality and using that 
the $W_{r,v}$, $r\in\{1,\dots,k\}$, are i.i.d.\ we get 
\begin{align}\label{largedev}&
 \mathbf{P}(Y_k\leq t)= \mathbf{P}(-Y_k\geq -t)=\mathbf{P}(e^{-sY_k} \geq e^{-st})\leq \Big(\mathbf{E}(V^s)\Big)^k e^{st}.
\end{align}
 Choosing $s>1$, we get
\begin{align*}
 \mathbf{E}(V^{s})<\mathbf{E}(V)=\frac{1}{b}.
\end{align*}
 Thus, we can find $\delta>0$ such that 
\begin{align}\label{largedev2}
 \mathbf{E}(V^{s})\leq\frac{1}{b^{1+\delta}}.
\end{align}
In the definition of $L=\lfloor \beta \log_b\ln{n}\rfloor$ the constant $\beta$ can be chosen arbitrarily large. It is enough to show that $\frac{\ln m}{m}\sum_{k=L+1}^{\infty}b^k\mathbf{P}(Y_k\leq \ln{m}-\ln{x})$ is $o(1)$ for proving (\ref{ratio2}). 
By applying (\ref{largedev}) and (\ref{largedev2}) we get that
\begin{align}\label{largedev3}
\sum_{k=L+1}^{\infty} b^k \mathbf{P}(Y_k\leq \ln{m}-\ln{x})&\leq\sum_{k=L+1}^{\infty}\frac{b^k}{b^{k+\delta k}}\frac{m^{s}}{x^{s}}=\sum_{k=L+1}^{\infty}b^{-\delta k}\frac{m^{s}}{x^{s}}\nonumber \\&=\mathcal{O}\Big(m^{-\delta\beta}m^{s}\Big).
\end{align}
Thus, choosing $\beta>\frac{s-1}{\delta}$ in $L$ gives (\ref{ratio2}).
Now the solution of $U(t)$ in (\ref{renewal equation3}) gives that the quantity in (\ref{meas3}) is equal to
\begin{align}\label{meas3,2}\int_{0}^{\ln{m}-\ln{x}}U(t)dt+o(1)&=\frac{\mu^{-1}+o(1)}{m}\int_{0}^{\ln{m}-\ln{x}} e^t dt+o(1)
\nonumber\\&= \frac{\mu^{-1}}{x}+o(1)=\nu (x,\infty)+o(1).
\end{align}
Hence, $\mathbf{E}(R_1)=\nu(x,\infty)+o(1)$.

In analogy with \cite[equation (63)] {holmgren}) by using (\ref{jul4}) and (\ref{Y}) we deduce that
\begin{align}\label{array2} &\mathbf{E}\Big(\sum_{d(v)\leq L}\mathbf{E}\big(\widehat{\xi_v}\mathbf{1} 
[ \widehat{\xi_v}\leq c ]\big|\mathscr G_{L} \big)\Big):=E_1+E_2,
\end{align}
where
\begin{align}\label{ekvtill}E_1&=\mathbf{E}\sum_{d(v)\leq L}\frac{m}{m+1}e^{-Y_k}e^{-\frac{m+1}{m}(\ln{m}-\ln{c}-Y_k)}
I\{Y_k\leq \ln{m}-\ln{c}\},\nonumber\\E_2&=\mathbf{E}\sum_{d(v)\leq L}\frac{m}{m+1}e^{-Y_k}I\{Y_k>\ln{m}-\ln{c}\}.
\end{align}

By using integration by parts, applying the solution of $U(t)$ in (\ref{renewal equation3}) and using (\ref{largedev3}) we obtain that
 \begin{align}\label{1}E_1&=e^{-\frac{m+1}{m}(\ln{m}-\ln{c})}\frac{m}{m+1}\sum_{k=1}^{L}  b^k
 \int_{0}^{\ln{m}-\ln{c}}e^{\frac{t}{m}}d\mathbf{P}(Y_k\leq t)\nonumber 
\\&=e^{-\frac{m+1}{m}(\ln{m}-\ln{c})}\frac{m}{m+1}\bigg(\Big{|}\sum_{k=1}^{L}  b^ke^{\frac{t}{m}}\mathbf{P}(Y_k\leq t)\Big{|}_{0}^{\ln{m}-\ln{c}}\nonumber 
\\&~~~~~~~~~~~~~-\int_{0}^{\ln{m}-\ln{c}}\sum_{k=1}^{L}  \frac{b^k}{m}e^{\frac{t}{m}}\mathbf{P}(Y_k\leq t)dt\bigg)
\nonumber 
\\&=\mu^{-1}+o(1).\end{align}
By similar calculations as in (\ref{1}),
\begin{align}\label{array4}E_2&=\frac{m}{m+1}L-\frac{m}{m+1}\int_{0}^{\ln{m}-\ln{c}} \sum_{k=1}^{L} b^k e^{-t}d\mathbf{P}(Y_k\leq t)
\nonumber 
\\&=\frac{m}{m+1}L-\frac{m}{m+1}\bigg(\Big{|}\sum_{k=1}^{L}  b^ke^{-t}\mathbf{P}(Y_k\leq t)\Big{|}_{0}^{\ln{m}-\ln{c}}\nonumber 
\\&~~~~~~~~~~~~~+\int_{0}^{\ln{m}-\ln{c}}\sum_{k=1}^{L} b^k e^{-t}\mathbf{P}(Y_k\leq t)dt\bigg)
\nonumber \\&=L-\mu^{-1}-\frac{m}{m+1}\int_{0}^{\ln{m}-\ln{c}}\sum_{k=1}^{L} b^k e^{-t}\mathbf{P}(Y_k\leq t)dt+o(1).
\end{align}
From (\ref{largedev3}) it follows that
\begin{align*}&
 \int_{0}^{\ln{m}-\ln{c}}\sum_{k=1}^{L} b^k e^{-t}\mathbf{P}(Y_k\leq t)dt=\int_{0}^{\ln{m}-\ln{c}}e^{-t}U(t)dt+o(1)
\nonumber\\&=\int_{0}^{\ln{m}-\ln{c}}e^{-t}(U(t)-\mu^{-1}e^t)dt
+\mu^{-1}(\ln{m}-\ln{c})+o(1).
\end{align*}
Applying the solution of $W(x):=\int_{0}^{x}e^{-t}(U(t)-\mu^{-1}e^t)dt$ in (\ref{V}), from (\ref{array4}) we get that
\begin{align} \label{ekvtill2}E_2 &=L-\mu^{-1}\ln{m}+\mu^{-1}\ln{c}-\frac{\sigma^2-\mu^2}{2\mu^2}+o(1).\end{align}
Recalling (\ref{array2}) and  applying the approximations of $E_1$ in (\ref{1}) and $E_2$ in (\ref{ekvtill2}) we deduce that \begin{align*} &\mathbf{E}\Big(\sum_{d(v)\leq L}\mathbf{E}\big(\widehat{\xi_v}\mathbf{1} 
[ \widehat{\xi_v}\leq c ]\big|\mathscr G_{L} \big)\Big)=L+\mu^{-1}-\mu^{-1}\ln{m}+\mu^{-1}\ln{c}-\frac{\sigma^2-\mu^2}{2\mu^2}+o(1)\end{align*}
which is equal to
 \begin{align}\label{K}
  K:&=L+\mu^{-1}-\mu^{-1}\ln\ln{n}-\mu^{-1}\ln\mu^{-1}+\mu^{-1}\ln{c}-\frac{\sigma^2-\mu^2}{2\mu^2}+o(1).
 \end{align}

By the definition of $\widehat{n_v}$ in (\ref{nice}),
\begin{align}\label{exptot1,1}&\Phi_v:=\frac{n}{\mu^{-1}\ln{n}}
-\sum_{d(v)=L}\frac{\widehat{n_v}\ln(\frac{\widehat{n_v}}{n})}{\mu^{-1}\ln^2{n}}\nonumber\\&=\frac{n}{\mu^{-1}\ln{n}}
-\sum_{d(v)=L}\frac{n\prod_{r=1}^{L}W_{r,v}\sum_{r=1}^{L}
\ln{W_{r,v}}}{\mu^{-1}\ln^2{n}},
\end{align}
Hence, by using the definition of $\mu$ in (\ref{splitdef}) we get that
\begin{align}\label{exptot2}\mathbf{E}\Big(\Phi_v\Big)= \frac{n}{\mu^{-1}\ln{n}}+\frac{nL}{\mu^{-2}\ln^2{n}}.
\end{align}
Thus, recalling the definition of $R_2$ in (\ref{R2}) we get 
$\mathbf{E}\Big(R_2\Big)=K-\mu^{-1}\ln{n}-L$, where $K$ is defined in (\ref{K}). 

Recall that 
\begin{align*}&R_3=\sum_{d(v)\leq L}\mathbf{Var}\big(\widehat{\xi_v}\mathbf{1} [ \widehat{\xi_v}\leq c ]\big|
\mathscr G_{L}\big).
\end{align*}
 By using (\ref{jul5}) we get that
\begin{align}\label{array6}&\mathbf{E}\left(R_3\right)=\sum_{k=1}^{L}\frac{b^km}{2}\mathbf{E}\left(\prod_{r=1}^{k}W_{r,v}^2
e^{-\frac{2m+1}{m}(\ln{m}-\ln{c}-Y_k)I\{Y_k\leq \ln{m}-\ln{c}\}}\right)+o(1)\nonumber\\&=V_1+V_2+o(1),
\end{align}
where 
\begin{align}\label{V1}V_1&:=e^{-\frac{2m+1}{m}(\ln{m}-\ln{c})}\frac{m}{2}\int_{0}^{\ln{m}-\ln{c}}\sum_{k=1}^{L}b^k e^{\frac{t}{m}}d\mathbf{P}(Y_k\leq t),\nonumber\\V_2&:=\mathbf{E}\left(\sum_{k=1}^{L}\frac{b^km}{2}\prod_{r=1}^{k}W_{r,v}^2
I\{Y_k> \ln{m}-\ln{c} \}\right).
\end{align}

 By applying the solution of $U(t)$ in (\ref{renewal equation3}), integration by parts results in
\begin{align} \label{pelle3} V_2&=\int_{\ln{m}-\ln{c}}^{\infty}
\sum_{k=1}^{L}\frac{b^km}{2}e^{-2t}d\mathbf{P}(Y_k\leq t)\nonumber\\&=\frac{m}{2}\Big|e^{-2t}U(t)\Big|_{\ln{m}-\ln{c}}^{\infty}+m\int_{\ln{m}-\ln{c}}
^{\infty}e^{-2t}U(t)dt+o(1)=
\frac{\mu^{-1}c}{2}+o(1),\end{align} where we used (\ref{largedev}) (choosing $1<s<2$) and then similar calculations as in (\ref{largedev3}) to show that if we sum over all $k$ instead of $k\leq L$ the error term is just $o(1)$.

Similarly, by using  (\ref{largedev3}), integration by parts gives
\begin{align} \label{pelle2} V_1&=\frac{\mu^{-1}c}{2}+o(1).
\end{align}
\end{proof}
Hence, $\mathbf{E}(R_3)=\mu^{-1}c+o(1)$.

\begin{proof}[Proof of Lemma \ref{lem7}]



For a given vertex $v_{i}\in T$ with $d(v_i)=l$, there are at most $b^{j-l}$ choices of  $v$ at depth $j$ with ancestor 
$v_{i}$. Recall that $Y_{j,{v}}:=-\sum_{r=1}^{j}\ln{W_{r,v}}$.
 For $v$ with $d(v)=j$,  we also write
\begin{align}\label{ny}
  Z_{j-l,{v}}:=Y_{j,{v}}-Y_{l,{v_i}}=-\sum_{r=l+1}^{j}\ln{W_{r,v}}.
\end{align}

Recall from (\ref{S1}) that \begin{align*}S_1&=\sum_{l\leq d(v)\leq L}
\mathbf{P}\big(\widehat{\xi_v}>x\big|\mathscr G_{L}\big).
\end{align*}

  Using (\ref{samesum1}) and  the solution of the renewal equation $U(t)$ in (\ref{renewal equation3}) we get by similar calculations as in (\ref{meas2})--(\ref{meas3,2}),
 \begin{align}\label{array10}\mathbf{E}\Big(S_1|\mathscr G_{l}\Big)&= \sum_{i=1}^{b^l} \frac{1}{m}\int_{0}^{\ln{m}-\ln{x}-Y_{l,{v_i}}}\sum_{j=l+1}^{L} b^{j-l} \mathbf{P}(Z_{j-l,v}\leq t)dt+o(1)
 \nonumber \\&=\sum_{i=1}^{b^l}\frac{1}{m}\int_{0}^{\ln{m}-\ln{x}-Y_{l,{v_i}}}\mu ^{-1}e^t dt +o(1)\nonumber \\&=
 \sum_{i=1}^{b^l}\prod_{r=1}^{l}W_{r,v_i}\frac{\mu ^{-1}}{x}+o(1)=\frac{\mu ^{-1}}{x}+o(1).
\end{align}
Thus,
$\mathbf{Var}\Big(\mathbf{E}(S_1|\mathscr G_{l})\Big)$ is o(1),
which shows (\ref{tjorn}).

We  show that (\ref{lightout1}) is true by similar calculations as for showing (\ref{tjorn}).
Recall from (\ref{S2}) that \begin{align*}
             S_2&=\sum_{{l\leq d(v)\leq L}}\mathbf{E}\big(\widehat{\xi_v}\mathbf{1} 
[ \widehat{\xi_v}\leq c ]\big|\mathscr G_{L} \big)- \frac{\mu^{-2}\ln^2{n}}{n}\cdot \Phi_v,
            \end{align*}
where \begin{align*}&\Phi_v=\frac{n}{\mu^{-1}\ln{n}}
-\sum_{d(v)=L}\frac{n\prod_{r=1}^{L}W_{r,v}\sum_{r=1}^{L}
\ln{W_{r,v}}}{\mu^{-1}\ln^2{n}}.
\end{align*}

First, as before we let $v_{i}$ with $d(v_i)=l$ be a given vertex  so that there are at most $b^{j-l}$ choices of  $v$ 
at depth $j$ with ancestor $v_{i}$. 
Recall the notation of $Z_{j-l,v}$ in (\ref{ny}), i.e., $Y_{j,v}=Y_{l,v_i}+Z_{j-l,v}$.
By similar calculations as in (\ref{array10}) (glancing at the calculations in (\ref{array2})) we obtain
\begin{align*}\mathbf{E}\left(\sum_{l\leq d(v)\leq L}\mathbf{E}\big(\widehat{\xi_v}\mathbf{1}
 [ \widehat{\xi_v}\leq c ]\big|\mathscr G_{L} \big)\Big|\mathscr G_{l}\right) =F_1+F_2,
\end{align*}
where
\begin{multline*}F_1:=\mathbf{E}\bigg(\sum_{l \leq d(v)\leq L}
 \frac{m}{m+1}e^{-Y_{l,{v_i}}-Z_{j-l,{v}}}e^{-\frac{m+1}{m}(\ln{m}-
\ln{c}-Y_{l,{v_i}}-Z_{j-l,{v}})}\cdot\\I\{Y_{l,v_i}+Z_{j-l,v}\leq\ln{m}-\ln{c} \}\Big|\mathscr G_{l}\bigg),\\F_2:=
\mathbf{E}\left(\sum_{l \leq d(v)\leq L}\frac{m}{m+1}e^{-Y_{l,v_i}-Z_{j-l,v}}I\{Y_{l,v_i}+Z_{j-l,v}>\ln{m}-\ln{c} \}\Big|\mathscr G_{l}\right).
\end{multline*}
Then by  similar calculations as in (\ref{1}),
\begin{align}\label{F1}F_1&=e^{-\frac{m+1}{m}(\ln{m}-\ln{c})}\frac{m}{m+1}\sum_{i=1}^{b^l}\int_{0}^{\ln{m}-\ln{c}-Y_{l,{v_i}}}
 \sum_{j=l+1}^{L} b^{j-l}e^{\frac{t}{m}}d\mathbf{P}(Z_{j-l,{v}}\leq t)
\nonumber 
\\&=\sum_{i=1}^{b^l}\mu ^{-1}\prod_{r=1}^{l}W_{r,v_i} +o(1)=\mu ^{-1}+o(1).\end{align}
By similar calculations as in (\ref{1})--(\ref{ekvtill2}), we obtain
\begin{align}\label{F2}F_2&=\sum_{i=1}^{b^l}
\prod_{r=1}^{l}W_{r,v_i}\bigg((L-l)-\nonumber 
\\&~~~~~~~~\int_{0}^{\ln{m}-\ln{c}-Y_{l,{v_i}}}\frac{m}{m+1} \sum_{j=l+1}^{L} b^{j-l} 
e^{-t}d\mathbf{P}(Z_{j-l,{v}}\leq t)\bigg) +o(1)
\nonumber \\&=\sum_{i=1}^{b^l}\prod_{r=1}^{l}W_{r,v_i}\left( L-l-\mu ^{-1}\ln{m}+\mu ^{-1}\ln{c}-\frac{\sigma^2-\mu^2}{2\mu^2}-\mu ^{-1}\sum_{r=1}^{l}\ln{W_{r,v_i}}\right)+o(1)\nonumber 
\\&= L-l-\mu ^{-1}\ln{m}+\mu ^{-1}\ln{c}-\frac{\sigma^2-\mu^2}{2\mu^2}-\sum_{i=1}^{b^l}\mu ^{-1}\prod_{r=1}^{l}W_{r,v_i}\sum_{r=1}^{l}\ln{W_{r,v_i}}+o(1).
\end{align}
Thus, by applying the approximations of $F_1$ in (\ref{F1}) and $F_2$ in (\ref{F2}) we get
\begin{multline}\label{condexp6} \mathbf{E}\Big(\sum_{l \leq d(v)\leq L}\mathbf{E}\big(\widehat{\xi_v}\mathbf{1}
 [ \widehat{\xi_v}\leq c ]\big|\mathscr G_{L} \big)\Big|\mathscr G_{l}\Big)=\mu ^{-1}+L-l-\mu ^{-1}\ln{m}+\mu ^{-1}\ln{c}\\-\frac{\sigma^2-\mu^2}{2\mu^2}-\sum_{i=1}^{b^l}\mu ^{-1}\prod_{r=1}^{l}W_{r,v_i}\sum_{r=1}^{l}\ln{W_{r,v_i}}+o(1).
 \end{multline}

 Let $v_i$ be a vertex at depth $l$ and let $v$ be a vertex at depth $L$. Similarly as in (\ref{exptot1,1}) and (\ref{exptot2}) 
(compare with \cite[equations (78)--(79)]{holmgren}), we get that
\begin{align}\label{exptot4}&\mathbf{E}\Big(\Phi_v|\mathscr G_{l}\Big)
\nonumber\\ &=\frac{n}{\mu ^{-1}\ln{n}}+\frac{n(L-l)}{\mu ^{-2}\ln^2{n}}
-\sum_{i=1}^{b^l}\frac{n\prod_{r=1}^{l}W_{r,v_i}\sum_{r=1}^{l}\ln W_{r,v_i}}{\mu ^{-1}\ln^2{n}}+o(\frac{n}{\ln^2{n}}).\end{align}

From (\ref{condexp6}) and (\ref{exptot4}) we obtain that
$\mathbf{Var}\Big(\mathbf{E}(S_2|\mathscr G_{l})\Big)$ is o(1),
which shows (\ref{lightout1}).

For (\ref{scar1}) we proceed with the same method as for showing (\ref{tjorn}) and (\ref{lightout1}).
Recall from (\ref{S3}) that 
\begin{align*}
 S_3&=\sum_{{l\leq d(v)\leq L}}\mathbf{Var}\big(\widehat{\xi_v}\mathbf{1} [ \widehat{\xi_v}\leq c ]\big|
\mathscr G_{L}\big).
\end{align*}

By similar calculations as in (\ref{array6}) and (\ref{V1}) we get
\begin{align*}&\mathbf{E}\Big(\sum_{l\leq d(v)\leq L}\mathbf{Var}\big(\widehat{\xi_v}\mathbf{1} [ \widehat{\xi_v}\leq c ]\big|
\mathscr G_{L}\big)\Big|\mathscr G_{l}\Big)=I_1+I_2+o(1),
\end{align*} 
where
\begin{align*}I_1:&=e^{-\frac{2m+1}{m}(\ln{m}-\ln{c})}\sum_{i=1}^{b^l}\sum_{j=l+1}^{L}\frac{b^{j-l}m}{2}
e^{\frac{t}{m}}d\mathbf{P}(Z_{j-l,v}\leq t),\\I_2:&=\sum_{i=1}^{b^l}\mathbf{E}\Big(\sum_{j=l+1}^{L}\frac{b^{j-l}m}{2}\prod_{r=1}^{l}{W_{r,v_i}^2}\prod_{r=l+1}^{j}{W_{r,v}^2}
I\{Y_{l,v_i}+Z_{j-l,v}> \ln{m}-\ln{c} \}\Big|\mathscr G_{l}\Big).
\end{align*}

Using integration by parts  we calculate (similarly as in (\ref{pelle3}) and (\ref{pelle2}),
\begin{align*}  I_1+o(1)=I_2+o(1) =\sum_{i=1}^{b^l}
\prod_{r=1}^{l}{W_{r,v_i}} \frac{\mu^-1}{2}c+o(1)=\frac{\mu^-1}{2}c+o(1).
\end{align*}
Thus, $\mathbf{Var}\Big(\mathbf{E}(S_3\Big| \mathscr G_{l})\Big)$ is o(1), which shows (\ref{scar1}).

\end{proof}

\begin{proof}[Proof of Lemma \ref{lem8}]

Recall from (\ref{S1}) that \begin{align*}S_1&=\sum_{l\leq d(v)\leq L}
\mathbf{P}\big(\widehat{\xi_v}>x\big|\mathscr G_{L}\big).
\end{align*}

For showing (\ref{cia1}) we first note that
\begin{align*}&\mathbf{Var}\Big(\sum_{l\leq d(v)\leq L}
\mathbf{P}\big(\widehat{\xi_v}>x\big|\mathscr G_{L}\big)\Big|\mathscr G_{l}\Big) \nonumber \\&=
\sum_{\radsumma{l \leq d(v)\leq L,}{l \leq d(w)\leq L}}\mathbf{Cov}\Big(\mathbf{P}\big(\widehat{\xi_v}>x\big|\mathscr G_{L}\big),\mathbf{P}\big(\widehat{\xi_w}>x\big|\mathscr G_{L}\big)
\Big|\mathscr G_{l}\Big).
\end{align*}

To estimate these conditional covariances we can suppose that the closest ancestor $u$ for 
$v$ with $d(v)\leq L$, and $w$ with $d(w)\leq L$ is at depth $d\geq l$, since the other terms are just 0 because of independence.
For $d\geq l$, we use
\begin{align*}
&\mathbf{Cov}\Big(\mathbf{P}\big(\widehat{\xi_v}>x\big|\mathscr G_{L}\big),\mathbf{P}\big(\widehat{\xi_w}>x\big|\mathscr G_{L}\big)
\Big|\mathscr G_{l}\Big)\nonumber\\&\leq \mathbf{E}\Big(\mathbf{P}\big(\widehat{\xi_v}>x\big|\mathscr G_{L}\big)\mathbf{P}\big(\widehat{\xi_w}>x\big|\mathscr G_{L}\big)\Big|\mathscr G_{l}\Big),
\end{align*}
which implies
\begin{align}\label{fanny}&
 \mathbf{E}\Big( \mathbf{Cov}\Big(\mathbf{P}\big(\widehat{\xi_v}>x\big|\mathscr G_{L}\big),\mathbf{P}\big(\widehat{\xi_w}>x\big|\mathscr G_{L}\big)
\Big|\mathscr G_{l}\Big)\Big) \nonumber \\&\leq \mathbf{E}\Big(\mathbf{P}\big(\widehat{\xi_v}>x\big|\mathscr G_{L}\big)\mathbf{P}\big(\widehat{\xi_w}>x\big|\mathscr G_{L}\big)\Big).
\end{align}

 Denote by $(v_u,w_u)$ a general pair of vertices with closest ancestor $u$. Then  (\ref{fanny}) implies that
\begin{align}\label{array11}&\mathbf{E}\left(\mathbf{Var}\Big(S_1\Big|\mathscr G_{l}\Big)\right)\leq\nonumber\\&  \sum_{d=l}^{L}\sum_{u:d(u)=d}\sum_{(v_u,w_u)}\mathbf{E}\Big(\mathbf{P}\big(\widehat{\xi}_{v_u}>x\big|\mathscr G_{L}\big)\mathbf{P}\big(\widehat{\xi}_{w_u}>x\big|\mathscr G_{L}\big)\Big).
\end{align}

Recall that $\mathscr G_{d+1}$ is the $\sigma$-field generated by $\{\widehat{n_v}:d(v)\leq d+1\}$.
For the pair $(v_u,w_u)$ with  $d(u)=d$, conditioned on 
$\mathscr G_{d+1}$, $\mathbf{E}\left(\mathbf{P}\big(\widehat{\xi}_{v_u}>x\big|\mathscr G_{L}\big)\mid \mathscr G_{d+1}\right) $ 
and $\mathbf{E}\left(\mathbf{P}\big(\widehat{\xi}_{w_u}>x\big|\mathscr G_{L}\big) \mid \mathscr G_{d+1}\right)$ are independent. 
Thus,
\begin{align}\label{frankrike}
 &\mathbf{E}\Big(\mathbf{P}\big(\widehat{\xi}_{v_u}>x\big|\mathscr G_{L}\big)\mathbf{P}\big(\widehat{\xi}_{w_u}>x\big|\mathscr G_{L}\big)\Big| \mathscr G_{d+1}\Big)
 \nonumber\\&=\mathbf{E}\Big(\mathbf{P}\big(\widehat{\xi}_{v_u}>x\big|\mathscr G_{L}\big)\mid \mathscr G_{d+1}\Big)\cdot
\mathbf{E}\Big(\mathbf{P}\big(\widehat{\xi}_{w_u}>x\big|\mathscr G_{L}\big) \Big| \mathscr G_{d+1}\Big).
\end{align}

Let $v,w:v\wedge w=u$ denote that the vertices $v,w$ have closest ancestor $u$. 
Using (\ref{sup}) and (\ref{sup2}), by similar calculations as 
in (\ref{meas2}) for $v$, $w$ and $u$ with $d(v)=j$, $d(w)=k$ and $d(u)=d$ respectively (where $u$ is the closest ancestor 
to $v$ and $w$), we get 
 \begin{align}\label{array11,2}&
\sum_{d=l}^{L}\sum_{u:d(u)=d}\sum_{\radsumma{v,w\in T_u:}{v\wedge w=u}}\mathbf{E}\Big(\mathbf{P}\big(\widehat{\xi_{v}}>x\big|\mathscr G_{L}\big)\mathbf{P}\big(\widehat{\xi_{w}}>x\big|\mathscr G_{L}\big)\Big| \mathscr G_{d+1}\Big)
\\&\leq
\sum_{d=l}^{L}\sum_{u:d(u)=d}\sum_{j=d}^{L}\sum_{\radsumma{v\in T_u:}{d(v)=j}}
\mathbf{E}\Big(\frac{1}{m}\ln_+\frac{m\prod_{r=1}^{j} 
W_{r,v}}{x}\Big| \mathscr G_{d+1}\Big)\cdot\nonumber \\~~~~~~~~~~~~~~&\sum_{d=l}^{L}\sum_{u:d(u)=d}\sum_{k=d}^{L}\sum_{\radsumma{w\in T_u:}{d(w)=k}}\mathbf{E}\Big(\frac{1}{m}\ln_+\frac{m\prod_{r=1}^{k} W_{r,w}}{x}\Big| \mathscr G_{d+1}\Big),
\end{align}
where the inequality follows by applying (\ref{frankrike}) 
and using analogous calculations as in \cite[equations (83)--(84)]{holmgren}.
 Note that the expected value of the left hand-side of the inequality in (\ref{array11,2}) is equal to the right 
hand-side of the inequality in (\ref{array11}).
Let $u$ be the closest ancestor vertex of $v$ and $w$. Let $u_v$ be the child of $u$ that is an ancestor of $v$, respectively $u_w$ be the child of $u$ that is an ancestor of $w$.
Let $\widehat{\mathcal{W}}_{u,v}$ be the component in the split vector of vertex $u$ that corresponds to the child $u_v$ of $u$, and use the analogous notation for $\widehat{\mathcal{W}}_{u,w}$.
For a triple $(v,w,u)$ with $d(v)=j$, $d(w)=k$ and $d(u)=d$ we have  \begin{align}\label{widehat}\widehat{n_v}&=n\prod_{r=1}^{j}W_{r,v}=n\widehat{\mathcal{W}}_{u,v}\prod_{r=1}^dW_{r,u}\prod_{r=d+2}^{j} W_{r,v},
\nonumber\\\widehat{n_w}&=n\prod_{r=1}^{k}W_{r,w}=n\widehat{\mathcal{W}}_{u,w}\prod_{r=1}^dW_{r,u}\prod_{r=d+2}^{k} 
W_{r,w}.\end{align} 

For given $d(u)=d\geq l$, $d(v)=j$ and $d(w)=k$, there are at most $b^d$ choices of $u$, 
and then at most $b^{j-d}$ choices of $v$ and $b^{k-d}$ choices of $w$. 
(We can assume that $j>d+1$ and $k>d+1$, since it is easy to see that the other terms are few and the sum of them is small.)
For the child $u_v$ of $u$, $d(u_v)=d+1$  and we have  
$Y_{d+1,u_v}=-\sum_{r=1}^d\ln{W_{r,u}}-\ln{\widehat{\mathcal{W}}_{u,v}}$ (and for the child $u_w$ of $u$, $Y_{d+1,u_w}$ is defined in analogy). 
 Recall the definition of $Z_{j-l,v}:=Y_{j,v}-Y_{d+1,u_v}$ in (\ref{ny}).
 For the vertex $v$ with $d(v)=j$ we have that $Z_{j-d-1,v}:=-\sum_{r=d+2}^{j}\ln{W_{r,v}}$ 
(and the analogous notation for $Z_{k-d-1,v}$). (For simplicity we skip the vertex index in the calculations below.) 
Thus, by similar calculations as in (\ref{meas3}) and (\ref{meas3,2})  the sum in (\ref{array11,2}) is equal to
\begin{align*}
&\frac{1}{m^2}O\Bigg(\sum_{d= l}^{L} \sum_{d(u)=d}\Big{(} \sum_{j= d+2}^{L} 
\int_{0}^{\ln{m}-\ln{x}-Y_{d+1,v}} b^{j-d-1} \mathbf{P}(Z_{j-d-1}\leq t)\Big)\cdot \nonumber\\&
\Big(\sum_{k= d+2}^{L} \int_{0}^{\ln{m}-\ln{x}-{Y}_{d+1,w}} b^{k-d-1}\mathbf{P}(Z_{k-d-1}\leq t)  \Big)
\Bigg)=
\mathcal O \Bigg(\sum_{d=l}^{L}\sum_{d(u)=d}\frac{1}{x^2}\prod_{j=1}^dW_{j}^{2}\Bigg).
\end{align*}

Since $\mathbf{E}(W_{j}^{2})<\frac{1}{b}$ the expected value of this is  $o(1)$,
and thus the right hand-side of the inequality in (\ref{array11}) is $o(1)$. Hence,
$\mathbf{E}\Big(\mathbf{Var}(S_1|\mathscr G_{l})\Big)$ is o(1),
which shows (\ref{cia1}). We proceed by showing  (\ref{cia2}). 
 Recall from (\ref{S2}) that \begin{align*}
             S_2&=\sum_{{l\leq d(v)\leq L}}\mathbf{E}\big(\widehat{\xi_v}\mathbf{1} 
[ \widehat{\xi_v}\leq c ]\big|\mathscr G_{L} \big)- \frac{\mu^{-2}\ln^2{n}}{n}\cdot \Phi_v,
            \end{align*}
where \begin{align*}&\Phi_v=\frac{n}{\mu^{-1}\ln{n}}
-\sum_{d(v)=L}\frac{n\prod_{r=1}^{L}W_{r,v}\sum_{r=1}^{L}
\ln{W_{r,v}}}{\mu^{-1}\ln^2{n}}.
\end{align*}
First we consider 
\begin{align}\label{tarray2}&\mathbf{Var}\Big(\sum_{l\leq d(v)\leq L}\mathbf{E}\big(\widehat{\xi_v}\mathbf{1} 
[ \widehat{\xi_v}\leq c ]\big|\mathscr G_{L} \big)
\Big|\mathscr G_{l}\Big)\nonumber \\&=
\sum_{\radsumma{l\leq d(v)\leq L,}{l\leq d(w)\leq L}}\mathbf{Cov}\Big(\mathbf{E}\big(\widehat{\xi_v}\mathbf{1} 
[ \widehat{\xi_v}\leq c ]\big|\mathscr G_{L} \big),\mathbf{E}\big(\widehat{\xi_w}\mathbf{1} 
[ \widehat{\xi_w}\leq c ]\big|\mathscr G_{L} \big)
\Big|\mathscr G_{l}\Big).
\end{align}
 As we argued for showing (\ref{cia1}), we can suppose that the closest ancestor $u$ for $v$ and $w$ is at depth $d\geq l$.
Similar to (\ref{fanny}),
\begin{align*}
&\mathbf{E}\Big(\mathbf{Cov}\Big(\mathbf{E}\big(\widehat{\xi_v}\mathbf{1} 
[ \widehat{\xi_v}\leq c ]\big|\mathscr G_{L} \big),\mathbf{E}\big(\widehat{\xi_w}\mathbf{1} 
[ \widehat{\xi_w}\leq c ]\big|\mathscr G_{L} \big)
\Big|\mathscr G_{l}\Big)\Big)\nonumber\\&\leq \mathbf{E}\Big(\mathbf{E}\big(\widehat{\xi_v}\mathbf{1} 
[ \widehat{\xi_v}\leq c ]\big|\mathscr G_{L} \big)\mathbf{E}\big(\widehat{\xi_w}\mathbf{1} 
[ \widehat{\xi_w}\leq c ]\big|\mathscr G_{L} \big)\Big).
\end{align*}

For a vertex $v$ with $d(v)=j$,
\begin{align*}\mathbf{E}\big(\widehat{\xi_v}\mathbf{1} 
[ \widehat{\xi_v}\leq c ]\big|\mathscr G_{L} \big)=\frac{m\widehat{n_v}}{n(m+1)}e^{-\frac{m+1}{m}\ln_+({\frac{m\widehat{n_v}}{nc}})}\leq \frac{\widehat{n_v}}{n}
=\prod _{r=1}^{j}W_{r,v}.\end{align*}

Denote by $(v_u,w_u)$ a pair of vertices with closest ancestor $u$ as in  (\ref{array11}).
Consider one such pair $(v_u,w_u)$, and let $d(u)=d$, $d(v)=j$ and $d(w)=k$. 
Since 
$\mathbf{E}(W_{r,v}^{2})<\frac{1}{b^{1+\delta}}$ for some $\delta>0$ it follows that
\begin{align}\label{covexp2}\mathbf{E}\Big(\mathbf{E}\big(\widehat{\xi}_{v_u}\mathbf{1} 
[ \widehat{\xi}_{v_u}\leq c ]\big|\mathscr G_{L} \big)\mathbf{E}\big(\widehat{\xi}_{w_u}\mathbf{1} 
[ \widehat{\xi}_{w_u}\leq c ]\big|\mathscr G_{L} \big)\Big)&\leq C_1\mathbf{E}\prod _{r=1}^dW_{r,u}^{2}b^{-(j-d)-(k-d)}\nonumber\\&
\leq C_1\Big(\frac{1}{b^{1+\delta}}\Big)^db^{-(j-d)-(k-d)},
\end{align}
where $C_1$ is a constant depending on $\mathbf{E}(\widehat{W}_{u,v}\widehat{W}_{u,w})$, where $\widehat{W}_{u,v}$ and 
$\widehat{W}_{u,w}$ are the random variables that we introduced for (\ref{widehat}).
 Thus, by using (\ref{tarray2})--(\ref{covexp2}), and as in (\ref{array11,2}) letting $v,w:v\wedge w=u$ denote that the vertices $v,w$ have closest ancestor $u$  we get
\begin{align}\label{scarlet1}&\mathbf{E}\bigg(\mathbf{Var}\Big(\sum_{l \leq d(v)\leq L}
\mathbf{E}\big(\widehat{\xi_v}\mathbf{1} 
[ \widehat{\xi_v}\leq c ]\big|\mathscr G_{L} \big)\Big|\mathscr G_{l}
\Big)\bigg)\nonumber\\&\leq
\sum_{d=l}^{L}\sum_{u:d(u)=d}\sum_{\radsumma{v,w\in T_u:}{v\wedge w=u}}\mathbf{E}\Big(\mathbf{E}\big(\widehat{\xi_v}\mathbf{1} 
[ \widehat{\xi_v}\leq c ]\big|\mathscr G_{L} \big)\mathbf{E}\big(\widehat{\xi_w}\mathbf{1} 
[ \widehat{\xi_w}\leq c ]\big|\mathscr G_{L} \big)\Big)
\nonumber\\& \leq C_1 \sum_{d=l}^{L}b^{-\delta d}\sum_{j=d}^{L}b^{j-d-(j-d)}
\sum_{k=d}^{L}b^{k-d-(k-d)}\leq C_2 L^2b^{-\delta l} \rightarrow 0,
\end{align}
 where $C_2$ is a constant. (Compare with the calculations in \cite[equation (87)]{holmgren}.)
 We now show that \begin{align}
\label{covexp3}\mathbf{E}\bigg(\mathbf{Var}\Big( \frac{\mu^{-2}\ln^2{n}}{n}\cdot\Phi_v\Big|\mathscr G_{l}\Big)\bigg)\rightarrow 0. \end{align}
To show this, it is enough to show that
\begin{align*}\mathbf{E}\bigg(\mathbf{Var}\Big(\sum_{v:d(v)=L}\prod_{r=1}^{L}W_{r,v}\sum_{r=1}^{L}\ln{W_{r,v}}\Big|\mathscr G_{l}\Big)\bigg) \rightarrow 0.\end{align*}
Using (\ref{scarlet1}), we obtain for each $s\leq L$, 
\begin{align*}&\mathbf{E}\bigg(\mathbf{Var}\Big(\sum_{v:d(v)=L}\prod_{r=1}^{L}W_{r,v}\ln{W_{s,v}}\Big|\mathscr G_{l}\Big)\bigg)= \mathcal{O}\left(L^2b^{-\delta l}\right).\end{align*}
Thus, the conditional H\"{o}lder inequality, see e.g., \cite[p. 476]{gut}, yields
(\ref{covexp3}).
From (\ref{scarlet1}) and (\ref{covexp3}) and again applying the conditional H\"{o}lder 
inequality we deduce that
$\mathbf{E}\Big(\mathbf{Var}(S_2|\mathscr G_{l})\Big)$ is o(1),
which shows (\ref{cia2}). 

Recall from (\ref{S3}) that 
\begin{align*}
 S_3&=\sum_{{l\leq d(v)\leq L}}\mathbf{Var}\big(\widehat{\xi_v}\mathbf{1} [ \widehat{\xi_v}\leq c ]|
\mathscr G_{L}\big).
\end{align*}
It remains to show that $\mathbf{E}\Big(\mathbf{Var}(S_3|\mathscr G_{l})\Big)$ is o(1). 
To show this we observe that
\begin{align*}&\mathbf{Var}\big(\widehat{\xi_v}\mathbf{1} [ \widehat{\xi_v}\leq c ]\big|
\mathscr G_{L}\big)\leq
\mathbf{E}\big(\widehat{\xi_v}^2\mathbf{1} [ \widehat{\xi_v}\leq c ]\big|\mathscr G_{L}\big)\leq c
\mathbf{E}\big(\widehat{\xi_v}\mathbf{1} [ \widehat{\xi_v}\leq c ]\big|
\mathscr G_{L}\big),
\end{align*}
and thus (\ref{scarlet2}) follows from (\ref{covexp2}) by similar calculations as in (\ref{scarlet1}).

 \end{proof}

$\mathbf{Acknowledgement}:$

I gratefully acknowledge the help and support of Professor Svante Janson, for introducing me to this 
problem area and for helpful discussions and guidance.


\begin{thebibliography}{99}

\bibitem{asmussen} S. Asmussen, \emph{Applied Probability and Queues.}  John Wiley Sons, Chichester, 1987.


\bibitem{bourdon} J. Bourdon, Size and path length of Patricia tries: dynamical sources context.  \emph{Random Structures Algorithms}  19  (2001),  no. 3-4, 289--315.


\bibitem{devroye3} L. Devroye, Universal limit laws for depths in random trees. 
\emph{Siam J. Comput.} \textbf{28} (1998), no 2, 409--432.



\bibitem{devroye2} L. Devroye, Applications of Stein's method in the analysis of random binary search trees.
\emph {Stein's Method and Applications}, 47--297 (ed. Chen, Barbour) Inst. for Math. Sci. Lect. Notes Ser. \textbf{5}, World Scientific Press, Singapore,  2005.


\bibitem{drmota3}  M. Drmota, A. Iksanov, M. Moehle, U. Roesler,
A limiting distribution for the number of cuts needed to isolate  the root
of a random recursive tree. \emph{Random Struct. Alg.} \textbf{34} (2009), 319--336.




\bibitem{feller}W. Feller, \emph{An Introduction to Probability Theory and Its Applications. Vol. II.}
2nd ed., Wiley, New York, 1971.

\bibitem{jan3} J. A. Fill, S. Janson, Quicksort asymptotics.
\emph{J. Algorithms } \textbf{44} (2002), 4--28. 


\bibitem{gut2} A. Gut, \emph{Stopped Random Walks.} Springer Verlag,
 New York, Berlin, Heidelberg, 1988.


\bibitem{gut} A. Gut, \emph{ Probability: A Graduate Course},
 Springer, New York, 2005.



\bibitem{holmgren} C. Holmgren, Random records and cuttings in binary search trees. Accepted in
\emph{Combinat. Probab. Comput.} (2009).


\bibitem{holmgren2} C. Holmgren, Novel characteristics of split trees by use of renewal theory. 
\emph{ Submitted for publication}.


\bibitem{iksanov} A. Iksanov, M. Moehle,
A probabilistic proof of a weak limit law for the number of cuts needed to isolate  the root
of a random recursive tree. \emph{Electron. Commun. Prob.} \textbf{12} (2007), 28--35.



\bibitem{jan1} S. Janson, Random records and cuttings in complete binary trees. \emph{Mathematics and  Computer Science III}
Birkh\"{a}user, Basel (2004), 241--253. 

\bibitem{jan2} S. Janson,
Random cuttings and records in deterministic and random trees. \emph{Random Struct. Alg.}  \textbf{29} (2006), 139--179.

\bibitem{jan4} S. Janson, T. \L uczak, A. Rucinski,  \emph{Random Graphs.}, Wiley, New York, 2000. 

\bibitem{kallenberg} O. Kallenberg, \emph{Foundations of Modern Probability.} 2nd ed., 
Springer Verlag, Reading, Mass., 2002.







 
\bibitem {Mahmoud} H. Mahmoud, On the average internal path length of $m$-ary search trees.
 \emph{Acta Inform.} \textbf{23} (1986), 111--117. 

\bibitem {Mahmoud2} H. Mahmoud, B. Pittel, Analysis of the space of search trees under the random insertion algorithm. 
\emph{J. Algorithms} \textbf{10} (1989), no. 1, 52--75.

\bibitem {Moon} A. Meir, J. W. Moon, Cutting down random trees. 
\emph{J. Australian Math. Soc.} \textbf{11} (1970), 313--324.
 
\bibitem {Neininger} R. Neininger and L. R\"{u}schendorf, On the internal pathlength of $d$-dimensional quad trees.
 \emph{Random Struct. Alg.} \textbf{15} (1999),  no. 1, 25--41. 

\bibitem {Roesler} U. Roesler, On the analysis of stochastic divide and conquer algorithms. Average-case analysis of algorithms (Princeton, NJ, 1998),  \emph{Algorithmica} \textbf{29}  (2001),  no. 1-2, 238--261. 


\end{thebibliography}
\end{document}